\newcommand{\DOI}[1]{\href{http://doi.org/#1}{\color{purple}{\tiny\tt DOI:#1}}}
\newcommand{\arxiv}[1]{\href{http://arxiv.org/abs/#1}{{\tiny\tt arXiv:#1}}}
\tikzstyle{arrow} = [-,>=stealth]
\tikzset{node/.style={%
      draw,
      circle,
      fill,
      inner sep=0,
      outer sep=0,
      minimum size=2pt,
      node distance=30pt,
}}
\def\thickhline{%
  \noalign{\ifnum0=`}\fi\hrule \@height \thickarrayrulewidth \futurelet
   \reserved@a\@xthickhline}
\def\@xthickhline{\ifx\reserved@a\thickhline
               \vskip\doublerulesep
               \vskip-\thickarrayrulewidth
             \fi
      \ifnum0=`{\fi}}
\newlength{\thickarrayrulewidth}
\newcommand{\defi}[1]{\textbf{#1}}
\DeclareMathOperator{\Div}{Div}
\DeclareMathOperator{\avg}{avg}
\newcommand{\cref}[2]{%
  (\hyperref[#1]{\protect\NoHyper\ref{#1}\protect\endNoHyper#2})%
}
\newcommand{\FB}{\mathbf{FB}}
\newcommand{\FI}{\mathbf{FI}}
\newcommand{\FS}{\mathbf{FS}}
\newcommand{\FA}{\mathbf{FA}}
\newcommand{\VI}{\mathbf{VI}}
\newcommand{\VIC}{\mathbf{VIC}}
\newcommand{\VA}{\mathbf{VA}}
\newcommand{\OB}{\mathbf{OB}}
\newcommand{\OS}{\mathbf{OS}}
\newcommand{\VB}{\mathbf{VB}}
\newcommand{\bone}{\mathbf{1}}
\title[Brauer categories II: curried algebra]{The representation theory of Brauer categories II: curried algebra}
\date{July 10, 2022}
\author{Steven V Sam}
\address{Department of Mathematics, University of California, San Diego, CA}
\email{\href{mailto:ssam@ucsd.edu}{ssam@ucsd.edu}}
\urladdr{\url{http://math.ucsd.edu/~ssam/}}
\thanks{SS was supported by NSF grant DMS-1849173.}
\author{Andrew Snowden}
\address{Department of Mathematics, University of Michigan, Ann Arbor, MI}
\email{\href{mailto:asnowden@umich.edu}{asnowden@umich.edu}}
\urladdr{\url{http://www-personal.umich.edu/~asnowden/}}
\thanks{AS was supported by NSF grant DMS-1453893.}
\begin{document}

\begin{abstract}
A representation of $\fgl(V)=V \otimes V^*$ is a linear map $\mu \colon \fgl(V) \otimes M \to M$ satisfying a certain identity. By currying, giving a linear map $\mu$ is equivalent to giving a linear map $a \colon V \otimes M \to V \otimes M$, and one can translate the condition for $\mu$ to be a representation to a condition on $a$. This alternate formulation does not use the dual of $V$, and makes sense for any object $V$ in a tensor category $\cC$. We call such objects representations of the \emph{curried general linear algebra} on $V$. The currying process can be carried out for many algebras built out of a vector space and its dual, and we examine several cases in detail. We show that many well-known combinatorial categories are equivalent to the curried forms of familiar Lie algebras in the tensor category of linear species; for example, the titular Brauer category ``is'' the curried form of the symplectic Lie algebra. This perspective puts these categories in a new light, has some technical applications, and suggests new directions to explore.
\end{abstract}

\maketitle
\tableofcontents

\section{Introduction}

\subsection{Curried algebra}

Let $V$ be a finite-dimensional vector space. The general linear Lie algebra on $V$, denoted $\fgl(V)$, can be identified with the tensor product $V \otimes V^*$. A representation of $\fgl(V)$ on a vector space $M$ is a linear map
\begin{displaymath}
\mu \colon \fgl(V) \to \End(M)
\end{displaymath}
satisfying the equation
\begin{equation} \label{eq:gl1}
\mu([X,Y]) = [\mu(X), \mu(Y)]
\end{equation}
for all $X,Y \in \fgl(V)$. By currying (also known as tensor-hom adjunction), giving the linear map $\mu$ is equivalent to giving a linear map
\begin{displaymath}
a \colon V \otimes M \to V \otimes M.
\end{displaymath}
A natural problem, then, is to determine what condition \eqref{eq:gl1} corresponds to in terms of $a$. In Proposition~\ref{prop:glid}, we find that it amounts to the identity
\begin{equation} \label{eq:gl2}
\tau a \tau a - a \tau a \tau = a \tau - \tau a
\end{equation}
in $\End(V \otimes V \otimes M)$, where $\tau$ is the map that switches the first two tensor factors and we have written $a$ for $\id \otimes a$. We thus have two equivalent ways of viewing representations of $\fgl(V)$: as linear maps $\mu$ satisfying \eqref{eq:gl1}, or as a linear map $a$ satisfying \eqref{eq:gl2}.

The advantage of the second point of view is that it makes sense in contexts where we may not have duals. Indeed, suppose that $V$ is an object in a tensor category $\cC$. We define the \defi{curried general linear Lie algebra} on $V$, denoted $\ul{\fgl}(V)$, in a Tannakian sense: a $\ul{\fgl}(V)$-module is a map $a \colon V \otimes M \to V \otimes M$ satisfying \eqref{eq:gl2}. We emphasize that $\ul{\fgl}(V)$ is not actually an object of $\cC$: only the notion of $\ul{\fgl}(V)$-module is defined. If $V$ is dualizable then one can form the Lie algebra $\fgl(V)=V \otimes V^*$ in $\cC$, and $\ul{\fgl}(V)$-modules are equivalent to $\fgl(V)$-modules. However, one can consider $\ul{\fgl}(V)$-modules even if $V$ is not dualizable.

The above process can be applied to many algebras built out of a vector space and its dual, and we examine a number of cases in detail. Our primary motivation for developing this theory lies with its applications to representations of combinatorial categories, which we now explain.

\subsection{Representations of combinatorial categories}

Let $\fG$ a category and let $\bk$ be a commutative ring. A \defi{$\fG$-module} is a functor $\fG \to \Mod_\bk$.  Representations of categories, especially those of a combinatorial flavor, have received extensive attention in the last decade, and (for certain $\fG$'s) form the main subject of this series of papers. We now describe how curried algebras can be used to better understand these objects.

Let $\FB$ be the category of finite sets and bijections. An $\FB$-module, also known as a linear species, is simply a sequence of symmetric group representations. Given two $\FB$-modules $M$ and $N$, we define their tensor product $V \otimes W$ to be the $\FB$-module given by
\[
  (M \otimes N)(S) = \bigoplus_{S=A \amalg B} M(A) \otimes N(B).
\]
This gives the category of $\FB$-modules a symmetric monoidal structure. The motivating problem for this paper is the following: given a diagram category $\fG$, express $\fG$-modules as $\FB$-modules with extra structure, defined in terms of the tensor product. The curried perspective will help us understand this extra structure.

Here is the simplest case (which does not require currying). Following Church, Ellenberg, and Farb \cite{fimodule}, let $\FI$ be the category of finite sets and injections. An $\FI$-module is a sequence of symmetric group representations (i.e., an $\FB$-module) with some transition maps. Let $\bV$ be the \defi{standard} $\FB$-module: this is $\bk$ on sets of size~1 and~0 on all other sets. It turns out that the transition maps in $M$ can be encoded as a map of $\FB$-modules $a \colon \bV \otimes M \to M$. Not every such map $a$ defines an $\FI$-module: the key condition is that $a$ should give $M$ the structure of a $\Sym(\bV)$-module. This perspective led to a rich analysis of the category of $\FI$-modules in \cite{symc1}.

We now look at a slightly more complicated case. Consider the category $\FI\sharp$, also introduced by Church, Ellenberg, and Farb. Its objects again are finite sets, but now a morphism $S \to T$ is a pair $(S_0, i)$ where $S_0$ is a subset of $S$ and $i \colon S_0 \to T$ is an injection. An $\FI\sharp$-module $M$ is an $\FB$-module equipped with transition maps $M([n]) \to M([n+1])$, corresponding to the standard inclusion $[n] \to [n+1]$, and $M([n+1]) \to M([n])$, corresponding to the standard partial injection $[n+1] \to [n]$ defined on $[n]$. These transition maps can be encoded as maps of $\FB$-modules
\begin{displaymath}
a \colon \bV \otimes M \to M, \qquad b \colon M \to \bV \otimes M
\end{displaymath}
Not every pair $(a,b)$ defines an $\FI\sharp$-module structure on $M$: there are a few conditions that must be satisfied (see \cite{curried}). Initially, these conditions do not seem to have much meaning. This is where the curried perspective comes in: it turns out that the conditions for $(a,b)$ to define an $\FI\sharp$-module are (nearly) the conditions needed for it to define a representation of the curried Weyl algebra.

This phenomenon occurs throughout this paper. In particular, we find that representations of all of the Brauer-like categories of interest in this series of papers can be viewed as representations of the curried forms of familiar Lie algebras. For example, representations of the Brauer category itself are equivalent to representations of the curried symplectic Lie algebra $\ul{\fsp}$ in $\Mod_{\FB}$. See Figure~\ref{fig1} for a summary. The details for the examples not appearing in the current article can be found in \cite{curried}.

\begin{figure}[!h]
\begin{tabular}{ll}
\thickhline \\[-11pt]
Diagram category & Curried algebra \\[2pt]
\hline \\[-11pt]
Brauer & Symplectic Lie algebra on $\bV \oplus \bV^*$ \\
Signed Brauer & Orthogonal Lie algebra on $\bV \oplus \bV^*$ \\
Spin Brauer &  Orthosymplectic Lie superalgebra on $\bV[1] \oplus \bk \oplus \bV^*[1]$\\
Signed spin Brauer & Orthogonal Lie algebra on $\bV \oplus \bk \oplus \bV^*$\\
Periplectic Brauer & Periplectic Lie superalgebra on $\bV \oplus \bV^*[1]$ \\
Partition & Weyl Lie algebra on $\bV \oplus \bV^*$ \\
Degenerate partition & Hamiltonian Lie algebra on $\bV \oplus \bV^*$ \\
$\FI\sharp(\delta)$ & Heisenberg Lie algebra on $\bV \oplus \bV^*$\\
$\FI$ & Symmetric algebra on $\bV$ \\
$\FA$ & Witt Lie algebra on $\bV^*$ \\
$\FA^{\op}$ & Witt Lie algebra on $\bV$ \\[2pt]
\thickhline
\end{tabular}
\caption{Diagram categories and corresponding curried algebras in $\Mod_{\FB}$.} \label{fig1}
\end{figure}

\subsection{Uses}

There are a few reasons that the curried perspective on diagram categories is useful. First, it provides intuition: e.g., knowing that $\FI\sharp$-modules are modules for a Heisenberg algebra can help one guess how they should behave (though for $\FI\sharp$ itself this is not really necessary, since they are well understood). Second, it suggests new directions: for example, the curried Hamiltonian Lie algebra led us to a novel variant of the partition category that we expect to be interesting.

Finally, the curried perspective helps in applying Schur--Weyl duality as in \cite{infrank} (and this was our main motivation). For us, Schur--Weyl duality is the statement that, in characteristic~0, the category $\Mod_{\FB}$ is equivalent to the category $\Rep^{\pol}(\GL)$ of polynomial representations of the infinite general linear group. This equivalence is a tensor equivalence, so anything stated using the tensor structure on $\Mod_{\FB}$ will transfer nicely to $\Rep^{\pol}(\GL)$. Using this, we find that the Schur--Weyl dual of a module for the Brauer category belongs to parabolic category $\cO$ for an infinite rank symplectic Lie algebra. Furthermore, due to the existence of specialization functors from $\Rep^\pol(\GL)$ to $\Rep^\pol(\GL_n)$ for all finite $n$, we immediately get specialization functors from this parabolic category $\cO$ in the infinite rank case to the finite rank case. This will be the focus of the next paper in this series.

\subsection{Method}

Establishing an equivalence between a curried algebra and a diagram category is entirely elementary, but it can get somewhat complicated. We have therefore developed the following method to treat this problem systematically and keep different concerns isolated:
\begin{enumerate}
\item We first carry out the currying process. We start with a ``model algebra'' $A$ built out of a vector space and its dual, and write down exactly what an $A$-module is without using duals, by the currying procedure. We extrapolate from this a general definition of curried $A$-module in a tensor category.
\item We then specialize this notion to the tensor category $\Mod_{\FB}$, and write down exactly what a curried $A$-module is in terms of $\FB$-modules equipped with certain operations.
\item Finally, we match the above description to a diagram category; this typically involves finding a presentation for the diagram category.
\end{enumerate}
Here is how this process works for relating the symplectic Lie algebra and the Brauer category:
\begin{enumerate}
\item Let $V$ be a finite dimensional vector space. Then $V \oplus V^*$ carries a canonical symplectic form. We take $\fsp(V \oplus V^*)$ to be our model algebra. We have a natural decomposition
\begin{displaymath}
\fsp(V \oplus V^*) = \Div^2(V^*) \oplus \fgl(V) \oplus \Div^2(V).
\end{displaymath}
We thus see that giving a $\fsp(V \oplus V^*)$-module $M$ amounts to giving linear maps
\begin{displaymath}
a \colon V \otimes M \to V \otimes M, \qquad b \colon \Div^2(V) \otimes M \to M, \qquad b' \colon M \to \Sym^2(V) \otimes M.
\end{displaymath}
satisfying certain conditions, which we determine explicitly. Given an object $V$ in a tensor category, we define a module for the \defi{curried symplectic algebra} $\ul{\fsp}(V \oplus V^*)$ to be an object $M$ with maps as above satisfying the conditions we just alluded to.
\item We now examine the curried symplectic algebra in linear species. Thus suppose that $M$ is a $\ul{\fsp}(\bV \oplus \bV^*)$-module, where $\bV$ is the standard $\FB$-module. Giving the map $b$ amounts to giving natural maps $\beta \colon M(S \setminus \{i,j\}) \to M(S)$, where $S$ is a finite set and $i$ and $j$ are distinct elements of it; this is what we mean by an operation on the $\FB$-module $M$. We can similarly describe $a$ and $b'$ in terms of operations. We explicitly write down the conditions on these operations that correspond to the defining conditions of $\ul{\fsp}(\bV \oplus \bV^*)$.
\item Finally, we show that an $\FB$-module with operations as above is the same thing as a module for the Brauer category. The basic idea is that $\beta$ gives the action of a single cap, while the operation corresponding to $b'$ gives the action of a single cup. To prove this, one must show that the identities from the previous step give all the defining relations between cups and caps in the Brauer category, which we do.
\end{enumerate}

\subsection{Open problems}

One broad class of open problem is to examine the currying procedure in other situations. There are other algebras that could be interesting to curry, such as the exceptional Lie algebras (see \cite{jun} for some work on $\fg_2$ and $\fe_6$), quantum groups, or truncated Cartan algebras in positive characteristic. Similarly, there are some diagram categories that would be interesting to see from the curried perspective, such as the simplex category, the category $\OS^{\op}$ from \cite[\S 8]{grobner}, or various linear analogues of $\FI$ like $\VI$ or $\VIC$. Finally, we have mainly focused on currying in the tensor category $\Mod_{\FB}$. What about other categories? (See Remark~\ref{rmk:VA} for a comment on $\Mod_{\VB}$.)

In \S \ref{s:abs}, we define an abstract notion of curried algebra. It would be helpful if this notion were better developed. In particular, is there a way to obtain the results of this paper with less casework?

\subsection{Relation to other papers in this series}

This paper can be read independently of the other papers in this series. The following paper \cite{brauercat3} will make essential use of this paper. Many more examples of curried algebras can be found in \cite{curried}.

There is an extensive literature related to Brauer categories, see
\cite{brauercat1} for a more detailed discussion.

\subsection{Notation and conventions}

Throughout, $\bk$ denotes a fixed commutative ring. Unless otherwise stated, a tensor category is a $\bk$-linear category with a $\bk$-bilinear symmetric monoidal structure.

\subsection{Outline}

In \S \ref{s:bg}, we review linear species, and in \S \ref{s:tri}, we review the theory of triangular categories. In \S \ref{s:gl}, we look at the general linear Lie algebra from the curried perspective; this is the most important example. In \S \ref{s:symp}, \S \ref{s:witt}, and \S \ref{s:weyl}, we look at the symplectic, Witt, and Weyl Lie algebras from the curried perspective. These examples are important cases, and also representative of the currying process in general. Finally, in \S \ref{s:abs}, we make a few comments on abstract curried algebras.

\subsection*{Acknowledgments}

Some of the ideas in \S \ref{ss:fa} came out of joint discussions with Phil Tosteson; we thank him for letting us include this material here.

\section{Linear species} \label{s:bg}

\subsection{$\FB$-modules}

Let $\FB$ be the category of finite sets and bijections. An \defi{$\FB$-module} (also called a \defi{linear species}) is a functor $\FB \to \Mod_{\bk}$. A \defi{morphism} (or \defi{map}) between $\FB$-modules is a natural transformation of functors. We let $\Mod_{\FB}$ be the category of $\FB$-modules. It is a Grothendieck abelian category. Note that $\FB$-modules are equivalent to sequences $(M_n)_{n \ge 0}$ where $M_n$ is a representation of the symmetric group $\fS_n$.

Given two $\FB$-modules $M$ and $N$, we define their tensor product by
\begin{displaymath}
(M \otimes N)(S) = \bigoplus_{T \subseteq S} M(T) \otimes N(S \setminus T).
\end{displaymath}
From the sequence point of view, we have
\begin{displaymath}
(M \otimes N)_n = \bigoplus_{i+j=n} \Ind_{\fS_i \times \fS_j}^{\fS_n}(M_i \otimes N_j).
\end{displaymath}
The above tensor product gives $\Mod_{\FB}$ the structure of a symmetric monoidal category.

We define the \defi{standard $\FB$-module}, denoted $\bV$, to be the $\FB$-module that is $\bk$ on sets of cardinality~1, and 0~on all other sets. If $S$ is a finite set of cardinality $n$ then $\bV^{\otimes n}(S)$ is the $\bk$-vector space with basis given by all total orderings $(s_1,\dots,s_n)$ of the elements of $S$, and $\bV^{\otimes n}(T) = 0$ if $|T| \ne n$. There is an additional action of $\sigma \in \fS_n$ on $\bV^{\otimes n}$ given by
\[
  \sigma \cdot (s_1 ,\dots,s_n) = (s_{\sigma^{-1}(1)}, \dots, s_{\sigma^{-1}(n)}).
  \]

  The $n$th symmetric power $\Sym^n(\bV)$ is the $\fS_n$-coinvariants of $\bV^{\otimes n}$. From the above description, we see that this $\FB$-module is 1-dimensional when evaluated on a set $S$ of cardinality $n$; we write $t^S$ for the distinguished basis vector. The symmetric algebra is
  \[
    \Sym(\bV)=\bigoplus_{n \ge 0} \Sym^n(\bV).
  \]
  It admits both a multiplication map
  \[
    m \colon \Sym(\bV) \otimes \Sym(\bV) \to \Sym(\bV),
  \]
  and a comultiplication map
\begin{displaymath}
\Delta \colon \Sym(\bV) \to \Sym(\bV) \otimes \Sym(\bV).
\end{displaymath}
In terms of bases, these maps are given by
\begin{displaymath}
m(t^A \otimes t^B)=t^{A \cup B}, \qquad
\Delta(t^S) = \sum_{S = A \sqcup B} t^A \otimes t^B,
\end{displaymath}
where the second sum is over all decompositions of $S$ as a union of two disjoint subsets.

We can also consider the $n$th divided power $\Div^n(\bV)$, which is the $\fS_n$-invariants of $\bV^{\otimes n}$. Again, on a finite set $S$ of cardinality $n$, this space is 1-dimensional, and we let $t^{[S]}$ be a basis vector. There is an averaging map
\begin{displaymath}
\avg \colon \Sym^n(\bV) \to \Div^n(\bV).
\end{displaymath}
On basis vectors, this takes $t^S$ to $t^{[S]}$, and so it is an isomorphism. This isomorphism is compatible with the multiplication and comultiplications on $\Div(\bV)=\bigoplus_{n \ge 0} \Div^n(\bV)$. For this reason, we will not really need divided powers in the context of $\FB$-modules.

\begin{remark}
  This contrasts with the standard situation in vector spaces: roughly speaking, this is due to the fact that we are dealing with sets rather than multisets, so that the action of $\fS_n$ on $\bV^{\otimes n}(S)$ is free. All of the complications and differences arise in the standard situation due to the existence of monomials with exponents  greater than 1.
\end{remark}

\subsection{Operations on $\FB$-modules} \label{ss:fbstruc}

Let $S$ be a finite set. We write $S^{[n]}$ for the subset of $S^n$ consisting of tuples with distinct coordinates. We let $S^{[*]}=\coprod_{n \ge 0} S^{[n]}$. Given $\ul{x} \in S^{[n]}$, we write $S \setminus \ul{x}$ in place of $S \setminus \{x_1, \ldots, x_n\}$. We say that two elements $\ul{x} \in S^{[n]}$ and $\ul{y} \in S^{[m]}$ are \defi{disjoint} if $\{x_1,\dots,x_n\} \cap \{y_1,\dots,y_m\} = \emptyset$.

Let $M$ be an $\FB$-module. An \defi{operation} on $M$ is a rule $\phi$ that assigns to every finite set $S$ and elements $\ul{x},\ul{y} \in S^{[*]}$ a linear map
\begin{displaymath}
\phi^S_{\ul{x}, \ul{y}} \colon M(S \setminus \ul{y}) \to M(S \setminus \ul{x})
\end{displaymath}
that is natural, in the sense that if $i \colon S \to T$ is a bijection then the diagram
\begin{displaymath}
\xymatrix@C=6em{
M(S \setminus \ul{y}) \ar[r]^{\phi^S_{\ul{x},\ul{y}}} \ar[d]_i &
M(S \setminus \ul{x}) \ar[d]^i \\
N(T \setminus i(\ul{y})) \ar[r]^{\phi^T_{i(\ul{x}),i(\ul{y})}} &
N(T \setminus i(\ul{x})) }
\end{displaymath}
commutes. It is useful to picture operations diagrammatically; see Figure~\ref{fig:op}.

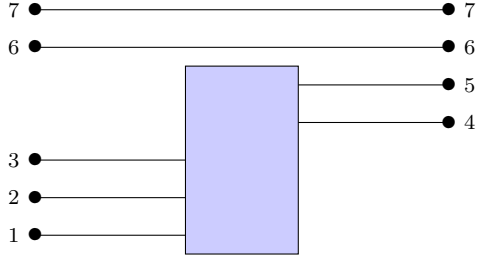
\begin{figure}
\begin{displaymath}
\begin{tikzpicture}[baseline=(current bounding box.center)]
\node at (0,.5) (a1) {$\bullet$};
\node at (0,1) (a2) {$\bullet$};
\node at (0,1.5) (a3) {$\bullet$};
\node at (0,3) (a6) {$\bullet$};
\node at (0,3.5) (a7) {$\bullet$};
\node at (5.5,2) (b4) {$\bullet$};
\node at (5.5,2.5) (b5) {$\bullet$};
\node at (5.5,3) (b6) {$\bullet$};
\node at (5.5,3.5) (b7) {$\bullet$};
\node[xshift=-8pt] at (a1.center) {\tiny 1};
\node[xshift=-8pt] at (a2.center) {\tiny 2};
\node[xshift=-8pt] at (a3.center) {\tiny 3};
\node[xshift=-8pt] at (a6.center) {\tiny 6};
\node[xshift=-8pt] at (a7.center) {\tiny 7};
\node[xshift=8pt] at (b4.center) {\tiny 4};
\node[xshift=8pt] at (b5.center) {\tiny 5};
\node[xshift=8pt] at (b6.center) {\tiny 6};
\node[xshift=8pt] at (b7.center) {\tiny 7};
\draw (a1.center) to (2,.5);
\draw (a2.center) to (2,1);
\draw (a3.center) to (2,1.5);
\draw (b4.center) to (3.5,2);
\draw (b5.center) to (3.5,2.5);
\filldraw [fill=blue!20!white, draw=black] (2,.25) rectangle (3.5,2.75);
\draw (a6.center) to (b6.center);
\draw (a7.center) to (b7.center);
\end{tikzpicture}
\end{displaymath}
\caption{Diagrammatic view of $\phi^S_{\ul{x},\ul{y}}$ where $S=\{1,\ldots,7\}$, $\ul{x}=(1,2,3)$, and $\ul{y}=(4,5)$. We picture the operation as the box that takes input on the $\ul{x}$ strands and produces output on the $\ul{y}$ strands.}
\label{fig:op}
\end{figure}

The definition of operation is quite general; in practice, our operations will be constrained in various ways. We mention a few of the important constraints here. Fix an operation $\phi$ for what follows.
\begin{itemize}
\item We say that $\phi$ is \defi{symmetric} if $\phi^S_{\ul{x},\ul{y}}$ is invariant under permutations of $\ul{x}$ and $\ul{y}$. In this case, we can simply regard $\ul{x}$ and $\ul{y}$ as subsets $A$ and $B$ of $S$, and we typically write $\phi^S_{A,B}$ instead.
\item Similarly, we say that $\phi$ is \defi{skew-symmetric} if $\phi^S_{\ul{x},\ul{y}}$ transforms under the sign character when $\ul{x}$ or $\ul{y}$ is permuted.
\item We say that $\phi$ is an \defi{$(m,n)$-operation} if $\phi^S_{\ul{x},\ul{y}}=0$ unless $\ul{x}$ has length $m$ and $\ul{y}$ has length $n$. In this case, we typically regard $\phi^S_{\ul{x},\ul{y}}$ as only defined on such tuples.
\item We say that $\phi$ is \defi{simple} if $\phi^S_{\ul{x},\ul{y}}=0$ unless $\ul{x}$ and $\ul{y}$ are disjoint. Again, in this case we typically regard $\phi^S_{\ul{x},\ul{y}}$ as only being defined on disjoint tuples.
\end{itemize}
Every operation can be expressed in terms of simple operations. We explain this in the case where $\phi$ is symmetric, as this somewhat simplifies the situation. For $n \in \bN$ define a simple operation $\phi[n]$ by $\phi[n]^S_{A,B}=\phi^{S \amalg [n]}_{A \amalg [n], B \amalg [n]}$ if $A$ and $B$ are disjoint. The naturality of $\phi$ implies that
\begin{displaymath}
\phi^S_{A,B}=\phi[n]^{S \setminus (A \cap B)}_{A \setminus B, B \setminus A}
\end{displaymath}
where $n=\#(A \cap B)$. Thus $\phi$ determines, and is determined by, the sequence of simple operations $(\phi[n])_{n \ge 0}$.

Operations are closely related to the tensor product on $\FB$-modules. For example, giving a symmetric $(m,n)$-operation $\phi$ on $M$ is equivalent to giving a map of $\FB$-modules
\begin{displaymath}
a \colon \Sym^n(\bV) \otimes M \to \Sym^m(\bV) \otimes M.
\end{displaymath}
Indeed, given a finite set $S$, a subset $B$ of $S$ of cardinality $n$, and an element $x \in M(S \setminus B)$, we can write
\begin{displaymath}
a(t^B \otimes x) = \sum_{\substack{A \subseteq S\\ \# A=m}} t^A \otimes \phi^S_{A,B}(x)
\end{displaymath}
where $\phi^S_{A,B}(x)$ belongs to $M(S \setminus A)$. This defines a map
\begin{displaymath}
\phi^S_{A,B} \colon M(S \setminus B) \to M(S \setminus A),
\end{displaymath}
and these maps define an $(m,n)$-operation $\phi$.

Let $\phi$ and $\psi$ be operations. We say that $\phi$ and $\psi$ \defi{commute} if the following condition holds: given a finite set $S$ and tuples $\ul{x}, \ul{y}, \ul{w}, \ul{z} \in S^{[*]}$ such that $\ul{x}$ and $\ul{w}$ are disjoint and $\ul{y}$ and $\ul{z}$ are disjoint, the diagram
\begin{displaymath}
\xymatrix@C=8em{
M(S \setminus (\ul{y} \cup \ul{z})) \ar[r]^{\phi^{S \setminus \ul{z}}_{\ul{x},\ul{y}}} \ar[d]_{\psi^{S \setminus \ul{y}}_{\ul{w},\ul{z}}} &
M(S \setminus (\ul{x} \cup \ul{z})) \ar[d]^{\psi^{S \setminus \ul{x}}_{\ul{w},\ul{z}}} \\
M(S \setminus (\ul{y} \cup \ul{w})) \ar[r]^{\phi^{S \setminus \ul{w}}_{\ul{x},\ul{y}}} &
M(S \setminus (\ul{x} \cup \ul{w})) }
\end{displaymath}
commutes. See Figure~\ref{fig:comm-op} for a diagrammatic interpretation of this condition. Similarly, we say that $\phi$ and $\psi$ \defi{skew-commute} if the two paths above are negatives of each other. We note that an operation need not commute with itself, and can skew-commute with itself while still being non-trivial (even in characteristic~0).

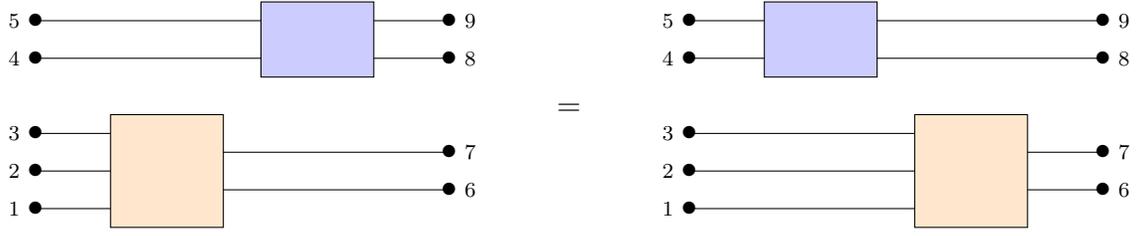
\begin{figure}
\begin{displaymath}
\begin{tikzpicture}[baseline=(current bounding box.center)]
\node at (0,.5) (a1) {$\bullet$};
\node at (0,1) (a2) {$\bullet$};
\node at (0,1.5) (a3) {$\bullet$};
\draw (a1.center) to (1,.5);
\draw (a2.center) to (1,1);
\draw (a3.center) to (1,1.5);
\filldraw [fill=orange!20!white, draw=black] (1,.25) rectangle (2.5,1.75);
\node at (0,2.5) (a4) {$\bullet$};
\node at (0,3) (a5) {$\bullet$};
\draw (a4.center) to (3.5,2.5);
\draw (a5.center) to (3.5,3);
\filldraw [fill=blue!20!white, draw=black] (3,2.25) rectangle (4.5,3.25);
\node at (5.5,.75) (b1) {$\bullet$};
\node at (5.5,1.25) (b2) {$\bullet$};
\node at (5.5,2.5) (b3) {$\bullet$};
\node at (5.5,3) (b4) {$\bullet$};
\draw (b1.center) to (2.5,.75);
\draw (b2.center) to (2.5,1.25);
\draw (b3.center) to (4.5,2.5);
\draw (b4.center) to (4.5,3);
\node[xshift=-8pt] at (a1.center) {\tiny 1};
\node[xshift=-8pt] at (a2.center) {\tiny 2};
\node[xshift=-8pt] at (a3.center) {\tiny 3};
\node[xshift=-8pt] at (a4.center) {\tiny 4};
\node[xshift=-8pt] at (a5.center) {\tiny 5};
\node[xshift=8pt] at (b1.center) {\tiny 6};
\node[xshift=8pt] at (b2.center) {\tiny 7};
\node[xshift=8pt] at (b3.center) {\tiny 8};
\node[xshift=8pt] at (b4.center) {\tiny 9};
\end{tikzpicture}
\qquad=\qquad
\begin{tikzpicture}[baseline=(current bounding box.center)]
\node at (0,.5) (a1) {$\bullet$};
\node at (0,1) (a2) {$\bullet$};
\node at (0,1.5) (a3) {$\bullet$};
\draw (a1.center) to (3,.5);
\draw (a2.center) to (3,1);
\draw (a3.center) to (3,1.5);
\filldraw [fill=orange!20!white, draw=black] (3,.25) rectangle (4.5,1.75);
\node at (0,2.5) (a4) {$\bullet$};
\node at (0,3) (a5) {$\bullet$};
\draw (a4.center) to (1,2.5);
\draw (a5.center) to (1,3);
\filldraw [fill=blue!20!white, draw=black] (1,2.25) rectangle (2.5,3.25);
\node at (5.5,.75) (b1) {$\bullet$};
\node at (5.5,1.25) (b2) {$\bullet$};
\node at (5.5,2.5) (b3) {$\bullet$};
\node at (5.5,3) (b4) {$\bullet$};
\draw (b1.center) to (4.5,.75);
\draw (b2.center) to (4.5,1.25);
\draw (b3.center) to (2.5,2.5);
\draw (b4.center) to (2.5,3);
\node[xshift=-8pt] at (a1.center) {\tiny 1};
\node[xshift=-8pt] at (a2.center) {\tiny 2};
\node[xshift=-8pt] at (a3.center) {\tiny 3};
\node[xshift=-8pt] at (a4.center) {\tiny 4};
\node[xshift=-8pt] at (a5.center) {\tiny 5};
\node[xshift=8pt] at (b1.center) {\tiny 6};
\node[xshift=8pt] at (b2.center) {\tiny 7};
\node[xshift=8pt] at (b3.center) {\tiny 8};
\node[xshift=8pt] at (b4.center) {\tiny 9};
\end{tikzpicture}
\end{displaymath}
\caption{Commuting operations.}
\label{fig:comm-op}
\end{figure}

\section{Triangular categories} \label{s:tri}

Most of the diagram categories considered in this paper are \emph{triangular categories}, a notion introduced in \cite{brauercat1} (and similar to the notion of semi-infinite highest weight category in the sense of \cite{BrundanStroppel}). We will use this structure to aid us in establishing presentations for these categories. We recall the definition here and establish a few properties of these categories that will be useful.

Let $\fG$ be a $\bk$-linear category satisfying the following condition:
\begin{itemize}
\item[(T0)] The category $\fG$ is essentially small, and all $\Hom$ spaces are finite dimensional.
\end{itemize}
We denote the set of isomorphism classes in $\fG$ by $\vert \fG \vert$. Recall that a subcategory is \defi{wide} if it contains all objects.

\begin{definition}
A \defi{triangular structure} on $\fG$ is a pair $(\fU, \fD)$ of wide subcategories of $\fG$ such that the following axioms hold:
\begin{itemize}
\item[(T1)] We have $\End_{\fU}(x)=\End_{\fD}(x)$ for all objects $x$.
\item[(T2)] There exists a partial order $\le$ on $\vert \fG \vert$ such that:
\begin{enumerate}
\item For all $x \in \vert \fG \vert$ there are only finitely many $y \in \vert \fG \vert$ with $y \le x$.
\item The category $\fU$ is upwards with respect to $\le$, i.e., if there exists a non-zero morphism $x \to y$, then $x \le y$.
\item The category $\fD$ is downwards with respect to $\le$, i.e., if there exists a non-zero morphism $x \to y$, then $y \le x$.
\end{enumerate}
\item[(T3)] For all $x,y \in \fG$, the natural map
\begin{displaymath}
\bigoplus_{y \in \vert \fG \vert} \Hom_{\fU}(y,z) \otimes_{\End_{\fU}(y)} \Hom_{\fD}(x,y) \to \Hom_{\fG}(x,z)
\end{displaymath}
is an isomorphism.
\end{itemize}
A \defi{triangular category} is a $\bk$-linear category satisfying (T0) equipped with a triangular structure.
\end{definition}

\begin{remark}
In \cite{brauercat1}, we required the rings $\End_{\fU}(x)$ to be semisimple; we do not make that assumption here.
\end{remark}

Fix a triangular category $\fG$, and set
\[
  \fM = \fU \cap \fD.
\]
Note that all non-zero morphisms in $\fM$ are between isomorphic objects; in our applications $\fM$ will almost always be the $\bk$-linearization of $\FB$. Recall that if $\fC$ is a $\bk$-linear category then a \defi{$\fC$-module} is a $\bk$-linear functor $\fC \to \Mod_{\bk}$. We are interested in modules over the categories $\fG$, $\fU$, $\fD$, and $\fM$. Suppose that $\fC$ is one of these categories. Then $\fC$ has the same objects of $\fM$ and contains $\fM$. Thus a $\fC$-module can be regarded as an $\fM$-module equipped with extra structure; we refer to this extra structure as a \defi{$\fC$-structure}. By (T3) it follows that a $\fG$-structure on an $\fM$-module is determined by its restrictions to $\fD$ and $\fU$. We say that a $\fD$-structure and a $\fU$-structure on an $\fM$-module are \defi{compatible} if they come from a $\fG$-structure. We now investigate compatibility in more detail.

Fix an $\fM$-module $M$ equipped with a $\fD$-structure and a $\fU$-structure. Let $\alpha$ be a morphism in $\fG$.  Write
\begin{displaymath}
\alpha = \sum_{i=1}^n \phi_i \circ \psi_i
\end{displaymath}
with $\phi_i$ in $\fU$ and $\psi_i$ in $\fD$, which is possible by (T3). We then define
\begin{displaymath}
\alpha_* = \sum_{i=1}^n (\phi_i)_* (\psi_i)_*.
\end{displaymath}
This is well-defined by (T3) and the fact that the $\fU$- and $\fD$-structures agree on $\fM$. Suppose that $\beta$ is a second morphism such that $\beta \circ \alpha$ is defined. We say that $(\alpha, \beta)$ is \defi{compatible} if $(\beta \circ \alpha)_*=\beta_* \alpha_*$. We note that $(\alpha, \beta)$ is automatically compatible if $\beta$ belongs to $\fU$, or if $\alpha$ belongs to $\fD$. Clearly, the $\fU$- and $\fD$-structures on $M$ are compatible if and only if $(\alpha, \beta)$ is compatible for all $\alpha$, $\beta$ such that $\beta \circ \alpha$ is defined. In fact, one has the following:

\begin{proposition}
The $\fU$- and $\fD$-structures on $M$ are compatible if and only if all pairs $(\phi, \psi)$ with $\phi$ in $\fU$ and $\psi$ in $\fD$ are compatible.
\end{proposition}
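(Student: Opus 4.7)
The forward direction is a tautology: compatibility of every composable pair in $\fG$ specializes to compatibility of mixed pairs $(\phi,\psi)$ with $\phi \in \fU$ and $\psi \in \fD$.

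For the converse, the strategy is to reduce compatibility of an arbitrary pair $(\alpha,\beta)$ to the mixed case by applying (T3). First I would decompose
\[
  \alpha = \sum_i \phi_i \circ \psi_i, \qquad \beta = \sum_j \phi_j' \circ \psi_j',
\]
with $\phi_i, \phi_j' \in \fU$ and $\psi_i, \psi_j' \in \fD$, giving
\[
  \beta \circ \alpha = \sum_{i,j} \phi_j' \circ (\psi_j' \circ \phi_i) \circ \psi_i.
\]
Next, for each pair $(i,j)$, I would apply (T3) once more to the middle factor to write $\psi_j' \circ \phi_i = \sum_k \phi_{ijk}'' \circ \psi_{ijk}''$. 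Substituting yields a legitimate $(\fU,\fD)$-decomposition
\[
  \beta \circ \alpha = \sum_{i,j,k} (\phi_j' \circ \phi_{ijk}'') \circ (\psi_{ijk}'' \circ \psi_i),
\]
from which the definition of $(-)_*$, together with functoriality of the $\fU$- and $\fD$-structures as subcategory actions, gives
\[
  (\beta \circ \alpha)_* = \sum_{i,j} (\phi_j')_* \Bigl( \sum_k (\phi_{ijk}'')_* (\psi_{ijk}'')_* \Bigr) (\psi_i)_*.
\]
By definition of $(-)_*$ on $\psi_j' \circ \phi_i$, the inner sum computes $(\psi_j' \circ \phi_i)_*$, and the mixed-pair hypothesis equates this with $(\psi_j')_* (\phi_i)_*$. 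The outer expression then collapses to $\beta_* \alpha_*$, which is exactly what compatibility requires. This fits with the observation just made in the text, namely that $(\gamma, \delta)$ is automatically compatible when $\gamma \in \fD$ or $\delta \in \fU$: those cases are the trivial sub-instances of the computation above where no middle re-decomposition is needed.

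The main obstacle is not conceptual but bookkeeping: one must be careful that $(-)_*$ is well defined on general morphisms, since the (T3) decomposition is only unique modulo the tensor relation $\phi \gamma \otimes \psi = \phi \otimes \gamma \psi$ for $\gamma \in \End_\fU(y) = \End_\fD(y)$ — which is precisely where (T1) enters the setup. Since the paper has already established well-definedness of $(-)_*$ globally, and since functoriality within $\fU$ and within $\fD$ is part of the given structures, no new ingredient beyond the decomposition above should be required.
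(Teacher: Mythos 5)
Your argument is correct and matches the paper's proof essentially line for line: both decompose $\alpha$ and $\beta$ via (T3), re-decompose the middle factor $\psi_j' \circ \phi_i$, pass to $(-)_*$ using functoriality within $\fU$ and $\fD$, and invoke the mixed-pair hypothesis to collapse the inner sum. Your closing remark about well-definedness of $(-)_*$ is also the same caveat the paper addresses immediately before the proposition.
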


\begin{proof}
Let $\alpha$ and $\beta$ be morphisms in $\fG$ such that $\beta \circ \alpha$ is defined. Write
\begin{displaymath}
\alpha = \sum_{i=1}^n \phi_i \circ \psi_i, \qquad
\beta = \sum_{j=1}^m \phi_j' \circ \psi_j'
\end{displaymath}
with $\phi_i$ and $\phi'_j$ in $\fU$ and $\psi_i$ and $\psi'_j$ in $\fD$. For each $(i,j)$, write
\begin{displaymath}
\psi_j' \circ \phi_i = \sum_{k=1}^{N_{i,j}} \phi''_{i,j,k} \circ \psi''_{i,j,k}
\end{displaymath}
where, again, the $\phi''$ belong to $\fU$ and the $\psi''$ belong to $\fD$. Then
\begin{displaymath}
\beta \circ \alpha= \sum_{i=1}^n \sum_{j=1}^m \sum_{k=1}^{N_{i,j}} (\phi'_j \circ \phi''_{i,j,k}) \circ (\psi''_{i,j,k} \circ \psi_i).
\end{displaymath}
We thus have
\begin{align*}
(\beta \circ \alpha)_*
&= \sum_{i,j,k} (\phi'_j)_* (\phi''_{i,j,k})_* (\psi''_{i,j,k})_* (\psi_i)_* \\
&= \sum_{i,j} (\phi'_j)_* (\psi'_j)_* (\phi_i)_* (\psi_i)_* \\
&= \beta_* \alpha_*,
\end{align*}
where in the first step we used the definition of $(\beta \circ \alpha)_*$, in the second step we used the compatibility of $(\phi_i, \psi'_j)$ for all $i$ and $j$, and in the third step we used the definitions of $\alpha_*$ and $\beta_*$. Thus $(\alpha, \beta)$ is compatible, and the proof is complete.
\end{proof}

We now give a refinement of the above criterion. Let $\fC$ be a $\bk$-linear category. We say a class of morphisms $\cC$ in $\fC$ \defi{generates} if every morphism in $\fC$ can be expressed as a $\bk$-linear combination of finite compositions of morphisms in $\cC$.

\begin{proposition} \label{prop:tri-comp}
Let $\cU$ generate $\fU$ and let $\cD$ generate $\fD$. Suppose that $(\phi, \psi)$ are compatible whenever $\phi \in \cU$ and $\psi \in \cD$, and $\psi \circ \phi$ is defined. Then the $\fU$- and $\fD$-structures are compatible.
\end{proposition}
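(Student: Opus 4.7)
By the preceding proposition, it suffices to prove that $(\phi, \psi)$ is compatible for every $\phi \in \fU$ and $\psi \in \fD$ with $\psi \circ \phi$ defined. Bilinearity of $(\cdot)_*$ in each slot, combined with the generation hypothesis, reduces this to the case where $\phi = \phi_k \circ \cdots \circ \phi_1$ with $\phi_i \in \cU$ and $\psi = \psi_l \circ \cdots \circ \psi_1$ with $\psi_j \in \cD$.

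I would then proceed by well-founded induction on the peak object $z = \operatorname{cod}(\phi) = \operatorname{dom}(\psi)$, with well-foundedness going down given by (T2)(a); an inner induction on $k + l$ handles situations in which the peak fails to drop. The base case, when $z$ is minimal, forces $\phi$ and $\psi$ to be endomorphisms lying in $\fM$ by (T1), so compatibility is automatic from functoriality of the $\fM$-structure. For the inductive step at non-minimal $z$ with both $\phi_k$ and $\psi_1$ non-endomorphisms, I apply the generator hypothesis to the peak pair $(\phi_k, \psi_1)$: this yields the (T3) straightening $\psi_1 \circ \phi_k = \sum_i \phi'_i \circ \psi'_i$ together with $(\psi_1 \phi_k)_* = \psi_{1,*} \phi_{k,*}$. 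Substituting into $\psi \circ \phi$ gives $\sum_i A_i \circ B_i$, where $A_i = (\psi_l \circ \cdots \circ \psi_2) \circ \phi'_i$ is an up-then-down composition with peak $w_1 < z$ and $B_i = \psi'_i \circ (\phi_{k-1} \circ \cdots \circ \phi_1)$ has peak $z_{k-1} < z$. The main inductive hypothesis applies to both $A_i$ and $B_i$, and the gluing $(A_i \circ B_i)_* = A_{i,*} B_{i,*}$ follows from the preceding proposition's argument applied locally, which invokes only compatibilities of secondary $\fU \cdot \fD$ pairs at peak $w'_i < z$. Summing over $i$ and using the straightening identity yields $(\psi \circ \phi)_* = \psi_* \phi_*$.

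The main obstacle is the endomorphism case, where $\phi_k$ or $\psi_1$ lies in $\fM$ and the peak fails to drop after straightening. By (T1), such an endomorphism may be transported freely between the $\fU$- and $\fD$-sides using the agreement of the two structures on $\fM$; doing so reduces the effective total length $k + l$ while leaving $z$ fixed, and the inner induction closes this sub-case. A minor verification is that $(\cdot)_*$ is $\bk$-linear on $\fG$, which follows directly from its definition via (T3) and ensures the substitutions and summations above are valid.
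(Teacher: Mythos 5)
Your proof follows essentially the same strategy as the paper's: reduce to the case $\phi\in\fU$, $\psi\in\fD$ via the preceding proposition, induct on the peak object using (T2a), straighten the generator pair sitting at the peak via the hypothesis, and apply the inductive hypothesis to the resulting sub-compositions (whose peaks have strictly dropped), with a local gluing argument to reassemble. One small imprecision: the claim that transporting an $\fM$-generator across the peak ``reduces the effective total length $k+l$'' is not literally true as stated, since absorbing $\phi_k\in\fM$ into $\psi_1$ and re-expressing the result in terms of $\cD$-generators can increase the $\cD$-side count; the clean fix (implicit in the paper's ``we can thus write $\phi=\phi^1\phi^2$ with $\phi^1\in\cU\setminus\fM$'') is to first enlarge $\cU$ and $\cD$ so that they are stable under pre/post-composition by $\fM$-morphisms---which is harmless, as compatibility with $\fM$-morphisms is automatic---after which the absorption genuinely decreases the factor count and the inner induction closes.
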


\begin{proof}
We write $s(\phi)$ and $t(\phi)$ for the source and target of a morphism $\phi$. For $x \in \vert \fG \vert$, consider the following statements:
\begin{itemize}
\item[$S(x)$:] Let $\alpha$ and $\beta$ be morphisms in $\fG$ such that $\beta \circ \alpha$ is defined and $t(\alpha) =x$. Then $(\alpha, \beta)$ is compatible.
\item[$S_{\le x}$:] Statement $S(y)$ holds for all $y \le x$.
\item[$S_{<x}$:] Statement $S(y)$ holds for all $y<x$.
\end{itemize}
Clearly, it suffices to prove $S(x)$ for all $x$. We prove that $S_{<x}$ implies $S_{\le x}$ for all $x$. This implies $S(x)$ for all $x$ by an inductive argument, which is enabled by the condition (T2a). Thus let $x \in \vert \fG \vert$ be given and suppose $S_{<x}$ holds.

First suppose that $\phi$ is a morphism in $\fU$ and $\psi$ is a morphism in $\fD$ such that $\psi \circ \phi$ is defined and $t(\phi) \le x$. We show that $(\phi, \psi)$ is compatible. If $\phi$ or $\psi$ belongs to $\fM$, the statement is trivial, so assume this is not the case. We can express $\phi$ as a linear combination of compositions of morphisms in $\cU$. Since compatibility interacts well with linear combinations, it suffices to treat the case where $\phi$ is a composition of morphisms in $\cU$. We can thus write $\phi=\phi^1 \phi^2$ where $\phi^1$ belongs to $\cU$ but not to $\fM$, and $\phi^2$ belongs to $\fU$. Similarly, we can assume $\psi=\psi^2 \psi^1$ where $\psi^1$ belongs to $\cD$ but not to $\fM$, and $\psi^2$ belongs to $\fD$. Write
\begin{displaymath}
\psi^1 \circ \phi^1 = \sum_{i=1}^n \phi_i^3 \circ \psi_i^3
\end{displaymath}
with $\phi_i^3$ in $\fU$ and $\psi_i^3$ in $\fD$. Since $(\phi^1, \psi^1)$ is compatible by assumption, we have
\begin{displaymath}
\psi^1_* \phi^1_* = (\psi^1 \circ \phi^1)_* = \sum_{i=1}^n (\phi_i^3)_* (\psi_i^3)_*.
\end{displaymath}
We thus have
\begin{align*}
\psi^2_* \psi^1_* \phi^1_* \phi^2_*
&= \sum_{i=1}^n \psi^2_* (\phi_i^3)_*(\psi_i^3)_* \phi^2_*
= \sum_{i=1}^n \psi^2_* (\phi_i^3)_* (\psi_i^3 \circ \phi^2)_* \\
&= \sum_{i=1}^n \psi^2_* (\phi_i^3 \circ \psi_i^3 \circ \phi^2)_*
= \sum_{i=1}^n (\psi^2 \circ \phi_i^3 \circ \psi_i^3 \circ \phi^2)_*
= (\psi^2 \circ \psi^1 \circ \phi^1 \circ \phi^2)_*
\end{align*}
where we have repeatedly used $S_{<x}$. Note that
\begin{align*}
t(\phi^2)=s(\phi^1)<t(\phi^1) &\le x \\
t(\psi_i^3) \le s(\psi_i^3)=t(\phi^2) &<x \\
t(\phi_i^3)=t(\psi^1)<s(\psi^1)=t(\phi^1) &\le x
\end{align*}
which justifies applying $S_{<x}$ in each case. We thus see that $\psi_* \phi_*=(\psi \circ \phi)_*$, and so $(\phi, \psi)$ is compatible.

We now treat the general case. Thus let $\alpha$ and $\beta$ be morphisms in $\fG$ such that $\beta \circ \alpha$ is defined and $t(\alpha) \le x$. We show that $(\alpha, \beta)$ is compatible. Write
\begin{displaymath}
\alpha = \sum_{i=1}^n \phi_i \circ \psi_i, \qquad
\beta = \sum_{j=1}^m \phi'_j \circ \psi_j'
\end{displaymath}
where $\phi_i$ and $\phi'_j$ belong to $\fU$ and the $\psi_i$ and $\psi'_j$ belong to $\fD$.
We have
\begin{align*}
\beta_* \alpha_*
&= \sum_{i,j} (\phi'_j)_* (\psi'_j)_* (\phi_i)_* (\psi_i)_*
=\sum_{i,j} (\phi'_j)_* (\psi'_j \circ \phi_i)_* (\psi_i)_* \\
&= \sum_{i,j} (\phi'_j \circ \psi'_j \circ \phi_i \circ \psi_i)_*
= (\beta \circ \alpha)_*.
\end{align*}
In the second step we used the previous paragraph, and in the third step we used the automatic compatibility for morphisms in $\fU$ and $\fD$. This completes the proof.
\end{proof}

\section{The general linear Lie algebra} \label{s:gl}

\subsection{Currying}

Let $V$ be a finite-dimensional vector space, and consider the Lie algebra $\fgl(V)$. A \textbf{representation} of $\fgl(V)$ consists of a vector space $M$ equipped with a linear map
\begin{displaymath}
\mu \colon \fgl(V) \to \End(M)
\end{displaymath}
such that
\begin{equation} \label{eq:rep}
\mu([X,Y])=[\mu(X),\mu(Y)]
\end{equation}
holds for all $X,Y \in \fgl(V)$, where $[X,Y]=XY-YX$ denotes the commutator. Now, $\fgl(V)$ is canonically isomorphic to $V \otimes V^*$. Thus giving a linear map $\mu$ as above is equivalent to giving a linear map
\begin{displaymath}
a \colon V \otimes M \to V \otimes M,
\end{displaymath}
and the following proposition determines the condition that \eqref{eq:rep} imposes on $a$. We first introduce some notation. For a linear map $a$ as above, we define maps
\begin{displaymath}
a_1, a_2, \tau \colon V \otimes V \otimes M \to V \otimes V \otimes M
\end{displaymath}
as follows. First, $\tau$ switches the first two tensor factors, i.e., $\tau(v \otimes w \otimes x)=w \otimes v \otimes x$. Next, $a_2$ is $\id \otimes a$, i.e., $a_2(v \otimes w \otimes x) = v \otimes a(w \otimes x)$. Finally, $a_1 = \tau \circ a_2 \circ \tau$. We now have:

\begin{proposition} \label{prop:glid}
Let $\mu$ and $a$ be corresponding linear maps as above. Then $\mu$ satisfies \eqref{eq:rep} if and only if $a$ satisfies the equation
\addtocounter{equation}{-1}
\begin{subequations}
\begin{equation} \label{eq:glid}
[a_1,a_2]=\tau(a_1-a_2).
\end{equation}
\end{subequations}
\end{proposition}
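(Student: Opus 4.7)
The plan is to unwind the currying correspondence explicitly, evaluate both sides of each identity on elementary tensors, and observe that contraction with linear functionals on $V$ translates one identity into the other. First I would pin down the adjunction: using $\fgl(V) = V \otimes V^*$, the correspondence between $\mu$ and $a$ can be written as
\[
\mu(v \otimes \phi)(m) = (\phi \otimes \id_M)(a(v \otimes m)).
\]
Then for elementary tensors $X = v \otimes \phi$ and $Y = w \otimes \psi$, I would compute the bracket directly: from the fact that $X$ acts on $V$ as $u \mapsto \phi(u) v$, one gets $[X,Y] = \phi(w)(v \otimes \psi) - \psi(v)(w \otimes \phi)$, so
\[
\mu([X,Y])(m) = \phi(w)(\psi \otimes \id_M)(a(v \otimes m)) - \psi(v)(\phi \otimes \id_M)(a(w \otimes m)).
\]

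Next I would iterate the currying formula to express the commutator $[\mu(X),\mu(Y)](m)$ in terms of $a$. Writing $a(v \otimes m) = \sum_i v_i \otimes m_i$ and $a(w \otimes m) = \sum_j w_j \otimes m_j$ and chasing the definitions carefully, one obtains
\[
[\mu(X),\mu(Y)](m) = (\phi \otimes \psi \otimes \id_M)\bigl([a_1,a_2](v \otimes w \otimes m)\bigr).
\]
The $\tau$-conjugation in the definition of $a_1$ is precisely what places the second application of $a$ onto the $v$-factor (to be contracted with $\phi$) while the $w$-factor stays inert, matching the order of operations in $\mu(X)\mu(Y)$. A direct expansion also gives
\[
\tau(a_1 - a_2)(v \otimes w \otimes m) = \sum_i w \otimes v_i \otimes m_i - \sum_j w_j \otimes v \otimes m_j,
\]
and applying $\phi \otimes \psi \otimes \id_M$ produces exactly the expression for $\mu([X,Y])(m)$ recorded above. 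Contracting \eqref{eq:glid} at $v \otimes w \otimes m$ by $\phi \otimes \psi \otimes \id_M$ therefore recovers \eqref{eq:rep} for $(X,Y)$, giving one implication by bilinearity in $X,Y$.

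For the converse, I would use that $V$ is finite-dimensional: choosing a basis $\{e_i\}$ of $V$ with dual basis $\{e^j\}$, any element of $V \otimes V \otimes M$ is determined by its images under the functionals $e^i \otimes e^j \otimes \id_M$. Hence if \eqref{eq:rep} holds for all $X,Y$, the contracted identities force the two sides of \eqref{eq:glid} to agree on every simple tensor $v \otimes w \otimes m$, and so on all of $V \otimes V \otimes M$. The main obstacle is the bookkeeping in the second paragraph: verifying that $\mu(X)\mu(Y)$ corresponds to $a_1 a_2$ rather than $a_2 a_1$ (or some composition involving additional $\tau$'s) requires careful unwinding of how iterated currying interleaves with the swap, but once that correspondence is pinned down the equivalence of \eqref{eq:rep} and \eqref{eq:glid} is immediate.
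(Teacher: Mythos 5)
Your proposal is correct and is essentially the same computation the paper carries out: you unwind the currying correspondence and verify, on elementary tensors, that $a_1a_2$ matches $\mu(X)\mu(Y)$ (and $a_2a_1$ matches $\mu(Y)\mu(X)$) after contraction, while $\tau(a_1-a_2)$ matches $\mu([X,Y])$. The paper does exactly this but in a fixed basis $\{v_i\}$ with dual basis $\{v_i^*\}$, evaluating both sides of \eqref{eq:glid} on $v_i \otimes v_k \otimes x$ and reading off the coefficient of $v_j \otimes v_\ell$, which plays the same role as your contraction by $\phi \otimes \psi \otimes \id_M$; your version is marginally more coordinate-free in notation but identical in substance. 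The only thing to note is that the step you flag as ``the main obstacle'' (verifying $\mu(X)\mu(Y) \leftrightarrow a_1a_2$ rather than $a_2a_1$) is indeed where the content lies, and it does check out: with $a(w\otimes m)=\sum_j w_j\otimes m_j$ one finds $(\phi\otimes\psi\otimes\id_M)\bigl(a_1a_2(v\otimes w\otimes m)\bigr)=\sum_j \psi(w_j)\,(\phi\otimes\id_M)\bigl(a(v\otimes m_j)\bigr)=\mu(X)\bigl(\mu(Y)(m)\bigr)$, exactly as you claim, so the sketch completes to a correct proof.
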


\begin{proof}
Assume $\mu$ defines a representation of $\fgl(V)$. Let $\{v_i\}_{1 \le i \le n}$ be a basis for $V$, let $\{v_i^*\}$ be the dual basis, and write $v_iv_j^*$ for the element of $\fgl(V)$ corresponding to $v_i \otimes v_j^*$. The map $a$ is given by
\begin{displaymath}
a(v_i \otimes x) = \sum_{j=1}^n v_j \otimes (v_iv_j^*) x,
\end{displaymath}
where here $(v_iv_j^*) x$ denotes $\mu(v_iv_j^*)(x)$. We have
\begin{displaymath}
a_1(a_2(v_i \otimes v_k \otimes x)) = \sum_{1 \le j,\ell \le n} v_j \otimes v_{\ell} \otimes (v_iv_j^*) (v_kv_{\ell}^*) x.
\end{displaymath}
The formula for $a_2(a_1(v_i \otimes v_k \otimes x))$ is the same, except that the order of $v_iv_j^*$ and $v_kv_{\ell}^*$ on the right is reversed. We thus find
\begin{displaymath}
[a_1, a_2](v_i \otimes v_k \otimes x) = \sum_{1 \le j,\ell \le n} v_j \otimes v_{\ell} \otimes [v_iv_j^*,v_kv_{\ell}^*] x.
\end{displaymath}
Using the formula
\begin{displaymath}
[v_iv_j^*,v_kv_{\ell}^*] = \delta_{j,k} (v_iv_{\ell}^*) - \delta_{i,\ell} (v_kv_j^*),
\end{displaymath}
we find
\begin{displaymath}
[a_1, a_2](v_i \otimes v_k \otimes x) = \left( \sum_{1 \le \ell \le n} v_k \otimes v_{\ell} \otimes (v_iv_{\ell}^*) x \right) - \left( \sum_{1 \le j \le n} v_j \otimes v_i \otimes (v_kv_j^*) x \right).
\end{displaymath}
The first term on the right is $(\tau a_1)(v_i \otimes v_k \otimes x)$, while the second is $(\tau a_2)(v_i \otimes v_k \otimes x)$. We thus see that $a$ satisfies \eqref{eq:glid}. The reasoning is reversible, and so if $a$ satisfies \eqref{eq:glid} then $\mu$ defines a representation of $\fgl(V)$.
\end{proof}

\begin{remark}
The identity \eqref{eq:glid} can be expressed equivalently in the form
\begin{displaymath}
\tau a \tau a - a \tau a \tau = a \tau - \tau a,
\end{displaymath}
where here we have written $a$ in place of $a_2=\id_V \otimes a$.
\end{remark}

We now extrapolate a general definition from Proposition~\ref{prop:glid}:

\begin{definition}
Let $V$ be an object of a tensor category $\cC$. We define the \textbf{curried general linear Lie algebra} on $V$, denoted $\ul{\fgl}(V)$, as follows. A representation of $\ul{\fgl}(V)$ consists of an object $M$ of $\cC$ together with a morphism $a \colon V \otimes M \to V \otimes M$ such that the equation $[a_1,a_2]=\tau (a_1-a_2)$ holds, using notation as in Proposition~\ref{prop:glid}.
\end{definition}

If $M,N$ are $\ul{\fgl}(V)$-module, then a \textbf{morphism} of $\ul{\fgl}(V)$-modules $\phi \colon M \to N$ is a morphism in $\cC$ such that the diagram
\begin{displaymath}
\xymatrix{
V \otimes M \ar[d]_{\id \otimes \phi} \ar[r]^a & V \otimes M \ar[d]^{\id \otimes \phi} \\
V \otimes N \ar[r]^a & V \otimes N }
\end{displaymath}
commutes. We write $\Rep(\ul{\fgl}(V))$ for the category of $\ul{\fgl}(V)$-modules. It is easily verified to be an abelian category.

\subsection{General observations} \label{ss:glgen}

We now discuss some basic aspects of $\ul{\fgl}(V)$-modules.

\textit{Trivial representations.} Given any object $M$ of $\cC$, we can define a $\ul{\fgl}(V)$-module structure on $M$ by taking the structure map $V \otimes M \to V \otimes M$ to be the zero map. We refer to this as the \defi{trivial representation} of $\ul{\fgl}(V)$ on $M$. By \emph{the} trivial representation, we mean the one on the unit object $\bone$.

\textit{The standard representation.} Let $M=V$ and take $a=\tau$. We verify \eqref{eq:glid}. This is an identity among endomorphisms of $V^{\otimes 3}$. What is called $\tau$ there is really $\tau_{12}$, and what is called $a$ is $\tau_{23}$. Using the braid relation, we have
\begin{displaymath}
\tau_{12} \tau_{23} \tau_{12} \tau_{23} = \tau_{23} \tau_{12} \tau_{23}^2 = \tau_{23} \tau_{12}.
\end{displaymath}
Similarly, $\tau_{23} \tau_{12} \tau_{23} \tau_{12}=\tau_{12} \tau_{23}$. The identity \eqref{eq:glid} follows. We call $V$ with this action the \defi{standard representation} of $\ul{\fgl}(V)$.

\textit{Tensor products.} Suppose that $M$ and $N$ are two $\ul{\fgl}(V)$-modules, with action maps $a$ and $b$. Regard $\End(V \otimes M)$ and $\End(V \otimes N)$ as subalgebras of $\End(V \otimes M \otimes N)$ in the obvious way. We give $M \otimes N$ the structure of a $\ul{\fgl}(V)$-module by taking the action map to be $a+b$. To see that this satisfies \eqref{eq:glid}, note that $a_1$ and $b_2$ commute in $\End(V^{\otimes 2} \otimes M \otimes N)$, since $a_1$ uses the first and third factors and $b_2$ the second and fourth, and similarly for $b_1$ and $a_2$. Therefore,
\begin{displaymath}
[a_1+b_1,a_2+b_2]
= [a_1,a_2]+[b_1,b_2]+[a_1,b_2]+[b_1,a_2]
= \tau(a_1-a_2)+\tau(b_1-b_2).
\end{displaymath}
The operation $\otimes$ endows $\Rep(\ul{\fgl}(V))$ with the structure of a tensor category.

\textit{Tensor powers of the standard representation.} Let $M=V^{\otimes n}$ be the $n$th tensor power of the standard representation. The action map is the endomorphism $\sum_{i=2}^{n+1} \tau_{1,i}$ of $V^{\otimes (n+1)}$.

\textit{Twisting by trace.} Let $M$ be a $\ul{\fgl}(V)$-module with structure map $a \colon V \otimes M \to V \otimes M$, and let $\delta$ be an element of the coefficient field $\bk$. Then the map $a+\delta \cdot \id_{V \otimes M}$ defines a new $\ul{\fgl}(V)$ representation on $M$. We denote the resulting $\ul{\fgl}(V)$-module by $M(\delta)$. We have $M(\delta)=M \otimes \bone(\delta)$ and $\bone(\delta_1) \otimes \bone(\delta_2) = \bone(\delta_1+\delta_2)$. The representation $\bone(\delta)$ is analogous to the representation of $\fgl_n$ given by $X \mapsto \delta \tr(X)$.

\textit{Behavior under tensor functors.} We have defined $\ul{\fgl}(V)$-modules purely in terms of the tensor structure on $\cA$. It follows that if $\Phi \colon \cA \to \cB$ is a tensor functor (with no exactness properties assumed) then $\Phi$ carries $\ul{\fgl}(V)$-modules to $\ul{\fgl}(\Phi(V))$-modules. This will remain true for the other curried Lie algebras we define, and will be a useful observation later on.

\textit{Some unexpected behavior.} There are examples where an ``actual'' $\fgl(V)$ exists in $\cC$, but where representations of $\fgl(V)$ and $\ul{\fgl}(V)$ are not the same. For example, let $\cC$ be the category of abelian groups, let $V=(\bZ/p\bZ)^n$, and let $M=\bZ^n$. Then $M$ does not admit a non-trivial representation of $\fgl(V)$: since $\End(M)$ is torsion-free under addition, there are no non-zero maps $\fgl(V) \to \End(M)$. However, $M$ does admit a non-trivial representation of $\ul{\fgl}(V)$: one can take the switching of factors map on $V \otimes M \cong V \otimes V$. The source of this discrepancy is that $V$ is not a dualizable object in $\cC$, and so $\fgl(V)$ is not isomorphic to $V \otimes V^*$; the curried algebra $\ul{\fgl}(V)$ always behaves as if it were $V \otimes V^*$.

\subsection{In species} \label{ss:gl-fb}

Let $\bV$ be the standard $\FB$-module and let $M$ be an arbitrary $\FB$-module. Suppose we have a map of $\FB$-modules
\begin{displaymath}
a \colon \bV \otimes M \to \bV \otimes M.
\end{displaymath}
Given a finite set $S$, an element $j \in S$, and an element $x \in M(S \setminus j)$, we can write
\begin{equation} \label{eq:gl-fb}
a(t^j \otimes x) = t^j \otimes \omega^{S \setminus j}(x) + \sum_{i \in S \setminus j} t^i \otimes \alpha^S_{i,j}(x).
\end{equation}
Thus $\omega$ is a $(0,0)$-operation on $M$, i.e., an endomorphism of $M$ as an $\FB$-module, and $\alpha$ is a simple $(1,1)$-operation on $M$.

\begin{proposition} \label{prop:gl-fb}
The map $a$ defines a representation of $\ul{\fgl}(\bV)$ on $M$ if and only if the following conditions hold:
\begin{enumerate}
\item The operations $\alpha$ and $\omega$ commute with themselves and each other.
\item Given a finite set $S$ and three distinct elements $i,j,k \in S$, we have $\alpha^{S \setminus i}_{j,k} \alpha^{S \setminus k}_{i,j} = \alpha^{S \setminus j}_{i,k}$.
\end{enumerate}
\end{proposition}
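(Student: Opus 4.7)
The plan is to prove the equivalence by expanding the identity $[a_1, a_2] = \tau(a_1 - a_2)$ of \eqref{eq:glid} on basis tensors and matching coefficients. From \eqref{eq:gl-fb} one has
\[
a_2(t^i \otimes t^j \otimes x) = t^i \otimes t^j \otimes \omega(x) + \sum_{k \ne i,j} t^i \otimes t^k \otimes \alpha^{S \setminus i}_{k,j}(x)
\]
for $i \ne j \in S$ and $x \in M(S \setminus \{i,j\})$, and an analogous formula for $a_1 = \tau a_2 \tau$ with the varying slot moved to the first position. First I would expand $a_1 a_2$ and $a_2 a_1$ as iterated double sums, sorting the contributions into $\omega\omega$, $\omega\alpha$, $\alpha\omega$, and $\alpha\alpha$ pieces, and similarly write out $\tau(a_1 - a_2)$. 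The identity \eqref{eq:glid} then becomes, basis vector by basis vector, a family of equalities of coefficients of $t^p \otimes t^q \otimes (-)$ indexed by ordered pairs of distinct elements $p, q \in S$.

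The proposition then follows from a case analysis on how $\{p, q\}$ meets $\{i, j\}$. When $\{p, q\} \subseteq \{i, j\}$, both sides match trivially (only $\omega^2$ or $\omega$ appears, and these cancel in the commutator). When exactly one of $p, q$ lies outside $\{i, j\}$ and the coincident index is on the same side as the input (i.e.\ $p = i$ or $q = j$), the RHS vanishes and equality reduces to $\omega\alpha = \alpha\omega$, giving the $\omega$-$\alpha$ commutation in (1). When the coincident index is on the opposite side ($p = j$ or $q = i$), only a single composition $\alpha\circ\alpha$ survives on the LHS and the RHS is a single $\alpha$; equality is exactly (2) up to relabeling of $(i,j,k)$. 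Finally, when $p, q \in S \setminus \{i, j\}$ with $p \ne q$, the RHS vanishes and the LHS becomes a difference of two $\alpha\circ\alpha$ terms whose equality is the self-commutativity of $\alpha$ on disjoint tuples, completing (1). The converse direction simply reassembles these coefficient-level identities into \eqref{eq:glid}, and self-commutativity of $\omega$ is automatic since $\omega$ is a $(0,0)$-operation.

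The main obstacle is bookkeeping. The inner sum of $a_1 a_2$ carries the exclusion $\ell \ne i, k$, while the inner sum of $a_2 a_1$ carries $\ell \ne j, k$; this slight asymmetry is exactly what produces the RHS $\tau(a_1 - a_2)$, and losing track of it would either drop a case or overcount a contribution. Once a case table indexed by $(p, q)$ is laid out, however, each cell is a one-line verification, and no identity beyond those in (1) and (2) emerges.
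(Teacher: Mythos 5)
Your proposal is correct and follows essentially the same route as the paper: expand $[a_1,a_2] = \tau(a_1-a_2)$ on basis tensors, match coefficients of $t^p \otimes t^q$, and do a case analysis on how $\{p,q\}$ meets the input indices. The only stylistic difference is that the paper first repackages $\alpha$ and $\omega$ into the single non-simple $(1,1)$-operation $\phi$ (with $\phi^S_{i,j}=\alpha^S_{i,j}$ for $i\neq j$ and $\phi^S_{i,i}=\omega^{S\setminus i}$), so its sums range without the exclusions you track by hand; your version therefore splits the paper's ``first equation'' (self-commutation of $\phi$) into the three subcases $p=i$, $q=j$, and $\{p,q\}\cap\{i,j\}=\emptyset$, but these reassemble to condition (1) exactly as you say, and your ``opposite side'' cases $p=j$ and $q=i$ are the paper's equations yielding condition (2).
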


\begin{proof}
Let $\phi$ be the (non-simple) $(1,1)$-operation on $M$ corresponding to $a$. Thus
\begin{displaymath}
a(t^j \otimes x) = \sum_{i \in S} t^i \otimes \phi^S_{i,j}(x).
\end{displaymath}
We have $\alpha=\phi[0]$ and $\omega=\phi[1]$ in the notation of \S \ref{ss:fbstruc}. Let $S$ be a finite set, let $j,k \in S$ be distinct, and let $x \in M(S \setminus \{j,k\})$. A simple computation gives
\begin{align*}
a_1(a_2(t^j \otimes t^k \otimes x)) &= \sum_{\ell \in S \setminus j} \sum_{i \in S \setminus \ell} t^i \otimes t^{\ell} \otimes \phi^{S \setminus \ell}_{i,j}(\phi^{S \setminus j}_{\ell,k}(x)) \\
a_2(a_1(t^j \otimes t^k \otimes x)) &= \sum_{i \in S \setminus k} \sum_{\ell \in S \setminus i} t^i \otimes t^{\ell} \otimes \phi^{S \setminus i}_{\ell,k}(\phi^{S \setminus k}_{i,j}(x)) \\
\tau a_1(t^j \otimes t^k \otimes x) &= \sum_{\ell \in S \setminus k} t^k \otimes t^{\ell} \otimes \phi^{S \setminus k}_{\ell,j}(x) \\
\tau a_2(t^j \otimes t^k \otimes x) &= \sum_{i \in S \setminus j} t^i \otimes t^j \otimes \phi^{S \setminus j}_{i,k}(x)
\end{align*}
Now, consider the equation $[a_1,a_2]=\tau(a_1-a_2)$. Letting $i,\ell \in S$ be distinct elements and examining the coefficients of $t^i \otimes t^{\ell}$, we obtain the following equations:
\begin{align*}
\phi^{S \setminus \ell}_{i,j} \circ \phi^{S \setminus j}_{\ell,k} &=
\phi^{S \setminus i}_{\ell,k} \circ \phi^{S \setminus k}_{i,j} & \text{if $i \ne k$ and $\ell \ne j$}, \\
\phi^{S \setminus i}_{j,k} \circ \phi^{S \setminus k}_{i,j} &=
\phi^{S \setminus j}_{i,k} & \text{if $i \ne k$ and $\ell=j$}, \\
\phi^{S \setminus \ell}_{k,j} \circ \phi^{S \setminus j}_{\ell,k} &=
\phi^{S \setminus k}_{\ell,j} & \text{if $i=k$ and $\ell \ne j$}, \\
\phi^{S \setminus k}_{j,j} &= \phi^{S \setminus j}_{k,k} & \text{if $i=k$ and $\ell=j$}.
\end{align*}
The first equation above is equivalent to condition (a). The second and third equations above are equivalent to each other, and to (b). The final equation above is automatic: it follows from the naturality of $\phi$. This completes the proof.
\end{proof}

For a finite set $S$ and distinct elements $i,j \in S$, let $\iota^S_{i,j} \colon S \setminus \{j\} \to S \setminus \{i\}$ be the bijection given by
\begin{displaymath}
\iota^S_{i,j}(k) = \begin{cases} j & \text{if $k=i$} \\ k & \text{if $k \ne i$} \end{cases}.
\end{displaymath}
Let $M$ be an $\FB$-module and let $\delta \in \bk$. We define the \defi{$\delta$-standard $\ul{\fgl}(\bV)$-structure} on $M$ to be the representation of $\ul{\fgl}(\bV)$ on $M$ given by  Proposition~\ref{prop:gl-fb} with $\omega=\delta\cdot \id$ and $\alpha^S_{i,j}=(\iota^S_{i,j})_*$. (One easily verifies the conditions of Proposition~\ref{prop:gl-fb}.) Explicitly, for $j \in S$ and $x \in M(S \setminus j)$, we have
\begin{displaymath}
a(t^j \otimes x) = \delta x + \sum_{i \in S \setminus j} t^i \otimes (\iota_{i,j}^S)_*(x).
\end{displaymath}
One easily verifies that this construction is functorial: any map of $\FB$-modules induces a map between the corresponding $\delta$-standard $\ul{\fgl}(\bV)$-modules.

\begin{remark} \label{rmk:gl-seq}
Let $M$ be a $\ul{\fgl}(\bV)$-module. Given a finite set $S$ and an element $i \in S$, define $\rho_i$ to be the composition
\begin{displaymath}
\xymatrix@C=4em{
M(S) \ar[r]^-{\alpha^{S \amalg \{\ast\}}_{i,\ast}} &
M(S \cup \{\ast\} \setminus i) \ar[r]^-{\iota^{S \amalg \{\ast\}}_{\ast,i}} &
M(S), }
\end{displaymath}
where $\{\ast\}$ is a one-point set. One easily verifies that $\rho_i^2=\rho_i$, and that for $i \ne j$, the operators $\rho_i$ and $\rho_j$ commute. Furthermore, for $\pi \in \Aut(S)$ we have $\pi \rho_i \pi^{-1}=\rho_{\pi(i)}$. Let $\fA_n$ be the monoid freely generated by $n$ commuting idempotents. We thus see that $M([n])$ carries a representation of the monoid $\bN \times (\fS_n \ltimes \fA_n)$, where the generator of the $\bN$ acts by the $\omega$ operation. In fact, a $\ul{\fgl}(\bV)$-module $M$ exactly corresponds to a sequence $(M_n)_{n \ge 0}$ where $M_n$ is a representation of $\bN \times (\fS_n \ltimes \fA_n)$. From this point of view, a $\delta$-standard $\ul{\fgl}(\bV)$-module is one where $\fA_n$ acts trivially and the generator of $\bN$ acts by $\delta$.
\end{remark}

\begin{remark}  \label{rmk:pieri}
Assume that $\bk$ is a field of characteristic $0$.  Consider the standard $\ul{\fgl}(\bV)$ action $a$ on the irreducible Specht module $M=\bM_{\lambda}$. Since $\bV \otimes M$ (recall this is the induction product) is multiplicity-free by the Pieri rule, $a$ is simply multiplication by a scalar on each piece $\bM_\mu$. We claim that this scalar is the content of the box in the Young diagram $\mu \setminus \lambda$, where the content is its row index minus its column index, i.e., if $i$ is the unique index such that $\mu_i > \lambda_i$, the content is $\mu_i - i$.

To prove this, we can first use Schur--Weyl duality to translate this into a statement about Schur functors $\bS_\lambda$. The advantage is that we can evaluate on vector spaces of different dimensions to deduce the following:
\begin{enumerate}[\indent (1)]
\item The value of $a$ on $\bS_\mu(\bk^n)$ can be computed on a highest weight vector, so it is independent of $n$ as long as $n \ge \ell(\mu)$. So we may as well assume $n = \ell(\mu)$.

\item To compute $a$ on the tensor power $(\bk^n)^{\otimes d}$, we have
  \[
    a(e_i \otimes (e_{j_1} \otimes \cdots \otimes e_{j_d})) = \sum_{k=1}^d e_{j_k} \otimes (e_{j_1} \otimes \cdots \otimes e_i \otimes \cdots \otimes e_{j_d})
  \]
  where the sum is over all ways of swapping $e_i$ with some $e_{j_k}$. In particular, tensoring with the determinant character increases the eigenvalues of $a$ by $1$, so using this and (1), we may as well assume that we are adding a box to the first column of $\lambda$.

\item As can be seen with the tensor power $(\bk^n)^{\otimes d}$ in (2), applying the transpose duality to $a$ multiplies its eigenvalues by $-1$ since this affects Schur--Weyl duality by tensoring the usual $\fS_d$-action on tensor powers with the sign character. So to add a box to the first column, we just need to understand adding a box to the first row.

\item Iterating, we reduce to the case that $\lambda=\emptyset$ and $\mu=(1)$. In that case, it follows immediately that $a$ is the $0$ map, which is the content of the box that we added. \qedhere
\end{enumerate}
\end{remark}

\subsection{In $\OB$-modules}

Let $\OB$ be the category of finite totally ordered sets and order-preserving bijections, and let $\Mod_{\OB}$ denote the category of $\OB$-modules. Given $\OB$-modules $M$ and $N$, their \defi{shuffle tensor product} is
\begin{displaymath}
(M \otimes_{\rm shuff} N)(S) = \bigoplus_{S=A \amalg B} M(A) \otimes N(B),
\end{displaymath}
where the sum is over all partitions of $S$ into two disjoint sets $A$ and $B$, and $A$ and $B$ are given the induced order. The shuffle tensor product gives the category $\Mod_{\OB}$ of $\OB$-modules the structure of a symmetric monoidal category. (Note: $\OB$-modules are equivalent to graded vector spaces, but the shuffle tensor product does not correspond with the usual tensor product of graded vector spaces.)

Let $V$ be the $\OB$-module that is $\bk$ in degree~1 and~0 in other degrees. We make one comment on $\ul{\fgl}(V)$-modules. Recall that $\fA_n$ is the monoid generated by $n$ commuting idempotents $e_1, \ldots, e_n$. The symmetric group $\fS_n$ acts on $\fA_n$, and so we can form the semi-direct product $\fS_n \ltimes \fA_n$. Define $\fM_n$ to be the submonoid of $\fS_n \ltimes \fA_n$ generated by the elements $s_i e_i$ and $e_i s_i=s_i e_{i+1}$ for $1 \le i \le n-1$, where $s_i$ is the transposition of $\fS_n$ that swaps $i$ and $i+1$. Then one can show that giving a $\ul{\fgl}(V)$-module is equivalent to giving a sequence of representations of the monoids $\bN \times \fM_n$ (compare with Remark~\ref{rmk:gl-seq}). Details and other examples in $\Mod_{\OB}$ can be found in \cite{jun}.

\subsection{Braidings} \label{ss:braidedgl}

Suppose that $\cC$ is a (not necessarily braided) tensor category and $V$ is a braided object of $\cC$, that is, we are given an isomorphism $\beta \colon V \otimes V \to V \otimes V$ such that the endomorphisms $\id \otimes \beta$ and $\beta \otimes \id$ of $V^{\otimes 3}$ satisfy the braid relation. We can define $\ul{\fgl}(V)$ in this setting, as follows: a $\ul{\fgl}(V)$-module is an object $M$ equipped with a map $a \colon V \otimes M \to V \otimes M$ satisfying
\begin{displaymath}
\beta^{-1} a \beta a - a \beta a \beta^{-1} = a \beta - \beta a.
\end{displaymath}
Here we have written $\beta$ for $\beta \otimes \id$ and $a$ for $\id \otimes a$. The inverses are included on some factors so that $M=V$ with $a=\beta$ defines a $\ul{\fgl}(V)$-module (the standard representation).

\begin{remark} \label{rmk:VA}
Let $\VA$ be the category of finite dimensional vector spaces over the finite field $\bF$, and let $\VB$ be the subcategory where the morphisms are isomorphisms. We assume $\operatorname{char}(\bF)$ is invertible in $\bk$. Given two $\VB$-modules $M$ and $N$, we define their \defi{parabolic tensor product} by
\begin{displaymath}
(M \otimes_{\rm par} N)(X) = \bigoplus_{Y \subseteq X} M(Y) \otimes N(X/Y)
\end{displaymath}
where the sum is over all subspaces $Y \subseteq X$. This tensor product has a natural braiding, first considered by Joyal--Street \cite{joyal-street}. We expect that $\VA$-modules can be expressed as a curried structure in the braided category of $\VB$-modules. It would be interesting to understand $\ul{\fgl}(V)$-modules where $V$ is the standard $\VB$-module (i.e., $V(X)=\bk$ if $X$ is one-dimensional and $V(X)=0$ otherwise).
\end{remark}

\section{The symplectic Lie algebra} \label{s:symp}

\subsection{Currying}

Let $V$ be a finite-dimensional vector space. The space $V \oplus V^*$ carries a natural symplectic form, and so we can consider the corresponding symplectic Lie algebra $\fsp(V \oplus V^*)$. This algebra admits a decomposition
\begin{displaymath}
\fsp(V \oplus V^*) = \Div^2(V^*) \oplus \fgl(V) \oplus \Div^2(V).
\end{displaymath}
We thus see that giving a linear map
\begin{displaymath}
\mu \colon \fsp(V \oplus V^*) \otimes M \to M
\end{displaymath}
is equivalent to giving linear maps
\begin{displaymath}
a \colon V \otimes M \to V \otimes M, \qquad b \colon \Div^2(V) \otimes M \to M, \qquad b' \colon M \to \Sym^2(V) \otimes M.
\end{displaymath}

\begin{proposition} \label{prop:sp-curry}
Let $\mu$ and $(a,b,b')$ as above correspond. Then $\mu$ defines a representation of $\fsp(V \oplus V^*)$ if and only if $(a,b,b')$ satisfy the following conditions:
\begin{enumerate}
\item $a$ satisfies \eqref{eq:glid}, that is, it defines a $\ul{\fgl}(V)$ structure on $M$.
\item $b b_2=b b_1$ holds as maps $\Div^2 V \otimes \Div^2 V \otimes M \to M$ and $b'_1 b'=b'_2 b'$ holds as maps $M \to \Sym^2 V \otimes \Sym^2 V \otimes M$, that is, the multiplication defined by $b$ is commutative and the co-multiplication defined by $b'$ is co-commutative.
\item $b$ and $b'$ are maps of $\ul{\fgl}(V)$-modules, where $\Div^2(V)$ and $\Sym^2(V)$ are equipped with their natural $\ul{\fgl}(V)$ actions.
\item $b'b-b_1 b'_2 = (m \otimes 1)(1 \otimes a)(\Delta \otimes 1)$ holds as maps $\Div^2 V \otimes M \to \Sym^2 V \otimes M$. Here $\Delta$ is comultiplication and $m$ is multiplication.
\end{enumerate}
\end{proposition}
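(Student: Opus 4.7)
The plan is to extend the argument of Proposition~\ref{prop:glid}, working summand-by-summand with respect to the decomposition $\fsp(V \oplus V^*) = \Div^2(V^*) \oplus \fgl(V) \oplus \Div^2(V)$. Fix a basis $\{v_i\}$ of $V$ with dual basis $\{v_i^*\}$; then $\fgl(V)$, $\Div^2(V)$, and $\Div^2(V^*)$ have natural bases whose scalar actions under $\mu$ are encoded respectively by the matrix coefficients of $a$, $b$, and $b'$. The identity $\mu([X,Y]) = [\mu(X),\mu(Y)]$ will be checked separately for each of the four inequivalent choices of pair of summands containing $X$ and $Y$, and I claim these four cases correspond exactly to conditions (a), (b), (c), and (d).

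Case $(\fgl,\fgl)$ reduces directly to Proposition~\ref{prop:glid} and yields (a). The brackets $[\Div^2(V),\Div^2(V)]$ and $[\Div^2(V^*),\Div^2(V^*)]$ both vanish in $\fsp$, so the corresponding commutation of matrix coefficients is precisely the commutativity of the multiplication defined by $b$ and the cocommutativity of the comultiplication defined by $b'$, giving (b). The mixed brackets $[\fgl(V),\Div^2(V)] \subseteq \Div^2(V)$ and $[\fgl(V),\Div^2(V^*)] \subseteq \Div^2(V^*)$ describe the natural $\fgl(V)$-actions on $\Div^2(V)$ and $\Div^2(V^*)$; writing these out, the commutation of $a$ with $b$ (resp.\ $b'$) is exactly equivariance, i.e., the statement that $b$ and $b'$ are maps of $\ul{\fgl}(V)$-modules, yielding (c).

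The main obstacle will be the cross bracket $[\Div^2(V^*),\Div^2(V)] \subseteq \fgl(V)$, which is responsible for condition (d). Explicitly, a bracket like $[v_i^* v_j^*,\,v_k v_l]$ unfolds, via the symplectic form, into a sum of four $\fgl(V)$ generators obtained by contracting one dual element against one element while leaving the remaining two factors to form a rank-one tensor. To match this with the right-hand side $(m \otimes 1)(1 \otimes a)(\Delta \otimes 1)$, I would observe that $\Delta \otimes 1$ splits the divided square into its two factors, the operator $1 \otimes a$ applies an $\fgl(V)$ matrix coefficient using one factor together with the $M$ tensorand, and $m \otimes 1$ reassembles the spectator factor with the leftover output of $a$ into a symmetric square; the four resulting terms match the four contraction terms of the bracket. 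Once this combinatorial identification is made, every step of the analysis is reversible, so the four cases together establish the ``if and only if''.
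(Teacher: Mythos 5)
Your approach is essentially identical to the paper's: decompose $\fsp(V \oplus V^*)$ into its three summands, check the bracket relation case by case, and match the cases to (a)--(d), with the cross bracket $[\Div^2(V^*),\Div^2(V)]\subseteq\fgl(V)$ giving (d) after expanding in a basis. One thing you leave implicit that the paper addresses explicitly: the base $\bk$ is only assumed to be a commutative ring, and when $2$ is not invertible the elements $v_iv_j$ with $i\ne j$ do not span $\Div^2(V)$ --- one must also verify condition (d) on the divided squares $v_i^{[2]}\otimes x$ separately. The paper performs that extra check; your write-up should too, or should explicitly restrict to the case where $2$ is invertible. Apart from this, the combinatorial identification you describe for matching $(m\otimes 1)(1\otimes a)(\Delta\otimes 1)$ with the contraction terms of $[v_i^*v_j^*, v_kv_\ell]$ is the right computation.
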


\begin{proof}
Let $M$ be a $\fsp(V\oplus V^*)$-module. (a) and (b) are translations of the relations satisfied by the subalgebras spanned by each component $\fgl(V)$, $\Div^2(V^*)$ and $\Div^2(V)$ while (c) is a translation of the relation between $\fgl(V)$ and the two components $\Div^2(V)$ and $\Div^2(V^*)$.

For (d), let $v_1,\dots,v_n$ be a basis for $V$. Pick $x \in M$ and $v_iv_j \in \Div^2 V$. Then
\begin{align*}
  b'(b(v_iv_j \otimes x)) = \sum_k v_k^2 \otimes (v_k^*)^{[2]}(v_iv_j) x + \sum_{k < \ell} v_kv_\ell \otimes (v^*_kv^*_\ell) (v_iv_j) x
\end{align*}
and
\begin{align*}
b_1(b'_2(v_iv_j \otimes x)) = \sum_k v_k^2 \otimes (v_iv_j)(v_k^*)^{[2]} x + \sum_{k<\ell} v_kv_\ell \otimes (v_iv_j)(v^*_kv^*_\ell) x.
\end{align*}
Next
\begin{displaymath}
[v_k^*v_\ell^*, v_iv_j] = \delta_{i,k} v^*_\ell v_j + \delta_{j,k} v^*_\ell v_i + \delta_{i,\ell} v_k^* v_j + \delta_{j,\ell} v_k^* v_i, 
\end{displaymath}
and so
\begin{displaymath}
(b'b-b_1b'_2)(v_iv_j \otimes x) =  \sum_k v_i v_k \otimes (v_k^* v_j) x  + \sum_k v_j v_k \otimes (v_k^* v_i) x.
\end{displaymath}
This is clearly the same as $(m \otimes 1)(1 \otimes a)(\Delta \otimes 1)$. If $2$ is invertible in $\bk$, then we are done. To finish the remaining case, we also need to show that these maps agree on the elements $v_i^{[2]} \otimes x$. The calculation is similar to what we have explained above, with the final result being
\[
  (b'b-b_1b'_2)(v_i^{[2]} \otimes x) =  \sum_k v_i v_k \otimes (v_k^* v_i) x = (m \otimes 1)(1 \otimes a)(\Delta \otimes 1)(v_i^{[2]} \otimes x).
  \]
Conversely, if $M$ is equipped with the three maps $a,b,b'$, then we see that the corresponding action of $\fsp(V\oplus V^*)$ on $M$ respects the Lie bracket.
\end{proof}

\begin{definition} \label{def:curried-sp}
Let $V$ be an object of a tensor category $\cC$. We define a module over the \defi{curried symplectic algebra} $\ul{\fsp}(V \oplus V^*)$ to be an object $M$ of $\cC$ equipped with maps
\begin{displaymath}
a \colon V \otimes M \to V \otimes M, \qquad b \colon \Div^2V \otimes M \to M, \qquad b' \colon M \to \Sym^2V \otimes M.
\end{displaymath}
satisfying \cref{prop:sp-curry}{a}--\cref{prop:sp-curry}{d}.
\end{definition}

\subsection{In species} \label{ss:sp-sp}

Let $M$ be an $\FB$-module equipped with maps
\begin{displaymath}
a \colon \bV \otimes M \to \bV \otimes M, \qquad b \colon \Div^2(\bV) \otimes M \to M, \qquad b' \colon M \to \Sym^2(\bV) \otimes M.
\end{displaymath}
Let $\alpha$ and $\omega$ be the simple $(1,1)$- and $(0,0)$-operations corresponding to $a$ as in \eqref{eq:gl-fb}. Let $\beta$ and $\beta'$ be the symmetric $(0,2)$- and $(2,0)$-operations corresponding to $b$ and $b'$. Thus we have
\begin{displaymath}
b(t^{\{i,j\}} \otimes x) = \beta^S_{i,j}(x), \qquad
b'(y) = \sum_{\{i,j\} \subset S} t^{\{i,j\}} \otimes (\beta')^S_{i,j}(y).
\end{displaymath}
for $x \in M(S \setminus \{i,j\})$ and $y \in M(S)$.

\begin{proposition} \label{prop:sp-gl}
The triple $(a,b,b')$ defines a representation of $\ul{\fsp}(\bV \oplus \bV^*)$ on $M$ if and only if the following conditions hold (for all finite sets $S$):
\begin{enumerate}
\item $\alpha$, $\omega$, $\beta$, and $\beta'$ pairwise commute (and each commutes with itself).
\item Given $i,j,k \in S$ distinct, we have $\alpha^{S \setminus i}_{j,k} \circ \alpha^{S \setminus k}_{i,j} = \alpha^{S \setminus j}_{i,k}$.
\item Given $i,j,k \in S$ distinct, we have $\alpha^S_{i,j} \circ \beta^{S \setminus j}_{i,k}=\beta^{S \setminus i}_{j,k}$, and similarly $(\beta')^{S \setminus i}_{j,k} \circ \alpha_{i,j}^S = (\beta')^{S \setminus j}_{i,k}$.
\item Given $i,j,k \in S$ distinct, we have $(\beta')^S_{i,j} \circ \beta^S_{j,k}=\alpha_{i,k}^{S \setminus j}$.
\item Given $i,j \in S$ distinct, we have $(\beta')^S_{i,j} \circ \beta^S_{i,j}=2\omega^{S \setminus \{i,j\}}$.
\end{enumerate}
\end{proposition}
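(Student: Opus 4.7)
The strategy is to translate each of the four conditions (a)--(d) of Proposition~\ref{prop:sp-curry} into operation-level conditions on $\alpha$, $\omega$, $\beta$, $\beta'$, and check that together they are exactly our conditions (a)--(e). Conditions (a) and (b) of Proposition~\ref{prop:sp-curry} translate almost by inspection. Condition (a) says $a$ defines a $\ul{\fgl}(\bV)$-structure, so by Proposition~\ref{prop:gl-fb} it is equivalent to self-commutativity and mutual commutativity of $\alpha$ and $\omega$ together with the three-index identity of our (b). Condition (b) --- the commutativity of $b$ and cocommutativity of $b'$ --- unpacks directly into self-commutativity of $\beta$ and of $\beta'$. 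This accounts for our (b) and for the self-commutations of all four operations as well as the $\alpha$--$\omega$ commutation inside our (a).

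For condition (c) of Proposition~\ref{prop:sp-curry} I need the $\ul{\fgl}(\bV)$-structure on $\Div^2(\bV) \cong \Sym^2(\bV)$, which is inherited from the embedding into $\bV \otimes \bV$ via the tensor-product rule of \S\ref{ss:glgen} and sends $t^s \otimes t^{\{j,k\}}$ to $t^j \otimes t^{\{s,k\}} + t^k \otimes t^{\{s,j\}}$. Writing out the equivariance diagram for $b$ on a basis element $t^s \otimes t^{\{j,k\}} \otimes x$ and comparing coefficients of $t^{s'}$ in the output breaks into three cases: the coefficient of $t^s$ itself gives the commutativity of $\omega$ with $\beta$; the coefficient of $t^{s'}$ for $s' \notin \{s,j,k\}$ gives the commutativity of $\alpha$ with $\beta$ on disjoint indices; and the coefficients of $t^j$ and $t^k$ each give the first identity in our (c) (in two equivalent forms via relabeling). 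The analysis for $b'$ is formally dual and yields the commutations of $\omega$ and $\alpha$ with $\beta'$ together with the second identity in our (c).

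The main work lies in condition (d) of Proposition~\ref{prop:sp-curry}, which must simultaneously yield our (d), our (e), and the commutativity of $\beta$ with $\beta'$. The plan is to evaluate both sides of $b'b - b_1 b'_2 = (m \otimes 1)(1 \otimes a)(\Delta \otimes 1)$ on $t^{\{j,k\}} \otimes x$ and compare the coefficient of each $t^{\{i,\ell\}}$ in the output. On the left side, $b'b$ contributes $(\beta')^S_{i,\ell}\beta^S_{j,k}(x)$ for every $\{i,\ell\} \subset S$, while $b_1 b'_2$ contributes $\beta^{S \setminus \{i,\ell\}}_{j,k}(\beta')^{S \setminus \{j,k\}}_{i,\ell}(x)$ only when $\{i,\ell\} \subset S \setminus \{j,k\}$. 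On the right side, $\Delta(t^{\{j,k\}}) = t^j \otimes t^k + t^k \otimes t^j$, so applying $1 \otimes a$ with the formula \eqref{eq:gl-fb} and then $m \otimes 1$ yields
\begin{displaymath}
2\, t^{\{j,k\}} \otimes \omega^{S \setminus \{j,k\}}(x) + \sum_{m \in S \setminus \{j,k\}} \bigl( t^{\{j,m\}} \otimes \alpha^{S \setminus j}_{m,k}(x) + t^{\{k,m\}} \otimes \alpha^{S \setminus k}_{m,j}(x) \bigr).
\end{displaymath}
Matching coefficients in three cases completes the argument: $\{i,\ell\}$ disjoint from $\{j,k\}$ gives the commutativity of $\beta$ and $\beta'$ on disjoint indices; $|\{i,\ell\} \cap \{j,k\}| = 1$ gives our (d) in its two symmetric forms; and $\{i,\ell\} = \{j,k\}$ gives our (e), the coefficient $2$ coming precisely from the two terms in $\Delta(t^{\{j,k\}})$. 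The principal obstacle is the case-analysis bookkeeping and the correct tracking of the factor of $2$ in (e); since each coefficient comparison is reversible, the equivalence goes in both directions, finishing the proof.
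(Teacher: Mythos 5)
Your proposal is correct and follows essentially the same route as the paper's proof: the paper also unpacks conditions (a)--(d) of Proposition~\ref{prop:sp-curry} by evaluating the relevant maps on basis elements, matching coefficients case by case, and obtaining the five operation-level conditions (with the factor of $2$ in (e) coming from the two terms in $\Delta(t^{\{j,k\}})$ exactly as you describe, and reversibility handling the converse).
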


\begin{proof}
Suppose that conditions \cref{prop:sp-gl}{a}--\cref{prop:sp-gl}{d} above hold. We verify that $(a,b,b')$ satisfy conditions \cref{prop:sp-curry}{a}--\cref{prop:sp-curry}{d}. Condition \cref{prop:sp-curry}{a} follows from \cref{prop:sp-gl}{a}, \cref{prop:sp-gl}{b}, and Proposition~\ref{prop:gl-fb}. Condition \cref{prop:sp-curry}{b} follows easily from \cref{prop:sp-gl}{a}.

We now verify \cref{prop:sp-curry}{c}. Let
\begin{displaymath}
a' \colon \bV \otimes \Div^2(\bV) \to \bV \otimes \Div^2(\bV)
\end{displaymath}
be the $\ul{\fgl}(\bV)$-action on $\Div^2(V)$. To show that $b$ is $\ul{\fgl}(V)$-equivariant, we must show that the diagram
\begin{displaymath}
\xymatrix@C=4em{
\bV \otimes \Div^2(\bV) \otimes M \ar[r]^-{1 \otimes b} \ar[d]_{a+a'} &
\bV \otimes M \ar[d]^a \\
\bV \otimes \Div^2(\bV) \otimes M \ar[r]^-{1 \otimes b} &
\bV \otimes M }
\end{displaymath}
commutes; recall from \S \ref{ss:glgen} that $a+a'$ defines the tensor product representation. Let $i,j,k \in S$ be distinct and let $x \in M(S \setminus \{i,j,k\})$. We have
\begin{displaymath}
a'(t^i \otimes t^{\{j,k\}})=t^j \otimes t^{\{i,k\}} + t^k \otimes t^{\{i,j\}},
\end{displaymath}
and so
\begin{align*}
(a+a')(t^i \otimes t^{\{j,k\}} \otimes x)
=& t^j \otimes t^{\{i,k\}} \otimes x + t^k \otimes t^{\{i,j\}} \otimes x + \\
&t^i \otimes t^{\{j,k\}} \otimes \omega^{S \setminus \{i,j,k\}}(x) + \sum_{\ell \in S \setminus \{i,j,k\}} t^{\ell} \otimes t^{\{j,k\}} \otimes \alpha_{\ell,i}^{S \setminus \{j,k\}}(x),
\end{align*}
and so
\begin{align*}
(1 \otimes b)(a+a')(t^i \otimes t^{\{j,k\}} \otimes x)
=& t^j \otimes \beta^{S \setminus j}_{i,k}(x) + t^k \otimes \beta^{S \setminus k}_{i,j}(x) \\
& t^i \otimes \beta^{S \setminus i}_{j,k}(\omega^{S \setminus \{i,j,k\}}(x)) +
\sum_{\ell \in S \setminus \{i,j,k\}} t^{\ell} \otimes \beta^{S \setminus \ell}_{j,k}(\alpha^{S \setminus \{j,k\}}_{\ell,i}(x)).
\end{align*}
On the other hand, we have
\begin{align*}
a(1 \otimes b)(t^i \otimes t^{\{j,k\}} \otimes x)
&= a(t^i \otimes \beta^{S \setminus i}_{j,k}(x)) \\
&= t^i \otimes \omega^{S \setminus i}(\beta^{S \setminus i}_{j,k}(x)) + \sum_{\ell \in S \setminus i} t^{\ell} \otimes \alpha^S_{\ell,i}(\beta^{S \setminus i}_{j,k}(x)).
\end{align*}
The above two expressions coincide if and only if the following equations hold (for $\ell \in S \setminus \{i,j,k\}$):
\begin{align*}
\beta^{S \setminus i}_{j,k}(\omega^{S \setminus \{i,j,k\}}(x)) &= \omega^{S \setminus i}(\beta^{S \setminus i}_{j,k}(x)) &
\beta^{S \setminus \ell}_{j,k}(\alpha^{S \setminus \{j,k\}}_{\ell,i}(x)) &= \alpha^S_{\ell,i}(\beta^{S \setminus i}_{j,k}(x)) \\
\beta^{S \setminus j}_{i,k}(x) &= \alpha^S_{j,i}(\beta^{S \setminus i}_{j,k}(x)) &
\beta^{S \setminus k}_{i,j}(x) &= \alpha^S_{k,i}(\beta^{S \setminus i}_{j,k}(x))
\end{align*}
The two equalities on the first line follow since $\beta$ commutes with $\alpha$ and $\omega$ by \cref{prop:sp-gl}{a}. The equalities on the second line are \cref{prop:sp-gl}{c}. This shows that $b$ is $\ul{\fgl}(\bV)$-equivariant. The proof for $b'$ is similar.

We now verify \cref{prop:sp-curry}{d}. We have
\begin{align*}
b'(b(t^{\{i_1,i_2\}} \otimes x)) &= \sum_{\{i_3, i_4\} \subset S} t^{\{i_3,i_4\}} \otimes \beta'_{i_3,i_4}(\beta_{i_1,i_2}(x)),\\
b_1b'_2(t^{\{i_1, i_2\}} \otimes x) &= \sum_{\{i_3, i_4\} \subset S \setminus \{i_1,i_2\}} t^{\{i_3, i_4\}} \otimes \beta_{i_1,i_2}(\beta'_{i_3,i_4}(x)).
\end{align*}
Now, for $\{i_3, i_4\} \subseteq S \setminus \{i_1,i_2\}$, we have $\beta_{i_1,i_2} \beta'_{i_3,i_4}=\beta'_{i_3,i_4} \beta_{i_1,i_2}$ since $\beta$ and $\beta'$ commute. We thus see that
\begin{displaymath}
(b'b-b_1b'_2)(t^{\{i_1, i_2\}} \otimes x) = \sum_{\substack{\{i_3,i_4\} \subseteq S\\ \{i_3,i_4\} \cap \{i_1,i_2\} \ne \emptyset}} t^{\{i_3, i_4\}} \otimes \beta'_{i_3,i_4}(\beta_{i_1,i_2}(x)).
\end{displaymath}
On the other hand,
\begin{align*}
& (m \otimes 1)(1 \otimes a)(\Delta \otimes 1)(t^{\{i_1,i_2\}} \otimes x) \\
=& 2 t^{\{i_1,i_2\}} \otimes \omega(x) + \sum_{j \in S \setminus \{i_1,i_2\}} (t^{\{j,i_2\}} \otimes \alpha_{j,i_1}(x) + t^{\{j,i_1\}} \otimes \alpha_{j,i_2}(x)).
\end{align*}
We claim these last two expressions coincide. The coefficient of $t^{\{i_1, i_2\}}$ in the first expression is $\beta'_{i_1,i_2}(\beta_{i_1,i_2}(x))$ and in the second expression is $2 \omega(x)$. These are equal by \cref{prop:sp-gl}{e}. Suppose now that $j \not\in \{i_1,i_2\}$. The $t^{\{i_1,j\}}$ component in the first expression is $\beta'_{i_1,j}(\beta_{i_1,i_2}(x))$ and in the second expression is $\alpha_{j,i_2}(x)$. These are equal by \cref{prop:sp-gl}{d}. The other components are similar.

This verifies the conditions \cref{prop:sp-curry}{a}--\cref{prop:sp-curry}{d}. This reasoning is completely reversible, and so the result follows.
\end{proof}

\subsection{The Brauer category} \label{ss:brauer}

Let $\fG=\fG(\delta)$ be the Brauer category with parameter $\delta \in \bk$. The objects of this category are finite sets. The space $\Hom_{\fG}(S,T)$ of morphisms is the vector space spanned by Brauer diagrams from $S$ to $T$; such a diagram is simply a perfect matching on the set $S \amalg T$. For the definition of composition (and additional details), see \cite[\S 5]{brauercat1}. We note that the composition law depends on the parameter $\delta$.

A Brauer diagram $S \to T$ is called upwards if there are no edges contained in $S$. The upwards Brauer category $\fU$ is the subcategory of $\fG$ containing all objects and where $\Hom_{\fU}(S,T)$ is spanned by upwards diagrams. There is a similarly defined downwards Brauer category $\fD$. The intersection $\fM$ of $\fU$ and $\fD$ is the linearization of $\FB$: that is, $\Hom_{\fM}(S,T)$ is the vector space spanned by bijections $S \to T$. The pair $(\fU, \fD)$ is a triangular structure on $\fG$, see \cite[Proposition~5.5]{brauercat1}. (Note that \cite{brauercat1} works in characteristic~0, but this statement and its proof hold in general.)

Suppose that $M$ is a $\fG$-module. Restricting to $\FB \subset \fG$, we can regard $M$ as an $\FB$-module. Let $S$ be a finite set and let $i,j \in S$ be distinct elements. We have a morphism $\eta^S_{i,j} \colon S \setminus \{i,j\} \to S$ in $\fG$ corresponding to the diagram with an edge between $i$ and $j$ in the target, and that is the identity elsewhere. This induces a linear map
\begin{displaymath}
\beta^S_{i,j} \colon M(S \setminus \{i,j\}) \to M(S).
\end{displaymath}
One easily sees that $\beta$ is a symmetric $(0,2)$-operation on $M$. Similarly, we have a morphism $(\eta')^S_{i,j} \colon S \to S \setminus \{i,j\}$ in $\fG$ using the opposite diagram, and this induces a linear map
\begin{displaymath}
(\beta')^S_{i,j} \colon M(S) \to M(S \setminus \{i,j\}).
\end{displaymath}
As above, $\beta'$ is a symmetric $(2,0)$-operation on $M$. Using the rule for composition in $\fG$, one easily sees that $\beta$ and $\beta'$ satisfy the following conditions (in what follows, $S$ is a finite set):
\begin{enumerate}
\item $\beta$ and $\beta'$ commute with themselves and with each other.
\item Let $i,j,k \in S$ be distinct. Then $(\beta')^S_{i,j} \beta^S_{j,k}=\iota^{S \setminus j}_{i,k}$.
\item Let $i,j \in S$ be distinct. Then $(\beta')^S_{i,j} \beta^S_{i,j}=\delta \cdot {\rm id}$.
\end{enumerate}
Since the $\eta$ and $\eta'$ morphisms, together with the morphisms in $\FB$, generate $\fG$, we see that that the operations $\beta$ and $\beta'$ completely determine the $\fG$-structure on $M$. The following proposition shows that the above conditions exactly characterize the operations we see in this manner:

\begin{proposition} \label{prop:brauer-op}
Let $M$ be an $\FB$-module equipped with a symmetric $(0,2)$-operation $\beta$ and a symmetric $(2,0)$-operation $\beta'$ satisfying (a), (b), and (c) above. Then $M$ carries a unique $\fG$-structure inducing $\beta$ and $\beta'$.
\end{proposition}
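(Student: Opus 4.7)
The plan is to apply the general triangular-category machinery of Proposition~\ref{prop:tri-comp} to the triangular structure $(\fU,\fD)$ on $\fG$ recalled in \S\ref{ss:brauer}. I will produce a $\fU$-module structure on $M$ from $\beta$ and a $\fD$-module structure on $M$ from $\beta'$, each restricting to the given $\FB$-action, and then verify that these two structures are compatible. Uniqueness of the resulting $\fG$-structure is immediate, because the $\FB$-morphisms together with the caps $\eta^S_{i,j}$ and cups $(\eta')^S_{i,j}$ generate $\fG$, so any such $\fG$-structure on $M$ is already pinned down on a generating set by the given data.

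To build the $\fU$-structure I would first establish a presentation of $\fU$ over $\FB$: the caps $\eta^S_{i,j}$ generate, subject only to $\FB$-naturality and the relation that distinct caps commute. This is an elementary combinatorial fact about upwards Brauer diagrams, which factor as an injection $S\hookrightarrow T$ followed by a sequence of independent caps on the pairs of $T$ outside the image, and is proved by induction on the number of cap-edges. With such a presentation in hand, the assignment $\eta^S_{i,j}\mapsto \beta^S_{i,j}$ extends uniquely to a $\fU$-action on $M$: $\FB$-naturality is built into the very notion of operation, and commutation of distinct caps is part of condition~(a). The $\fD$-structure from $\beta'$ is constructed symmetrically.

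For compatibility I invoke Proposition~\ref{prop:tri-comp} with $\cU$ equal to the set of caps (together with the $\FB$-morphisms, whose compatibility is automatic since $\fU$ and $\fD$ agree on $\fM$) and $\cD$ equal to the set of cups. The task reduces to showing that for each finite set $S$ and pairs $\{i,j\},\{k,\ell\}\subseteq S$, the $\fG$-composite $(\eta')^S_{k,\ell}\circ\eta^S_{i,j}$, rewritten as a $\fU\circ\fD$ composition via (T3), acts on $M$ as $(\beta')^S_{k,\ell}\circ\beta^S_{i,j}$. The Brauer composition law splits this check into three cases according to $|\{i,j\}\cap\{k,\ell\}|$. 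An empty overlap rewrites the composite as $\eta^{S\setminus\{k,\ell\}}_{i,j}\circ(\eta')^{S\setminus\{i,j\}}_{k,\ell}$, and the required equality becomes the commutation square for $\beta$ and $\beta'$ supplied by~(a). An overlap of size one causes two edges to glue into a single through-strand, so the composite is a bijection $\iota$ in $\fM$, and compatibility is precisely condition~(b). Overlap equal to $\{i,j\}$ closes off a loop, contributing the scalar $\delta$ demanded by~(c).

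The main obstacle I anticipate is the disjoint case: one must keep careful track of the intermediate finite set as the cap-then-cup composition is rewritten as a cup-then-cap composition on the complement of the combined index set, and match the resulting labels with those appearing in the commutation square for the two operations. A secondary, more routine obstacle is the justification of the presentation of $\fU$ (and dually $\fD$); once this is in place, the remainder of the argument is a straightforward case analysis driven by Proposition~\ref{prop:tri-comp}.
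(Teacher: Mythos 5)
Your proposal follows exactly the same route as the paper's proof: construct $\fU$- and $\fD$-structures from $\beta$ and $\beta'$ (using that upwards Brauer diagrams factor as a bijection followed by independent caps), then apply Proposition~\ref{prop:tri-comp} with generating sets of caps and cups, verifying compatibility in the three cases $|\{i,j\}\cap\{k,\ell\}| = 0,1,2$ via conditions (a), (b), (c) respectively. The only cosmetic difference is that you phrase the construction of the $\fU$-structure as establishing a presentation of $\fU$ over $\FB$, whereas the paper builds the action directly and argues order-independence from self-commutation; these amount to the same thing.
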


\begin{proof}
We claim that giving a $\fU$-structure on $M$ is equivalent to giving a self-commuting symmetric $(0,2)$-operation. First, suppose that $M$ has a $\fU$-structure. For any set $S$ and distinct elements $i,j \in S$, we define $\beta^S_{i,j}$ to be the action of $\eta^S_{i,j}$ on $M$. Then $\beta$ is a symmetric operation by construction and commutes with itself (if we compose such morphisms, the result does not depend on the order in which we pair off the elements).

Conversely, suppose that $M$ has a self-commuting symmetric $(0,2)$-operation $\beta$. We use $\beta$ to construct a $\fU$-structure on $M$. A $\fU$-morphism $\phi \colon S \to T$ can be factored as a bijection $\sigma \colon S \to \phi(S)$ followed by morphisms of the form $\eta^U_{i,j}$ where $i,j$ are distinct. We define $M_{\phi} \colon M(S) \to M(T)$ to be the composition $M_\sigma \colon M(S) \to M(\phi(S))$ with the corresponding composition of maps given by $\beta$ coming from the factorization; since $\beta$ is self-commuting the order of the factorization does not affect the result, and since it is symmetric the order of the elements $i,j$ at each stage also does not affect the result. Given another $\fU$-morphism $\psi \colon T \to U$, the functoriality $M_{\psi \phi} = M_{\psi} M_{\phi}$ follows from the naturality condition on operations (we omit the details).

Similarly, giving a $\fD$-structure on $M$ is equivalent to giving a self-commuting symmetric $(2,0)$-operation. Thus $\beta$ and $\beta'$ define $\fU$- and $\fD$-structures on $M$.

Let $\cU$ be the class of morphisms in $\fU$ isomorphic to $\eta^S_{i,j}$ for some $S$, $i$, and $j$, and define $\cD$ similarly using $\eta'$. One easily sees that $\cU$ generates $\fU$ and $\cD$ generates $\fD$. Thus, by Proposition~\ref{prop:tri-comp}, it suffices to show that $(\phi, \psi)$ is compatible for $\phi \in \cU$ and $\psi \in \cD$ with $\psi \circ \phi$ defined. Let $\phi = \eta^S_{i,j}$ and $\psi = (\eta')^S_{k,\ell}$. There are three cases to consider depending on the cardinality $n$ of $\{i,j\} \cap \{k,\ell\}$. 

First suppose $n=0$. Then
\begin{displaymath}
\psi \circ \phi = (\eta')^S_{k,\ell} \circ \eta^S_{i,j} = \eta^{S \setminus \{k,\ell\}}_{i,j} \circ (\eta')^{S \setminus \{i,j\}}_{k,\ell},
\end{displaymath}
where the second equality comes from the following composition of Brauer diagrams:
\[
\begin{tikzpicture}[baseline={([yshift=-.5ex]current bounding box.center)}]
\node at (2,2) (a3) {\tiny $i$};
\node at (3,2) (a4) {\tiny $j$};
\node [node] at (0,1) (b1) {};
\node [node] at (1,1) (b2) {};
\node [node] at (2,1) (b3) {};
\node [node] at (3,1) (b4) {};      
\node at (0,0) (c1) {\tiny $k$};
\node at (1,0) (c2) {\tiny $\ell$};
\draw[thick, orange] (c1) to (b1);
\draw[thick, orange] (c2) to (b2);
\draw[thick, orange] (b3) to (a3);
\draw[thick, orange] (b4) to (a4);
\draw[thick, orange] (b1) to[out=20, in=160] (b2);
\draw[thick, orange] (b3) to[out=-20, in=-160] (b4);      
\end{tikzpicture}
\quad = \quad
\begin{tikzpicture}[baseline={([yshift=-.5ex]current bounding box.center)}]
\node at (0,2) (a3) {\tiny $i$};
\node at (1,2) (a4) {\tiny $j$};
\node at (0,0) (c1) {\tiny $k$};
\node at (1,0) (c2) {\tiny $\ell$};
\draw[thick, orange] (c1) to[out=20, in=160] (c2);
\draw[thick, orange] (a3) to[out=-20, in=-160] (a4);      
\end{tikzpicture}.
\]
We thus have
\begin{displaymath}
(\psi \circ \phi)_*=\beta^{S \setminus \{k,\ell\}}_{i,j} \circ (\beta')^{S \setminus \{i,j\}}_{k,\ell}=(\beta')^S_{k,\ell} \circ \beta^S_{i,j} = \psi_* \circ \phi_*,
\end{displaymath}
where the first equality uses the above computation and the second uses condition~(a). Thus $(\phi,\psi)$ is compatible.

Now suppose $n=1$, and, without loss of generality, $j=k$. Then
\begin{displaymath}
\psi \circ \phi = (\eta')^S_{j,\ell} \circ \eta^S_{i,j} = \iota_{i,\ell}^{S \setminus j}
\end{displaymath}
where the second equality comes from the following composition of Brauer diagrams:
\[
\begin{tikzpicture}[baseline={([yshift=-.5ex]current bounding box.center)}]
\node at (2,2) (a3) {\tiny $i$};
\node [node] at (0,1) (b1) {};
\node [node] at (1,1) (b2) {};
\node [node] at (2,1) (b3) {};
\node at (0,0) (c1) {\tiny $\ell$};
\draw[thick, orange] (c1) to (b1);
\draw[thick, orange] (b3) to (a3);
\draw[thick, orange] (b1) to[out=20, in=160] (b2);
\draw[thick, orange] (b2) to[out=-20, in=-160] (b3);
\end{tikzpicture}
\quad = \quad
\begin{tikzpicture}[baseline={([yshift=-.5ex]current bounding box.center)}]
\node at (0,2) (a3) {\tiny $i$};
\node at (0,0) (c1) {\tiny $\ell$};
\draw[thick, orange] (c1) to (a3);
\end{tikzpicture}.
\]
We thus have
\begin{displaymath}
(\psi \circ \phi)_* = \iota_{i,\ell}^{S \setminus j} = (\beta')^S_{j,\ell} \circ \beta^S_{i,j} = \psi_* \circ \phi_*
\end{displaymath}
by (b), which establishes the compatibility.

Finally, suppose $n=2$. Then
\begin{displaymath}
\psi \circ \phi = (\eta')^S_{i,j} \circ \eta^S_{j,i} = \delta \cdot {\rm id}
\end{displaymath}
using the composition
\[
\begin{tikzpicture}[baseline={([yshift=-.5ex]current bounding box.center)}]
\node [node] at (0,0) (a1) {};
\node [node] at (1,0) (a2) {};
\draw[thick, orange] (a1) to[out=20, in=160] (a2);
\draw[thick, orange] (a1) to[out=-20, in=-160] (a2);
\end{tikzpicture}
\quad = \quad \delta, 
\]
and so
\begin{displaymath}
(\psi \circ \phi)_* = \delta = (\beta')^S_{i,j} \circ \beta^S_{j,i} = \psi_* \circ \phi_*
\end{displaymath}
by (c), which establishes the compatibility.
\end{proof}

\subsection{The comparison theorem}

Fix $\delta \in \bk$. If $M$ is a representation of $\ul{\fsp}(\bV \oplus \bV^*)$ given by data $(a,b,b')$ then $a$ defines a representation of $\ul{\fgl}(\bV)$ on $M$. We say that $M$ is \defi{$\delta$-standard} if the representation of $\ul{\fgl}(\bV)$ is $\delta$-standard (see \S \ref{ss:gl-fb}). We let $\Rep_{\delta}(\ul{\fsp}(\bV \oplus \bV^*))$ be the category of $\delta$-standard representations.

We define a functor
\[
  \Phi \colon \Mod_{\fG(2\delta)} \to \Rep_\delta(\ul{\fsp}( \bV \oplus \bV^*) )
\]
as follows. Let $M$ be a representation of $\fG(2\delta)$. To define $\Phi(M)$, we only need to define the operations $\alpha$, $\omega$, $\beta$ and $\beta'$. First, $M$ is an $\FB$-module by restriction, and we choose $\alpha$ and $\omega$ as in \S\ref{ss:gl-fb} so that the result is $\delta$-standard. The operations $\beta$ and $\beta'$ are defined using the morphisms $\eta$ and $\eta'$ as in \S\ref{ss:brauer}. Then $\Phi(M)$ is indeed an object of $\Rep_\delta(\ul{\fsp}(\bV \oplus \bV^*))$ by Proposition~\ref{prop:sp-gl}. For a morphism $f \colon M \to N$, we let $\Phi(f)$ be the same morphism of the underlying $\FB$-modules.

The following is the main result of this section:

\begin{theorem} \label{thm:fB=B}
The functor $\Phi$ defines a natural isomorphism of categories
\begin{displaymath}
\Mod_{\fG(2\delta)} \cong \Rep_{\delta}(\ul{\fsp}(\bV \oplus \bV^*)).
\end{displaymath}
\end{theorem}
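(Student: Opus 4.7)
The plan is to construct an explicit two-sided inverse functor $\Psi$ to $\Phi$. Given a $\delta$-standard $\ul{\fsp}(\bV \oplus \bV^*)$-module $M$ with operations $(\alpha, \omega, \beta, \beta')$ from \S\ref{ss:sp-sp}, the $\delta$-standard hypothesis fixes $\omega = \delta \cdot \id$ and $\alpha^S_{i,j} = (\iota^S_{i,j})_*$, so all nontrivial content is carried by the pair $(\beta, \beta')$. I would feed this pair into Proposition \ref{prop:brauer-op} to produce a $\fG(2\delta)$-structure on $M$, and define $\Psi(M)$ to be the resulting $\fG(2\delta)$-module.

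The crux is checking that conditions \cref{prop:sp-gl}{a}--\cref{prop:sp-gl}{e} translate, under the $\delta$-standard identification, exactly to conditions \cref{prop:brauer-op}{a}--\cref{prop:brauer-op}{c} with Brauer parameter $2\delta$. The breakdown is as follows: \cref{prop:sp-gl}{a} reduces to pairwise commutation of $\beta$ and $\beta'$, i.e.\ \cref{prop:brauer-op}{a}, because $\omega$ is scalar and $\alpha$ is the action of bijections (commutation with which follows automatically from naturality of $\beta,\beta'$); \cref{prop:sp-gl}{b} becomes the identity $(\iota^{S\setminus i}_{j,k})_* \circ (\iota^{S\setminus k}_{i,j})_* = (\iota^{S\setminus j}_{i,k})_*$, which is immediate from the composition of bijections; \cref{prop:sp-gl}{c} follows from naturality of $\beta$ and $\beta'$; \cref{prop:sp-gl}{d} is verbatim \cref{prop:brauer-op}{b}; and \cref{prop:sp-gl}{e}, once $\omega = \delta \cdot \id$ is substituted, is precisely \cref{prop:brauer-op}{c} with parameter $2\delta$. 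This last identification is the one substantive computation and explains why the Brauer parameter doubles; the factor of $2$ is the same one appearing in the proof of \cref{prop:sp-curry}{d}, coming from the symmetrization relating $\Sym^2$ and $\Div^2$.

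Running the argument in both directions shows that $\Phi$ is well-defined and that $\Phi \circ \Psi = \id$ and $\Psi \circ \Phi = \id$ on objects. For morphisms, I would argue as follows. A morphism of $\fG(2\delta)$-modules is an $\FB$-natural transformation commuting with $\beta$ and $\beta'$, since $\fG(2\delta)$ is generated by $\FB$ together with the $\eta$ and $\eta'$ morphisms of \S\ref{ss:brauer}. A morphism of $\ul{\fsp}(\bV \oplus \bV^*)$-modules is an $\FB$-natural transformation commuting with $a, b, b'$, equivalently with $\alpha, \omega, \beta, \beta'$. But commutation with $\omega = \delta \cdot \id$ is automatic, and commutation with $\alpha = (\iota^S_{i,j})_*$ is automatic for $\FB$-natural maps. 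Hence the two notions of morphism coincide, and $\Phi$ induces a bijection on Hom-sets.

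The expected obstacle is not conceptual but organizational: keeping track of the many small natural identifications (particularly for \cref{prop:sp-gl}{c}, where the bijection $\iota^S_{i,j}$ acts on one of the two subscript slots of $\beta$) and verifying that each condition of Proposition \ref{prop:sp-gl} genuinely collapses to the Brauer-category side once $\alpha$ and $\omega$ are specialized. The only genuinely algebraic input is the factor-of-$2$ calibration between \cref{prop:sp-gl}{e} and \cref{prop:brauer-op}{c}.
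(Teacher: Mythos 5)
Your proposal is correct and takes essentially the same approach as the paper: the paper's proof is the one-line statement that ``the inverse of $\Phi$ is defined by reversing the steps in the definition of $\Phi$, and this is well-defined by Proposition~\ref{prop:brauer-op}.'' Your write-up simply makes explicit the dictionary the paper leaves implicit — that under the $\delta$-standard specialization $\omega = \delta\cdot\id$, $\alpha^S_{i,j}=(\iota^S_{i,j})_*$, conditions \cref{prop:sp-gl}{a}--\cref{prop:sp-gl}{e} collapse to conditions \cref{prop:brauer-op}{a}--\cref{prop:brauer-op}{c} with Brauer parameter $2\delta$, and that the morphism conditions likewise agree — and this is exactly the intended content of the paper's terse proof.
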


\begin{proof}
The inverse of $\Phi$ is defined by reversing the steps in the definition of $\Phi$. This is well-defined by Proposition~\ref{prop:brauer-op}.
\end{proof}

\begin{remark}
  If 2 is a zerodivisor in $\bk$, then $\delta$ cannot generally be recovered from $2\delta$. In particular, if $2\delta=2\delta'$, we see that there is an equivalence of the form
  \[
    \Rep_\delta(\ul{\fsp}(\bV \oplus \bV^*)) \cong    \Rep_{\delta'}(\ul{\fsp}(\bV \oplus \bV^*)). \qedhere
  \]
\end{remark}

\section{The Witt algebra} \label{s:witt}

\subsection{Currying} \label{ss:witt-curry}

Let $V$ be a finite-dimensional vector space with basis $\{\xi_i\}$. The \defi{Witt algebra} on $V$, denote $W(V)$, is the Lie algebra of $\bk$-linear derivations of the polynomial ring $\bk[\xi_i]$. Thus it is spanned by elements $f \partial_i$ where $f$ is a polynomial in $\{\xi_j\}$ and $\partial_i$ is the partial derivative with respect to $\xi_i$, and the bracket is given by
\[
  [f \partial_i, g\partial_j] = f \frac{\partial g}{\partial \xi_i} \partial_j - g \frac{\partial f}{\partial \xi_j} \partial_i.
\]
We have a canonical isomorphism of vector spaces $W(V) = \bigoplus_{n \ge 0} \Sym^{n}(V) \otimes V^*$.

\begin{remark}
The algebra of derivations of the ring $\bk[z,z^{-1}]$ of Laurent polynomials is also sometimes referred to as the Witt algebra. We do not know of a good analogue of this Lie algebra in the multivariate case.
\end{remark}

A linear map $\mu \colon W(V) \otimes M \to M$ is the same data as linear maps
\[
  a^{(n)} \colon \Sym^{n+1} V \otimes M \to V \otimes M, \qquad n \ge -1.
\]
For notational simplicity, we package these together for all $n \ge -1$ into a single map
\[
  a \colon \Sym V \otimes M \to V \otimes M.
\]
Fix a map $a$ as above. We define a map
\[
  a' \colon \Sym V \otimes \Sym V \otimes M \to V \otimes V \otimes M
\]
as the composition
\begin{align*}
  \Sym V \otimes \Sym V \otimes M &\xrightarrow{{\rm id} \otimes \Delta \otimes {\rm id}} \Sym V \otimes V \otimes \Sym V \otimes M\\
  &\xrightarrow{\tau \otimes {\rm id} \otimes {\rm id}}  V \otimes \Sym V \otimes \Sym V \otimes M\\
                                  &\xrightarrow{{\rm id} \otimes m \otimes {\rm id}} V \otimes \Sym V \otimes M\\
  &\xrightarrow{{\rm id} \otimes a} V \otimes V \otimes M,
\end{align*}
where $\Delta \colon \Sym V \to V \otimes \Sym V$ is the comultiplication given by $f \mapsto \sum_{i=1}^n x_i \otimes \frac{\partial f}{\partial x_i}$, $m \colon \Sym V \otimes \Sym V \to \Sym V$ is the multiplication map, and $\tau$ is the usual switching map. We define $a'' = \tau a' \tau$.

\begin{proposition} \label{prop:witt-curry}
  Let $\mu$ and $a$ be corresponding linear maps as above. Then $\mu$ defines a representation of $W$ if and only if $[a_1,a_2]=a'-a''$ holds as maps $\Sym V \otimes \Sym V \otimes M \to V \otimes V \otimes M$.
\end{proposition}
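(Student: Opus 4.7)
The plan is to follow the same strategy as the proof of Proposition~\ref{prop:glid}: fix a basis of $V$, write both sides of the proposed identity as explicit sums indexed by pairs of basis vectors, and then match them coefficient by coefficient.

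Fix a basis $\{\xi_i\}_{1 \le i \le n}$ of $V$. The correspondence between $\mu$ and $a$ takes the explicit form
\[
a(f \otimes x) = \sum_{i=1}^n \xi_i \otimes \mu(f\partial_i)(x)
\]
for $f \in \Sym V$ and $x \in M$; this is a bijection because $W(V) = \bigoplus_{n \ge 0} \Sym^n(V) \otimes V^*$ is spanned by the elements $f\partial_i$. A direct computation from this formula gives
\[
[a_1,a_2](f \otimes g \otimes x) = \sum_{i,j} \xi_i \otimes \xi_j \otimes [\mu(f\partial_i), \mu(g\partial_j)](x).
\]
Tracing through the four-step composition defining $a'$, using $\Delta(g) = \sum_i \xi_i \otimes \partial_i g$ and the subsequent multiplication $m$, yields
\[
a'(f \otimes g \otimes x) = \sum_{i,j} \xi_i \otimes \xi_j \otimes \mu\bigl((f \cdot \partial_i g)\partial_j\bigr)(x),
\]
and then $a'' = \tau a' \tau$ produces
\[
a''(f \otimes g \otimes x) = \sum_{i,j} \xi_i \otimes \xi_j \otimes \mu\bigl((g \cdot \partial_j f)\partial_i\bigr)(x).
\]

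Since the tensors $\xi_i \otimes \xi_j$ form a basis of $V \otimes V$, the identity $[a_1,a_2] = a' - a''$ is equivalent to
\[
[\mu(f\partial_i),\mu(g\partial_j)] = \mu\bigl((f \cdot \partial_i g)\partial_j - (g \cdot \partial_j f)\partial_i\bigr) = \mu([f\partial_i, g\partial_j])
\]
for all $f, g, i, j$, where the second equality is the Witt bracket formula from \S\ref{ss:witt-curry}. Since the elements of the form $f \partial_i$ span $W(V)$, this holds for every pair if and only if $\mu$ preserves the Lie bracket, i.e., defines a representation of $W(V)$. Both directions of the equivalence follow at once.

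The main bookkeeping obstacle will be the careful unwinding of $a'$ through its four-step composition, in particular tracking which factor plays which role after the twist $\tau$ and the multiplication $m$ are applied, and confirming that the coefficient $(f \cdot \partial_i g)\partial_j$ emerges in the correct slot before the final application of $a$. Once $a'$ and $a''$ are written in coordinates, the comparison with $[a_1,a_2]$ is purely mechanical.
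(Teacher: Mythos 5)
Your proof is correct and follows essentially the same strategy as the paper: fix a basis, express $a$, $a'$, $a''$, and $[a_1,a_2]$ in coordinates, and match coefficients against the Witt bracket. The only cosmetic difference is that you carry generic elements $f,g \in \Sym V$ through the four-step composition rather than specializing to monomials $\xi^\alpha, \xi^\beta$ as the paper does, which lets you recognize the output of $a'$ directly as $\mu\bigl((f\cdot \partial_i g)\partial_j\bigr)$ and bypass the explicit exponent-vector bookkeeping; this is a mild streamlining, not a different argument.
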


\begin{proof}
  Pick $\xi^\alpha,\xi^\beta \in \Sym V$ and $x \in M$. Let $\eps_i$ be the exponent vector which is 1 in position $i$ and 0 elsewhere. Then 
  \[
    [a_1,a_2](\xi^\alpha \otimes \xi^\beta \otimes x) = \sum_{i,j} \xi_j \otimes \xi_i \otimes [\xi^\alpha \partial_j, \xi^\beta \partial_i] x,
  \]
  and
  \[
    [\xi^\alpha \partial_j, \xi^\beta \partial_i] = \beta_j \xi^{\alpha+\beta-\eps_j} \partial_i - \alpha_i \xi^{\alpha+\beta-\eps_i} \partial_j.
  \]
  Next, we compute the effect of $a'$ via the maps it is a composition of:
  \begin{align*}
    \xi^\alpha \otimes \xi^\beta \otimes x &\mapsto \sum_j \beta_j \xi^\alpha \otimes \xi_j \otimes \xi^{\beta-\eps_j} \otimes x\\
                                       &\mapsto \sum_j \beta_j \xi_j \otimes  \xi^{\alpha+\beta-\eps_j} \otimes x\\
    &\mapsto \sum_{i,j} \beta_j \xi_j \otimes \xi_i \otimes (\xi^{\alpha+\beta-\eps_j} \partial_i) x.
  \end{align*}
  So
  \[
    a''(\xi^\alpha \otimes \xi^\beta \otimes x) = \sum_{i,j} \alpha_j \xi_i \otimes \xi_j \otimes (\xi^{\alpha + \beta - \eps_j} \partial_i) x.
    \]
    Hence we see that $[a_1,a_2]=a'-a''$. On the other hand, if $[a_1,a_2]=a'-a''$, then the above calculations show that
    \[
      [\xi^\alpha \partial_j, \xi^\beta \partial_i] x = (\xi^{\alpha} \partial_j)(\xi^\beta \partial_i)x - (\xi^\beta \partial_i) (\xi^{\alpha} \partial_j) x
    \]
    which shows that $\mu$ defines a Lie algebra action on $M$.
\end{proof}

\begin{definition}
Given an object $V$ of a tensor category $\cC$, we define a module over the \defi{curried Witt algebra} $\ul{W}(V)$ to be an object $M$ together with a map $a \colon \Sym V \otimes M \to V \otimes M$ such that $[a_1,a_2] = a'-a''$ with notation as in Proposition~\ref{prop:witt-curry}.
\end{definition}

\begin{remark}
There are several variants of the above definition one can consider. For instance, one can consider the Lie subalgebra $W^+(V)$ of $W(V)$ consisting of derivations $f \partial_i$ where $f$ has no constant term; it has a curried form $\ul{W}^+(V)$ similar to that for $W(V)$. One can also define a curried algebra $\ul{W}(V^*)$ by considering maps $V \otimes M \to \Div(V) \otimes M$.
\end{remark}

\begin{proposition} \label{prop:witt-gl}
Let $(M,a)$ be a representation of $\ul{W}(V)$. Then $a^{(0)}$ defines a representation of $\ul{\fgl}(V)$ on $M$.
\end{proposition}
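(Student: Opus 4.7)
The plan is to derive the $\ul{\fgl}(V)$ defining relation $[a^{(0)}_1,a^{(0)}_2] = \tau(a^{(0)}_1 - a^{(0)}_2)$ by pre-composing the Witt relation $[a_1,a_2] = a' - a''$ with the natural inclusion $V \otimes V \otimes M \hookrightarrow \Sym V \otimes \Sym V \otimes M$ coming from $V = \Sym^1 V \subset \Sym V$. Since $a$ restricts to $a^{(0)}$ on $V \otimes M$, and since $a_1$ and $a_2$ are built from $a$ applied to the first or second $\Sym V$-factor, the restriction of $[a_1,a_2]$ to $V \otimes V \otimes M$ coincides tautologically with $[a^{(0)}_1,a^{(0)}_2]$ regarded as an endomorphism of $V \otimes V \otimes M$.

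The substantive content is to compute $a'$ and $a''$ on $V \otimes V \otimes M$. For $a'$, I would trace a pure tensor $v \otimes w \otimes x$ with $v, w \in V$ through the four-map composition that defines $a'$. The key observation is that $\Delta \colon \Sym V \to V \otimes \Sym V$ restricted to $V$ factors through $V \otimes \bone$ via $\id_V \otimes \eta$, where $\eta \colon \bone \to \Sym V$ is the unit — concretely $\Delta(\xi_j) = \xi_j \otimes 1$ in the notation of Proposition~\ref{prop:witt-curry}. After the swap $\tau$ in the second step and the left unitality of multiplication in the third step, the composition collapses to $w \otimes a^{(0)}(v \otimes x)$, and a short bookkeeping check identifies this with $\tau a^{(0)}_1 (v \otimes w \otimes x)$. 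The identity $a'' = \tau a' \tau$ then immediately gives $a''|_{V \otimes V \otimes M} = \tau a^{(0)}_2$.

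Substituting these identities into the restricted Witt relation yields $[a^{(0)}_1, a^{(0)}_2] = \tau a^{(0)}_1 - \tau a^{(0)}_2 = \tau(a^{(0)}_1 - a^{(0)}_2)$, which is exactly equation~\eqref{eq:glid}. Hence $a^{(0)}$ endows $M$ with the structure of a $\ul{\fgl}(V)$-module. I do not foresee any genuine obstacle: the whole argument amounts to the observation that, under the Hopf-algebra structure on $\Sym V$, the degree-$1$ part is primitive, so the Witt relation in bidegree $(1,1)$ degenerates to the curried $\fgl$ relation. All ingredients — the Hopf structure on $\Sym V$, the symmetry $\tau$, and the inclusion $V \hookrightarrow \Sym V$ — are available in any symmetric tensor category, so the argument is basis-free.
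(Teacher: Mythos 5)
Your proof is correct and takes the same route as the paper: the paper's argument is a one-line statement that the restrictions of $a'$ and $a''$ to $V \otimes V \otimes M$ are $\tau a^{(0)}_1$ and $\tau a^{(0)}_2$, respectively, and you have simply unwound the four-map composition defining $a'$ to verify that identity. One small clarification: the map $\Delta \colon \Sym V \to V \otimes \Sym V$ of \S\ref{ss:witt-curry} is the $V \otimes \Sym V$-component of the Hopf comultiplication rather than the comultiplication itself, but the formula $\Delta(\xi_j) = \xi_j \otimes 1$ you use is the correct consequence, and the computation goes through.
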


\begin{proof}
The restrictions of $a'$ and $a''$ to $V\otimes V \otimes M$, respectively, are $\tau a^{(0)}_1$ and $\tau a^{(0)}_2$, so the identity $[a^{(0)}_1,a^{(0)}_2] = \tau( a^{(0)}_1 - a^{(0)}_2 )$ follows immediately from the definition of a representation of $\ul{W}(V)$.
\end{proof}

\subsection{In species}

Let $M$ be an $\FB$-module equipped with a map
\begin{displaymath}
a \colon \Sym(\bV) \otimes M \to \bV \otimes M.
\end{displaymath}
Let $\phi$ be the symmetric $(1,\ast)$-operation associated to $a$; by $(1,\ast)$ we mean that $\phi_{A,B}=0$ unless $A$ has cardinality~1. Explicitly, for a finite set $S$, a subset $B$ of $S$, and $x \in M(S \setminus B)$, we have
\begin{displaymath}
a(t^B \otimes x) = \sum_{i \in S} t^i \otimes \phi^S_{i,B}(x).
\end{displaymath}
Let $\alpha=\phi[0]$ and $\omega=\phi[1]$ be the simple operations associated to $\phi$. Explicitly, for $S$, $B$, and $x$ as above, we have
\begin{displaymath}
a(t^B \otimes x) = \sum_{i \in B} t^i \otimes \omega^{S \setminus i}_{B \setminus i}(x) + \sum_{i \in S \setminus B} t^i \otimes \alpha^S_{i,B}(x).
\end{displaymath}
In general, $\omega^S_B$ is a map $M(S \setminus B) \to M(S)$.

\begin{proposition} \label{prop:witt-fb}
With notation as above, $a$ defines a representation of $\ul{W}(\bV)$ if and only if the following conditions hold ($S$ is a finite set and $A$ and $B$ are disjoint subsets of $S$):
\begin{enumerate}
\item The operations $\alpha$ and $\omega$ commute with themselves and each other.
\item Let $j \in B$ and $i \in S \setminus (A \cup B)$. Then $\alpha^{S \setminus i}_{j,A} \circ \alpha^{S \setminus A}_{i,B}=\alpha^{S \setminus j}_{i, A \cup B \setminus j}$.
\item Let $j \in B$. Then $\alpha^S_{j,A} \circ \omega^{S \setminus A}_B=\omega^{S \setminus j}_{A \cup B \setminus j}$.
\end{enumerate}
\end{proposition}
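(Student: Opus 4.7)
The plan is to verify the identity $[a_1, a_2] = a' - a''$ by evaluating both sides on a basis vector $t^A \otimes t^B \otimes x \in (\Sym(\bV) \otimes \Sym(\bV) \otimes M)(S)$, where $S$ is a finite set, $A, B$ are disjoint subsets of $S$, and $x \in M(S \setminus (A \cup B))$. Both sides land in $\bV \otimes \bV \otimes M$, so it suffices to compare the coefficients of $t^i \otimes t^j \otimes (\cdot)$ in $M(S \setminus \{i,j\})$ for each ordered pair of distinct $i, j \in S$.

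For the left-hand side, I would expand $a_2(t^A \otimes t^B \otimes x)$ using the explicit decomposition of $a$ in terms of $\alpha$ and $\omega$: this yields $\omega$-contributions indexed by $j \in B$ plus $\alpha$-contributions indexed by $j \in S \setminus (A \cup B)$. Applying $a_1$ then splits each such contribution further into $\omega$- or $\alpha$-parts depending on whether $i \in A$ or $i \in S \setminus (A \cup \{j\})$, producing four families of composites ($\omega\omega$, $\omega\alpha$, $\alpha\omega$, $\alpha\alpha$). The expansion of $a_2 a_1$ is obtained by the symmetric computation.

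For the right-hand side, the defining composition for $a'$ unfolds as follows: the comultiplication $\Delta(t^B) = \sum_{j \in B} t^j \otimes t^{B \setminus j}$ peels off a single index of $B$; the swap and multiplication combine to give $\sum_{j \in B} t^j \otimes t^{A \cup (B \setminus j)} \otimes x$; and the final $a$ yields an $\omega$-term when $i \in A \cup (B \setminus j)$ and an $\alpha$-term otherwise. The term $a'' = \tau a' \tau$ is expanded analogously with the roles of $A$ and $B$ interchanged.

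Comparing coefficients then proceeds by case analysis on where $i, j$ lie among $A$, $B$, and $S \setminus (A \cup B)$. When $i, j$ play interchangeable roles, the identity reduces to self- and cross-commutations of $\alpha$ and $\omega$, giving exactly condition (a). In the asymmetric cases, the contributions of $a'$ or $a''$ coming from the peeled $\Delta$-index pair up with exactly one of the composite flavors on the left: the $\alpha\alpha$-case produces identity (b), while the case that forces the peeled index into $B$ (hence yields an $\omega$-factor) produces identity (c). Since each relation in (a)--(c) can be isolated by choosing $S, A, B$ suitably small, the reasoning is reversible, giving both directions of the proposition. The main obstacle is bookkeeping: there are roughly nine configurations for $(i, j)$ and four flavors of composites per side, so one must carefully match them up; the cleanest approach is to tabulate contributions by the pair (type on the left, type on the right) and verify the required cancellations case by case.
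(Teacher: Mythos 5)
Your plan is correct and follows essentially the same route as the paper: both sides of $[a_1,a_2]=a'-a''$ are expanded on $t^A \otimes t^B \otimes x$, and coefficients of $t^j \otimes t^i$ are compared by a case analysis on where $i$ and $j$ sit relative to $A$, $B$, and $S\setminus(A\cup B)$. The only organizational difference is that you split into $\alpha$/$\omega$ flavors from the outset (yielding the $\omega\omega,\omega\alpha,\alpha\omega,\alpha\alpha$ families), whereas the paper keeps everything in terms of the unifying non-simple operation $\phi$ and obtains four equations indexed simply by whether $i\in A$ and $j\in B$, only then splitting these into (a)--(c); the latter bundling reduces the number of cases one has to tabulate, but the content and the matchings you describe (commutations giving (a), $\alpha\alpha$ giving (b), the $\omega$-factor case giving (c), the remaining case being automatic by naturality) agree with the paper's.
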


\begin{proof}
Let $A$ and $B$ be disjoint subsets of $S$ and let $x \in M(S \setminus (A \cup B))$. Then we have
\begin{align*}
a_1a_2(t^A \otimes t^B \otimes x)
&= \sum_{i \in S \setminus A} \sum_{j \in S \setminus i} t^j \otimes t^i \otimes \phi^{S \setminus i}_{j, A}(\phi^{S \setminus A}_{i, B}(x)) \\
a_2a_1(t^A \otimes t^B \otimes x)
&= \sum_{j \in S \setminus B} \sum_{i \in S \setminus j} t^j \otimes t^i \otimes \phi^{S \setminus j}_{i, B}(\phi^{S \setminus B}_{j, A}(x)).
\end{align*}
Next, we compute $a'$ as a composition of maps:
\begin{align*}
t^A \otimes t^B \otimes x
&\mapsto \sum_{j \in B} t^A \otimes t^j \otimes t^{B \setminus j} \otimes x\\
&\mapsto \sum_{j \in B} t^j \otimes t^{A \cup B \setminus j} \otimes x\\
&\mapsto \sum_{j \in B} \sum_{i \in S \setminus j} t^j \otimes t^i \otimes \phi^{S \setminus j}_{i, A \cup B \setminus j}(x).
\end{align*}
Similarly,
\begin{align*}
a''(t^A \otimes t^B \otimes x)
&= \sum_{i \in A} \sum_{j \in A \setminus i} t^j \otimes t^i \otimes \phi^{S \setminus i}_{j, A \cup B \setminus i}(x).
\end{align*}
Equating coefficients in the equation $[a_1,a_2]=a'-a''$ we find the following (for distinct $i, j\in S$):
\begin{enumerate}[(i)]
\item If $i \not\in A$ and $j \not\in B$ then $\phi^{S \setminus i}_{j,A} \circ \phi^{S \setminus A}_{i,B}=\phi^{S \setminus j}_{i,B} \circ \phi^{S \setminus B}_{j,A}$.
\item If $i \not\in A$ and $j \in B$ then $\phi^{S \setminus i}_{j,A}\circ \phi^{S \setminus A}_{i,B}=\phi^{S \setminus j}_{i,A \cup B \setminus j}$.
\item If $i \in A$ and $j \not\in B$ then $\phi^{S \setminus j}_{i,B}\circ \phi^{S \setminus B}_{j,A}=\phi^{S \setminus i}_{j, A \cup B \setminus i}$.
\item If $i \in A$ and $j \in B$ then $\phi^{S \setminus j}_{i,A \cup B \setminus j}=\phi^{S \setminus i}_{j, A \cup B \setminus i}$.
\end{enumerate}
Statement (i) is equivalent to (a); statement (ii) is equivalent to the conjunction of (b) and (c); statement (iii) is equivalent to statement (ii); and statement (iv) is automatic. The result follows.
\end{proof}

\subsection{The restricted partition category} \label{ss:res-part}

Let $\fG=\fG(\delta)$ be the partition category with parameter $\delta$. The objects of this category are finite sets. The space $\Hom_{\fG}(S,T)$ of morphisms is the vector space spanned by partition diagrams from $S$ to $T$; such a diagram is a set-partition of $S \amalg T$. For the definition of composition (and additional details), see \cite[\S 6]{brauercat1}. We note that the composition law depends on the parameter $\delta$.

A partition diagram $S \to T$ is called upwards if each part contains at least one element of $T$ and at most one element of $S$. The upwards partition category $\fU$ is the subcategory of $\fG$ containing all objects and where $\Hom_{\fU}(S,T)$ is spanned by upwards diagrams. There is a similarly defined downwards partition category $\fD$. The intersection $\fM$ of $\fU$ and $\fD$ is the linearization of $\FB$. The pair $(\fU, \fD)$ is a triangular structure on $\fG$, see \cite[Proposition~6.3]{brauercat1}. (Once again, note that \cite{brauercat1} works in characteristic~0, but this statement and its proof hold in general.)

We say that a partition diagram from $S$ to $T$ is \defi{restricted} if each part contains at most one element of $S$. We define the \defi{restricted partition category} $\fG^r=\fG^r(\delta)$ to be the subcategory of $\fG$ with all objects and where the $\Hom$ spaces are spanned by restricted partition diagrams. One readily verifies that this is indeed a subcategory of $\fG$. We let $\fU^r$ and $\fD^r$ be the intersections of $\fU$ and $\fD$ with $\fG^r$. One easily verifies that $(\fU^r, \fD^r)$ is a triangular structure on $\fG^r$.

Suppose that $M$ is a $\fG$-module. Restricting to $\FB \subset \fG$, we can regard $M$ as an $\FB$-module. Let $S$ be a finite set, let $A$ be a subset of $S$, and let $i \in S \setminus A$. We have a morphism $\eta^S_{i,A} \colon S \setminus A \to S \setminus i$ in $\fG$ corresponding to the diagram in which $A \cup \{i\}$ forms a single part, and the remaining diagram is the identity. This induces a linear map
\begin{displaymath}
\alpha^S_{i,A} \colon M(S \setminus A) \to M(S \setminus i).
\end{displaymath}
One easily sees that $\alpha$ is a simple symmetric $(1,\ast)$-operation on $M$. Similarly, we have a morphism $\zeta^S_A \colon S \setminus A \to S$ in $\fG$ in which $A$ forms a single part and the remaining diagram is the identity, and this induces a linear map
\begin{displaymath}
\omega^S_A \colon M(S \setminus A) \to M(S).
\end{displaymath}
Again, one verifies that $\omega$ is a symmetric $(0,\ast)$-operation on $M$. Using the rule for composition in $\fG$, one sees that $\sigma$ and $\omega$ satisfy conditions (a), (b), and (c) from Proposition~\ref{prop:witt-fb}, as well as the following:
\begin{enumerate} \setcounter{enumi}{3}
\item Let $i,j \in S$ be distinct, and put $A=\{j\}$. Then $\alpha^S_{i,A}=(\iota^S_{i,j})_*$. 
\item We have $\omega^S_{\emptyset}=\delta$.
\end{enumerate}
Since the $\eta$ and $\zeta$ morphisms generate $\fG$, we see that that the operations $\alpha$ and $\omega$ completely determine the $\fG$-structure on $M$. The following proposition shows that the above conditions exactly characterize the operations we see in this manner:

\begin{proposition} \label{prop:res-part-op}
Let $M$ be an $\FB$-module equipped with a simple symmetric $(1,\ast)$-operation $\alpha$ and a symmetric $(0,\ast)$-operation $\omega$ satisfying (a), (b), and (c) from Proposition~\ref{prop:witt-fb} and (d) and (e) above. Then $M$ carries a unique $\fG^r$-structure inducing $\alpha$ and $\omega$.
\end{proposition}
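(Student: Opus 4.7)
The plan is to mirror the proof of Proposition~\ref{prop:brauer-op}, using the triangular structure $(\fU^r,\fD^r)$ on $\fG^r$ and reducing compatibility to a generator-wise check via Proposition~\ref{prop:tri-comp}. I first identify generators. Since every upward partition diagram is automatically restricted, $\fU^r=\fU$, and a generating set beyond $\FB$ consists of the morphisms $\zeta^S_A$ for $|A|\ge 1$ (corresponding to $\omega^S_A$) and $\eta^S_{i,A}$ for $|A|\ge 1$ (corresponding to $\alpha^S_{i,A}$). In $\fD^r$, every block is of the form $\{s\}$ or $\{s,t\}$; the latter are $\FB$ relabelings, so the only new generators are the delete-source morphisms $\eta^S_{i,\emptyset}$, corresponding to $\alpha^S_{i,\emptyset}$. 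Condition (d) identifies $\alpha^S_{i,\{j\}}$ with the bijection $\iota^S_{i,j}$ and so guarantees consistency of the two viewpoints on morphisms of $\fM$.

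Next I would construct $\fU^r$- and $\fD^r$-structures from the given operations. Any $\fU^r$-morphism factors, after an $\FB$-bijection, as a composition of $\zeta$- and $\eta$-generators, one per nontrivial block; the self- and mutual-commutation assertions in (a) among $\omega$ and $\alpha$ show that reordering the blocks produces the same composite operation, so the assignment $M_\phi$ is well-defined. Functoriality follows from naturality together with conditions (b) and (c), applied whenever two generators collapse to a single generator. A $\fD^r$-structure arises similarly, using only $\alpha^S_{i,\emptyset}$ and its self-commutation.

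By Proposition~\ref{prop:tri-comp}, it then remains to check compatibility for pairs $(\phi,\psi)$ with $\phi$ a $\fU^r$-generator and $\psi$ a $\fD^r$-generator such that $\psi\circ\phi$ is defined. The diagrammatic composition falls into a handful of cases that I would work through in sequence: (i) $\phi=\zeta^S_A$ with $k\notin A$ reduces to the $\alpha$--$\omega$ commutation in (a); (ii) $\phi=\zeta^S_A$ with $k\in A$ and $|A|\ge 2$ yields $\omega^{S\setminus k}_{A\setminus k}$, which is condition (c); (iii) the subcase $A=\{k\}$ closes into a loop and is condition (c) combined with condition (e); (iv) $\phi=\eta^S_{j,A}$ with $|A|\ge 1$ and $k\notin A\cup\{j\}$ reduces to the $\alpha$--$\alpha$ commutation in (a); (v) $\phi=\eta^S_{j,A}$ with $|A|\ge 2$ and $k\in A$ matches condition (b); (vi) the subcase $A=\{k\}$ reduces to naturality of $\alpha$ together with condition (d). The main obstacle will be the bookkeeping in tracing how blocks merge at the intermediate level of $\psi\circ\phi$; once this is carried out, each case reduces directly to one of conditions (a)--(e), making the argument routine if lengthy.
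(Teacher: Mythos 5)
Your proposal is correct and follows essentially the same route as the paper: construct the $\fU^r$- and $\fD^r$-structures from $\alpha$ and $\omega$ using (a) for well-definedness and (b), (c) for functoriality, check agreement on $\fM$ via (d), and reduce the compatibility check to generator pairs via Proposition~\ref{prop:tri-comp}. Your case analysis for the compatibility check matches the paper's; the one thing you do slightly more carefully is the closed-loop subcase $\phi=\zeta^S_{\{k\}}$, $\psi=\eta^S_{k,\emptyset}$, where you explicitly invoke condition (e) (the paper subsumes this under its ``follows from (c)'' without mentioning that (e) is needed to evaluate the resulting $\omega_\emptyset$). Your extra case (vi) is harmless but unnecessary: for $|A|=1$ the morphism $\eta^S_{i,A}$ already lies in $\fM\subseteq\fD$, so compatibility with any downward generator is automatic from the general framework of \S\ref{s:tri} and need not be verified by hand.
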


\begin{proof}
  Suppose $M$ is given with $\alpha$ and $\omega$ as in the statement of the proposition. We first show how to construct a $\fU^r$-structure on $M$. Let $\phi \colon S \to T$ be a $\fU^r$-morphism corresponding to a partition diagram. Let $B_1,\dots,B_r$ be the blocks of this diagram such that $|B_i \cap S|=1$ and let $B_1',\dots,B_s'$ be the blocks of this diagram such that $|B_i'\cap S|=0$. Let $X_i = B_i \cap T$ and $x_i$ be the unique element of $B_i \cap S$, and let $Y_i = B_i'$ (thought of as a subset of $T$). Also set $T' \setminus (Y_1 \cup \cdots \cup Y_s)$. We have the factorization
  \[
    \phi = \eta^{(T'\setminus (X_r \cup \cdots \cup X_2))\amalg \{x_1\} }_{x_1, X_1} \cdots \eta^{T' \amalg \{x_r\}}_{x_r, X_r} \zeta^{T \setminus (Y_s \cup \cdots \cup Y_2)}_{Y_1} \cdots \zeta^{T \setminus Y_s}_{Y_{s-1}} \zeta^T_{Y_s}
  \]
We define $M_{\phi} \colon M(S) \to M(T)$ by replacing each $\zeta$ above by $\omega$ and each $\eta$ by $\alpha$. By (a), $\alpha$ and $\omega$ self-commute so the order of the blocks does not affect the definition of $M_{\phi}$. Furthermore, since $\alpha$ and $\omega$ commute with each other, we could have alternatively factored $\phi$ as a product of $\eta$ and $\zeta$ in any order. This fact, together with conditions (b) and (c), say that for any other restricted upwards partition diagram $\psi \colon T\to U$, we have $M_{\psi} M_{\phi} = M_{\psi \phi}$.

We can do the same to give a $\fD^r$-structure on $M$. This is like the above case but we only use $\eta$ such that the $X_i$ have size at most 1. Condition (d) tells us that the restriction of the $\fD^r$ and $\fU^r$ structures to $\FB$ agree with the usual $\FB$-action, so in particular they agree with each other.

Let $\cU$ be the class of morphisms in $\fU$ isomorphic to $\eta^S_{i, A}$ for some $S$, $i$, and $A$ (with $|A|>1$), or $\alpha^S_A$ for some $S$ and $A$, and define $\cD$ similarly using $\eta^S_{i, \emptyset}$. One easily sees that $\cU$ generates $\fU$ and $\cD$ generates $\fD$. Thus, by Proposition~\ref{prop:tri-comp}, it suffices to show that $(\phi, \psi)$ is compatible for $\phi \in \cU$ and $\psi \in \cD$ with $\psi \circ \phi$ defined.

Let $\psi = \eta^S_{j,\emptyset}$. First suppose $\phi = \eta^{S\amalg \{i\}}_{i, A}$ for $i \notin S$. If $j \in A$, then compatibility follows from (b), and if $j \notin A$, then compatibility follows from (a) since $\alpha$ self-commutes. Now suppose that $\phi = \zeta^S_A$. If $j \in A$, then compatibility follows from (c) and if $j \notin A$, then compatibility follows again from (a) since $\alpha$ and $\omega$ commute with each other. 
\end{proof}

\subsection{The comparison theorem}

Recall from Proposition~\ref{prop:witt-gl} that if $M$ is a $\ul{W}(\bV)$-module then, restricting the action map to $\bV \subset \Sym(\bV)$, we obtain a representation of $\ul{\fgl}(\bV)$ on $M$. We say that $M$ is \defi{$\delta$-standard} if this representation of $\ul{\fgl}(\bV)$ is $\delta$-standard. We write $\Rep_{\delta}(\ul{W}(\bV))$ for the full subcategory of $\Rep(\ul{W}(\bV))$ spanned by the $\delta$-standard representations. The following is the main result of this section:

\begin{theorem} \label{thm:witt}
We have a natural isomorphism of categories:
\begin{displaymath}
\Mod_{\fG^r(\delta)} \cong \Rep_{\delta}(\ul{W}(\bV)).
\end{displaymath}
\end{theorem}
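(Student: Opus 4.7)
The plan is to mimic the proof of Theorem~\ref{thm:fB=B}: construct a functor $\Phi \colon \Mod_{\fG^r(\delta)} \to \Rep_\delta(\ul{W}(\bV))$, and check that it is invertible by translating back and forth between operations on $\FB$-modules and the two kinds of structure using the dictionaries already set up.

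First I would define $\Phi$. Given a $\fG^r(\delta)$-module $M$, restrict along $\FB \subset \fG^r$ to get the underlying $\FB$-module, and define operations $\alpha$ and $\omega$ via the generating morphisms $\eta^S_{i,A}$ and $\zeta^S_A$ exactly as in \S\ref{ss:res-part}. By the discussion there these satisfy conditions \cref{prop:witt-fb}{a}--\cref{prop:witt-fb}{c} of Proposition~\ref{prop:witt-fb}, so by that proposition the assembled map
\begin{displaymath}
a \colon \Sym(\bV) \otimes M \to \bV \otimes M
\end{displaymath}
endows $M$ with the structure of a $\ul{W}(\bV)$-module. Moreover, conditions (d) and (e) of \S\ref{ss:res-part} say exactly that $\alpha^S_{i,\{j\}} = (\iota^S_{i,j})_*$ and $\omega^S_\emptyset = \delta \cdot \id$; unwinding the definition of the $\ul{\fgl}(\bV)$-substructure from Proposition~\ref{prop:witt-gl} via \eqref{eq:gl-fb}, this is exactly the statement that $\Phi(M)$ is $\delta$-standard. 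On morphisms $\Phi$ is the identity on underlying $\FB$-module maps.

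For the inverse, start with a $\delta$-standard $\ul{W}(\bV)$-module. Proposition~\ref{prop:witt-fb} produces operations $\alpha$ and $\omega$ satisfying \cref{prop:witt-fb}{a}--\cref{prop:witt-fb}{c}, and the $\delta$-standardness hypothesis is precisely conditions (d) and (e) from \S\ref{ss:res-part}. Proposition~\ref{prop:res-part-op} then supplies a unique $\fG^r(\delta)$-structure on the underlying $\FB$-module inducing these operations. Since both Propositions~\ref{prop:witt-fb} and \ref{prop:res-part-op} assert bijective correspondences between the relevant pieces of structure, the two constructions are mutually inverse, and on morphisms there is nothing to check because a map of underlying $\FB$-modules intertwines $\alpha$ and $\omega$ if and only if it intertwines $a$ if and only if it commutes with all the generating morphisms $\eta$ and $\zeta$, i.e. is a map of $\fG^r(\delta)$-modules.

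I do not expect a real obstacle here: the heavy lifting is already done in Propositions~\ref{prop:witt-fb} and \ref{prop:res-part-op}, and all that remains is to match the two lists of axioms and verify that the $\delta$-standard condition corresponds to axioms (d) and (e). The only point that requires a moment of care is that Proposition~\ref{prop:witt-fb} deals with an arbitrary (not necessarily $\delta$-standard) map $a$, so one must verify that the singleton case $\alpha^S_{i,\{j\}}$ falls within the scope of that proposition's axioms (it does, since $\{j\}$ is a perfectly good one-element subset of $S$) and that conditions \cref{prop:witt-fb}{b} and \cref{prop:witt-fb}{c} reduce consistently when $A$ or $B$ is a singleton to the familiar $\ul{\fgl}(\bV)$-relations from Proposition~\ref{prop:gl-fb}, automatically satisfied in the $\delta$-standard case.
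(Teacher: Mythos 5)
Your proposal is correct and follows exactly the route the paper takes: the proof consists of combining Proposition~\ref{prop:witt-fb} (which characterizes $\ul{W}(\bV)$-module structures in terms of the operations $\alpha$ and $\omega$) with Proposition~\ref{prop:res-part-op} (which characterizes $\fG^r(\delta)$-structures in terms of those same operations), and observing that the extra conditions (d) and (e) in the latter are precisely the $\delta$-standard condition. The paper's actual proof is one line; your write-up just makes the matching of axioms explicit, and your identification $\alpha^S_{i,\{j\}} = (\iota^S_{i,j})_*$, $\omega^S_\emptyset = \delta\cdot\id$ as the translation of $\delta$-standardness via \eqref{eq:gl-fb} is correct.
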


\begin{proof}
This follows from combining Propositions~\ref{prop:witt-fb} and~\ref{prop:res-part-op}. We note that conditions (d) and (e) in the latter correspond to the $\delta$-standard condition. 
\end{proof}

\subsection{Application to $\FA$} \label{ss:fa}

We now consider $\fG^r=\fG^r(0)$ with parameter $\delta=0$. Let $\FA$ be the category of finite sets and all functions. A function $f \colon T \to S$ can be viewed as a restricted partition diagram from $S$ to $T$: the parts are $\{x\} \cup f^{-1}(x)$ with $x \in S$. Furthermore, this identification is compatible with composition. We thus see that the linearized category $\bk[\FA^{\op}]$ is equivalent to the subcategory of $\fG^r$ spanned by the $\eta$ morphisms. Since $\delta=0$, the $\zeta$ morphisms form an ideal of $\fG^r$, and we have a $\bk$-linear functor $\fG^r \to \bk[\FA^{\op}]$ that kills the $\zeta$ morphisms. We therefore see that an $\FA^{\op}$-module is the same as a $\fG^r$-module in which the $\zeta$ morphisms act by zero. Let $\Rep_0'(\ul{W}(\bV))$ be the full subcategory of $\Rep(\ul{W}(\bV))$ spanned by representations that are 0-standard and in which the $\omega$ operation vanishes. Since the $\zeta$ morphisms correspond to the $\omega$ operation, Theorem~\ref{thm:witt} immediately implies the following:

\begin{proposition}
There is a natural isomorphism of categories
\begin{displaymath}
\Mod_{\FA^{\op}} = \Rep_0'(\ul{W}(\bV)).
\end{displaymath}
\end{proposition}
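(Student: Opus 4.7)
The plan is to invoke Theorem~\ref{thm:witt} and then transport the vanishing condition on $\zeta$ across the equivalence. Concretely, by Theorem~\ref{thm:witt} applied to $\delta=0$, we already have a natural isomorphism of categories
\[
\Mod_{\fG^r(0)} \cong \Rep_0(\ul{W}(\bV)),
\]
so it suffices to identify $\Mod_{\FA^{\op}}$ with the full subcategory of $\Mod_{\fG^r(0)}$ consisting of modules on which every $\zeta$ morphism acts by zero, and to show that this condition matches the vanishing of $\omega$ under the correspondence.

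For the first identification, I would use the discussion immediately preceding the proposition: the functor $\bk[\FA^{\op}] \to \fG^r$ sending a function to its associated partition diagram is fully faithful onto the subcategory generated by $\eta$ morphisms, and when $\delta = 0$ the two-sided ideal $I$ of $\fG^r$ generated by $\zeta$ morphisms gives a direct sum decomposition $\fG^r \cong \bk[\FA^{\op}] \oplus I$ of $\bk$-linear categories (at least at the level of $\Hom$-spaces, compatibly with composition modulo $I$). Therefore a $\fG^r$-module on which $I$ acts by zero is exactly a module over the quotient $\bk[\FA^{\op}]$, giving $\Mod_{\FA^{\op}} = \{M \in \Mod_{\fG^r(0)} : \zeta_* = 0\}$.

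For the second identification, recall from the definitions in \S\ref{ss:res-part} that the action of $\zeta^S_A$ on an $\fG^r$-module $M$ is by construction the operation $\omega^S_A$ defined from the image of $M$ in $\Rep_0(\ul{W}(\bV))$ under the functor of Theorem~\ref{thm:witt}. Thus $\zeta$ acts as zero on $M$ if and only if $\omega$ vanishes on the corresponding $\ul{W}(\bV)$-module. By the very definition of $\Rep_0'(\ul{W}(\bV))$ as the full subcategory of $\Rep_0(\ul{W}(\bV))$ in which $\omega=0$, the equivalence of Theorem~\ref{thm:witt} restricts to the asserted isomorphism.

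The only substantive point to check is that the $\zeta$ morphisms really do span a two-sided ideal when $\delta = 0$ (so that quotienting them out is well-defined) and that they generate the kernel of the projection to $\bk[\FA^{\op}]$. This is where the hypothesis $\delta = 0$ is essential: composing two restricted partition diagrams can produce closed loops, which evaluate to $\delta$, and any such loop forces the composite to lie in the $\zeta$-ideal and hence vanish. This is routine from the composition rule in $\fG^r$ and so not a real obstacle; with it in hand the proof reduces, as written, to a direct combination of Propositions~\ref{prop:witt-fb} and~\ref{prop:res-part-op} together with the observation above.
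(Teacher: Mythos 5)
Your argument matches the paper's own reasoning: the paper also proves this by noting that, when $\delta=0$, the $\zeta$ morphisms form an ideal in $\fG^r(0)$ with quotient $\bk[\FA^{\op}]$, so $\FA^{\op}$-modules are exactly $\fG^r(0)$-modules on which $\zeta$ acts by zero, and then restricting the isomorphism of Theorem~\ref{thm:witt} using that $\zeta$ corresponds to the operation $\omega$. Your extra remark about closed loops evaluating to $\delta=0$ (explaining why $\zeta$ is an ideal) just makes explicit a point the paper asserts without elaboration, so the two proofs are essentially identical.
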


\begin{remark}
Let $\FS$ be the category of finite sets and surjections. There is an analog of the above proposition in which $\FA^{\op}$ is replaced with $\FS^{\op}$ and $\ul{W}(\bV)$ is replaced with $\ul{W}^+(\bV)$. This point of view will be pursued further in \cite{witt}.
\end{remark}

\section{The Weyl Lie algebra} \label{s:weyl}

\subsection{Currying}

Let $(U, \omega)$ be a finite-dimensional symplectic space. Recall that the \defi{Weyl algebra} $A=A(U)$ is the quotient of the tensor algebra $T(U)$ by the 2-sided ideal generated by the relations $xy-yx=\omega(x,y)$ with $x,y \in U$. We define the \defi{Weyl Lie algebra}, denoted $\fa=\fa(U)$, to be the Lie algebra of this associative algebra. Thus $\fa=A$ as a vector space, and the bracket in $\fa$ is the commutator bracket in $A$. Note that $\fa$-modules are vastly more complicated than $A$-modules; indeed an $\fa$-module is the same as a module over the universal enveloping algebra $\cU(\fa)$, which is much larger than $A$.

Let $V$ be a finite-dimensional vector space, and let $\fa=\fa(V \oplus V^*)$, where we regard $U=V \oplus V^*$ as a symplectic space in the usual manner. As a vector space, we have $\fa=\Sym(V \oplus V^*)$. We thus see that, for a vector space $M$, giving a map
\begin{displaymath}
\mu \colon \fa \otimes M \to M
\end{displaymath}
is equivalent to giving maps
\begin{displaymath}
\mu_{r,s} \colon \Sym^r(V) \otimes \Sym^s(V^*) \otimes M \to M
\end{displaymath}
for all $r,s \in \bN$, which, in turn, is equivalent to giving maps
\begin{displaymath}
a_{r,s} \colon \Sym^r(V) \otimes M \to \Div^s(V) \otimes M
\end{displaymath}
for all $r,s \in \bN$. We assume for simplicity, that for $f \in \Sym^r(V)$ and $x \in M$ we have $a_{r,s}(f \otimes x)=0$ for all but finitely many $s$; this will automatically hold in the main case of interest to us. We can therefore package the $a_{r,s}$'s into a single map
\begin{displaymath}
a \colon \Sym(V) \otimes M \to \Div(V) \otimes M.
\end{displaymath}
Given a map $a$ as above, we define $a'$ to be the composition in the following diagram (set $S=\Sym(V)$ and $D=\Div(V)$):
\begin{displaymath}
\resizebox{\textwidth}{!}{
\xymatrix@C=6em{
S \otimes S \otimes M \ar[r]^-{\id \otimes \Delta \otimes \id} \ar@{..>}[d]_{a'} &
S \otimes S \otimes S \otimes M \ar[r]^-{\tau \otimes \id \otimes \id} &
S \otimes S \otimes S \otimes M \ar[r]^-{\id \otimes m \otimes \id} &
S \otimes S \otimes M \ar[d]^{\id \otimes a} \\
D \otimes D \otimes M & 
D \otimes D \otimes D \otimes M \ar[l]_-{m \otimes \id \otimes \id} &
S \otimes D \otimes D \otimes M \ar[l]_-{\avg \otimes \id \otimes \id \otimes \id} &
S \otimes D \otimes M \ar[l]_-{\id \otimes \Delta \otimes \id} } }
\end{displaymath}
Here $m$ and $\Delta$ are multiplication and comultiplication, $\tau$ is the symmetry of the tensor product, and $\avg \colon S \to D$ is the averaging map. We define $a'' = \tau_{1,2} a' \tau_{1,2}$.

\begin{proposition} \label{prop:part-curry}
Let $\mu$ and $a$ be corresponding linear maps as above. Then $\mu$ defines a representation of $\fa$ if and only if $[a_1,a_2]=a'-a''$.
\end{proposition}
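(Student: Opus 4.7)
The proof follows the template of Propositions~\ref{prop:glid} and~\ref{prop:witt-curry}: fix a basis $\xi_1,\dots,\xi_n$ of $V$ with dual basis $\partial_1,\dots,\partial_n$ of $V^*$ (so that $[\partial_i,\xi_j]=\delta_{ij}$ in the Weyl algebra $A$), identify $\fa$ with $\Sym(V)\otimes\Sym(V^*)$ via the PBW normal ordering that places the $\xi$'s to the left of the $\partial$'s, and verify the proposed identity on monomial inputs $\xi^\alpha\otimes\xi^\beta\otimes x$. Under currying, $a$ is determined by $a(\xi^\alpha\otimes x)=\sum_{\gamma}\xi^{[\gamma]}\otimes\mu(\xi^\alpha\partial^\gamma)(x)$, where $\{\xi^{[\gamma]}\}\subset\Div(V)$ is dual to $\{\partial^\gamma\}\subset\Sym(V^*)$.

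First I would compute $[a_1,a_2](\xi^\alpha\otimes\xi^\beta\otimes x)$ by directly unpacking both compositions; one obtains
\begin{equation*}
[a_1,a_2](\xi^\alpha\otimes\xi^\beta\otimes x) = \sum_{\gamma,\delta}\xi^{[\gamma]}\otimes\xi^{[\delta]}\otimes\bigl[\mu(\xi^\alpha\partial^\gamma),\mu(\xi^\beta\partial^\delta)\bigr](x).
\end{equation*}
If $\mu$ is a Lie action then this equals $\sum_{\gamma,\delta} \xi^{[\gamma]}\otimes\xi^{[\delta]}\otimes\mu\bigl([\xi^\alpha\partial^\gamma,\xi^\beta\partial^\delta]\bigr)(x)$, and the bracket can be expanded in $\fa$ by iterating the Weyl reordering identity
\begin{equation*}
\partial^\gamma\xi^\beta=\sum_{\eta}\binom{\gamma}{\eta}\frac{\beta!}{(\beta-\eta)!}\,\xi^{\beta-\eta}\partial^{\gamma-\eta},
\end{equation*}
so that after the $\eta=0$ terms cancel one is left with a sum over nonzero ``contractions'' $\eta$ of terms of the form $c_\eta\,\mu(\xi^{\alpha+\beta-\eta}\partial^{\gamma+\delta-\eta})(x)$ with explicit binomial coefficients $c_\eta$ depending on $(\alpha,\beta,\gamma,\delta,\eta)$.

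Next I would walk $\xi^\alpha\otimes\xi^\beta\otimes x$ through the seven arrows defining $a'$, tracking the combinatorics at each stage: the Sym coproduct $\Delta(\xi^\beta)=\sum_{\beta_1+\beta_2=\beta}\binom{\beta}{\beta_1}\xi^{\beta_1}\otimes\xi^{\beta_2}$, the swap $\tau$, the multiplication $m$ producing $\xi^{\alpha+\beta_2}$, the application of $a$ producing $\sum_\delta\xi^{[\delta]}\otimes\mu(\xi^{\alpha+\beta_2}\partial^\delta)(x)$, the Div coproduct splitting $\xi^{[\delta]}$, the averaging $\avg$ of $\xi^{\beta_1}$, and the final Div multiplication. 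The symmetric computation for $a''=\tau a'\tau$ produces the analogous sum with the roles of $(\alpha,\gamma)$ and $(\beta,\delta)$ interchanged. Extracting the coefficient of $\xi^{[\gamma]}\otimes\xi^{[\delta]}$ in $a'-a''$ yields an explicit linear combination of $\mu(\xi^{\alpha+\beta-\eta}\partial^{\gamma+\delta-\eta})(x)$ indexed by $\eta$.

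The main obstacle is the coefficient bookkeeping: for each nonzero $\eta$ one must verify that the combinatorial factor coming from the Sym coproduct, $\avg$, Div coproduct, and Div multiplication precisely matches $c_\eta$. This reduces to a Vandermonde-type binomial identity expressing the compatibility of the Hopf-algebra structures on $\Sym(V)$ and $\Div(V)$; the $\eta=0$ pieces cancel independently on both sides and correspond to the would-be algebra (not just Lie algebra) compositions $\mu(F)\mu(G)$ and $\mu(G)\mu(F)$. Since every step of the computation is reversible, matching coefficients shows that $[a_1,a_2]=a'-a''$ holds if and only if $\mu([\xi^\alpha\partial^\gamma,\xi^\beta\partial^\delta])=[\mu(\xi^\alpha\partial^\gamma),\mu(\xi^\beta\partial^\delta)]$ for all monomials, which by linearity is the defining condition for $\mu$ to be a Lie representation of $\fa$.
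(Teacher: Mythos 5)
Your proposal takes essentially the same route as the paper: fix a basis and dual basis, write the currying formula $a(\xi^\alpha\otimes x)=\sum_\gamma \xi^{[\gamma]}\otimes\mu(\xi^\alpha\partial^\gamma)(x)$, expand $[a_1,a_2]$ via the Weyl-algebra normal-ordering identity, push $\xi^\alpha\otimes\xi^\beta\otimes x$ through the seven maps defining $a'$ (and symmetrically $a''$), and match coefficients of $\xi^{[\gamma]}\otimes\xi^{[\delta]}$. Your reordering identity $\binom{\gamma}{\eta}\frac{\beta!}{(\beta-\eta)!}=\binom{\gamma}{\eta}\binom{\beta}{\eta}\eta!$ agrees with the paper's, and the observation that the $\eta=0$ terms encode the associative-algebra compositions and cancel on both sides is a correct (and helpful) gloss that the paper leaves implicit; the main difference is that the paper actually carries out the seven-step chase and verifies the binomial bookkeeping line by line, whereas you leave it as a claimed Vandermonde-type identity.
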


\begin{proof}
  Let $\xi_1, \ldots, \xi_n$ be a basis for $V$ and let $\eta_1,\dots,\eta_n$ be the dual basis for $V^*$. We identify $\Sym(V)$ with the polynomial ring $\bk[\xi_1, \ldots, \xi_n]$, and $\Sym(V^*)$ with the polynomial ring $\bk[\eta_1, \ldots, \eta_n]$. For an exponent vector $\alpha \in \bN^n$, we let $\xi^{\alpha}$ be the monomial $\xi_1^{\alpha_1} \cdots \xi_n^{\alpha_n}$. We also define the divided power $\xi^{[\alpha]}=\frac{\xi^{\alpha}}{\alpha!}$, where $\alpha!=(\alpha_1!) \cdots (\alpha_n!)$. We define $\eta^{\alpha}$ and $\eta^{[\alpha]}$ similarly. For $1 \le i \le n$, we let $\delta_i \in \bN^n$ be the exponent vector that is~1 in the $i$th coordinate and~0 elsewhere.

The identity map $\Sym^r(V^*) \to \Sym^r(V^*)$ curries to the map $\bk \to \Div^r(V) \otimes \Sym^r(V^*)$ taking~1 to $\sum_{\vert \sigma \vert=r} \xi^{[\sigma]} \otimes \eta^{\sigma}$. It follows that we have
\begin{displaymath}
a(\xi^{\alpha} \otimes x) = \sum_{\sigma} \xi^{[\sigma]} \otimes \xi^{\alpha} \eta^{\sigma} x,
\end{displaymath}
where the sum is over all exponent vectors, and $\xi^{\alpha} \eta^{\sigma}$ is regarded as an element of $\fa$. As usual, we thus have
\begin{displaymath}
[a_1,a_2](\xi^{\alpha} \otimes \xi^{\beta} \otimes x) = \sum_{\sigma,\tau} \xi^{[\sigma]} \otimes \xi^{[\tau]} \otimes [\xi^{\alpha} \eta^{\sigma}, \xi^{\beta} \eta^{\tau}] x.
\end{displaymath}
Now, in the Weyl algebra $A$ we have
\begin{displaymath}
\eta_i^r \xi_i^s = \sum_{\epsilon_i \in \bN} \binom{r}{\epsilon_i} \binom{s}{\epsilon_i} \epsilon_i! \cdot \xi_i^{s-\epsilon_i} \eta_i^{r-\epsilon_i},
\end{displaymath}
and so
\begin{displaymath}
[\xi^{\alpha} \eta^{\sigma}, \xi^{\beta} \eta^{\tau}] = \sum_{\epsilon \in \bN^n} \left( \binom{\beta}{\epsilon} \binom{\sigma}{\epsilon} - \binom{\alpha}{\epsilon} \binom{\tau}{\epsilon} \right) \epsilon! \cdot \xi^{\alpha+\beta-\epsilon} \eta^{\sigma+\tau-\epsilon},
\end{displaymath}
where $\binom{\alpha}{\epsilon} = \prod_{i=1}^n \binom{\alpha_i}{\epsilon_i}$ and $\epsilon! = \prod_{i=1}^n \epsilon_i!$.
Thus, we have
\begin{displaymath}
[a_1,a_2](\xi^{\alpha} \otimes \xi^{\beta} \otimes x) = \sum_{\sigma,\tau,\epsilon} \left( \binom{\beta}{\epsilon} \binom{\sigma}{\epsilon} - \binom{\alpha}{\epsilon} \binom{\tau}{\epsilon} \right) \epsilon! \cdot \xi^{[\sigma]} \otimes \xi^{[\tau]} \otimes \xi^{\alpha+\beta-\epsilon} \eta^{\sigma+\tau-\epsilon} x.
\end{displaymath}
We now compute $a'(\xi^{\alpha} \otimes \xi^{\beta} \otimes x)$. The map $a'$ is defined as the composition of seven maps. The effect of each is worked out in turn in the following derivation
\begingroup
\allowdisplaybreaks
\begin{align*}
\xi^{\alpha} \otimes \xi^{\beta} \otimes x
&\mapsto \sum_{\epsilon} \binom{\beta}{\epsilon} \xi^{\alpha} \otimes \xi^{\epsilon} \otimes \xi^{\beta-\epsilon} \otimes x \\
&\mapsto \sum_{\epsilon} \binom{\beta}{\epsilon} \xi^{\epsilon} \otimes \xi^{\alpha} \otimes \xi^{\beta-\epsilon} \otimes x \\
&\mapsto \sum_{\epsilon} \binom{\beta}{\epsilon} \xi^{\epsilon} \otimes \xi^{\alpha+\beta-\epsilon} \otimes x \\
&\mapsto \sum_{\epsilon, \rho} \binom{\beta}{\epsilon} \xi^{\epsilon} \otimes \xi^{[\rho]} \otimes \xi^{\alpha+\beta-\epsilon} \eta^{\rho} x \\
&\mapsto \sum_{\epsilon,\nu,\mu} \binom{\beta}{\epsilon} \xi^{\epsilon} \otimes \xi^{[\nu]} \otimes \xi^{[\mu]} \otimes \xi^{\alpha+\beta-\epsilon} \eta^{\nu+\mu} x \\
&\mapsto \sum_{\epsilon,\nu,\mu} \binom{\beta}{\epsilon} \epsilon! \cdot \xi^{[\epsilon]} \otimes \xi^{[\nu]} \otimes \xi^{[\mu]} \otimes \xi^{\alpha+\beta-\epsilon} \eta^{\nu+\mu} x \\
&\mapsto \sum_{\epsilon,\nu,\mu} \binom{\beta}{\epsilon} \binom{\epsilon+\nu}{\epsilon} \epsilon! \cdot \xi^{[\epsilon+\nu]} \otimes \xi^{[\mu]} \otimes \xi^{\alpha+\beta-\epsilon} \eta^{\nu+\mu} x \\
&= \sum_{\epsilon,\sigma,\tau} \binom{\beta}{\epsilon} \binom{\sigma}{\epsilon} \epsilon! \cdot \xi^{[\sigma]} \otimes \xi^{[\tau]} \otimes \xi^{\alpha+\beta-\epsilon} \eta^{\sigma+\tau-\epsilon} x.
\end{align*}
\endgroup
We thus see that $a'$ gives the first term in $[a_1,a_2]$. A similar computation shows that $a''$ gives the second, which completes the proof.
\end{proof}

\begin{definition}
Let $\cC$ be a tensor category and let $V$ be an object of $\cC$. We define the \textbf{curried Weyl Lie algebra $\ul{\fa}(V \oplus V^*)$} as follows. A representation of $\ul{\fa}(V \oplus V^*)$ is an object $M$ of $\cC$ equipped with maps
\begin{displaymath}
a_{n,m} \colon \Sym^n(V) \otimes M \to \Div^m(V) \otimes M
\end{displaymath}
for all $n,m \ge 0$, such that $[a_1,a_2]=a'-a''$, where $a'$ and $a''$ are defined as in the previous section.
\end{definition}

\begin{proposition} \label{prop:weyl-to-witt}
Let $V$ be an object of $\cC$ and let $(M,\alpha)$ be a representation of $\ul{\fa}(V \oplus V^*)$. Let $a$ be the composition
\begin{displaymath}
\xymatrix{
\Sym(V) \otimes M \ar[r]^\alpha & \Div(V) \otimes M \ar[r]^-{\pi \otimes \id} & V \otimes M}
\end{displaymath}
where the second map comes from the projection $\pi \colon \Div(V) \to V$. Then $a$ is a representation of the curried Witt algebra $\ul{W}(V)$. In particular, the composition
\begin{displaymath}
\xymatrix{
V \otimes M \ar[r] & \Sym(V) \otimes M \ar[r]^\alpha & \Div(V) \otimes M \ar[r] & V \otimes M}
\end{displaymath}
is a representation of $\ul{\fgl}(V)$.
\end{proposition}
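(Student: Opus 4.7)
The second assertion follows from the first together with Proposition~\ref{prop:witt-gl}, so I would concentrate on verifying the curried Witt identity $[a_1,a_2]=a'-a''$ for the map $a=(\pi\otimes\id)\alpha$.

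The plan is to obtain this identity by pushing the Weyl identity of Proposition~\ref{prop:part-curry} forward along the projection $P := \pi\otimes\pi\otimes\id \colon \Div V\otimes\Div V\otimes M \to V\otimes V\otimes M$. For the left-hand side, the definition $a=(\pi\otimes\id)\alpha$ gives $a_2=(\id\otimes\pi\otimes\id)\alpha_2$ at once, and conjugating by $\tau$ yields $a_1=(\pi\otimes\id\otimes\id)\alpha_1$. Because each $\alpha_i$ acts trivially on the factor where the complementary projection $\pi$ acts, $P$ slides past both compositions and one obtains $[a_1,a_2]=P[\alpha_1,\alpha_2]$ with no further work.

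The substantive step is then to show that $P(\alpha'-\alpha'')=a'-a''$. I would evaluate $P\alpha'(\xi^\alpha\otimes\xi^\beta\otimes x)$ on monomials using the formulas developed in the proof of Proposition~\ref{prop:part-curry}: applying $P$ forces the divided-power indices $\sigma,\tau$ appearing there each to have total degree $1$, and the remaining sum splits according to the degree of the component extracted from $\xi^\beta$ by the symmetric comultiplication. The degree-$1$ part exactly reproduces $a'(\xi^\alpha\otimes\xi^\beta\otimes x)$, via the factorization $\Delta_W=(\pi\otimes\id)\circ\Delta_{\Sym}$ of the Witt comultiplication $\Delta_W(f)=\sum x_i\otimes\partial_i f$ through the full symmetric comultiplication. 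The degree-$0$ part produces a ``spillover'' term of the shape $\sum_{p,q}\xi_p\otimes\xi_q\otimes\xi^{\alpha+\beta}\eta_p\eta_q\,x$, coming from the degree-$2$ output of $\alpha$. Performing the same analysis for $P\alpha''$ yields $a''$ together with an identical spillover term, because the spillover depends only on the symmetric product $\xi^\alpha\xi^\beta=\xi^{\alpha+\beta}$ and on the symmetric expression $\eta_p\eta_q$. The two spillovers cancel in the difference, leaving $a'-a''$.

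The main obstacle is exactly this spillover bookkeeping: one has to track carefully how $\Delta_{\Sym}$, $\avg$, and $\Delta_{\Div}$ interact on their degree-$1$ components in order both to see that the degree-$1$ piece matches the Witt $a'$ on the nose and to see that the degree-$2$ piece is genuinely symmetric in $\alpha,\beta$. Once these are organized, chaining the two reductions gives $[a_1,a_2]=P[\alpha_1,\alpha_2]=P(\alpha'-\alpha'')=a'-a''$, as required.
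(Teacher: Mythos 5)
Your proof is correct, and it is in fact more careful than the paper's own argument in one important respect. Both proofs share the same overall strategy: push the Weyl identity $[\alpha_1,\alpha_2]=\alpha'-\alpha''$ of Proposition~\ref{prop:part-curry} forward along $P=\pi\otimes\pi\otimes\id$, obtaining $[a_1,a_2]=P[\alpha_1,\alpha_2]$ by sliding $P$ past each $\alpha_i$, which works because each $\alpha_i$ leaves the complementary tensor factor untouched. Where you diverge from the paper is in handling the right-hand side. The paper attempts to show $P\alpha'=a'$ (and likewise for $a''$) outright, by asserting that $P\colon \Sym V\otimes\Div V\otimes M\to V\otimes V\otimes M$ coincides with the composite of the last three maps in the definition of the Weyl $\alpha'$ followed by $P$. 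That assertion is not correct as stated: on $\xi^{0}\otimes\xi^{[\rho]}\otimes x$ with $|\rho|=2$, $P$ alone gives zero, whereas the composite picks up the degree-$(1,1)$ component of $\Delta(\xi^{[\rho]})$ and produces a nonzero result. You instead observe, correctly, that $P\alpha'$ equals $a'$ plus a spillover $\sum_{p,q}\xi_p\otimes\xi_q\otimes\xi^{\alpha+\beta}\eta_p\eta_q\,x$ coming from the $\epsilon=0$ stratum of the symmetric comultiplication, and that this spillover is invariant under $\tau$ and under $\alpha\leftrightarrow\beta$ (since $\eta_p\eta_q=\eta_q\eta_p$ and $\xi^\alpha\xi^\beta=\xi^{\alpha+\beta}$), so it cancels identically against the corresponding term in $P\alpha''$. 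This cancellation is exactly the missing ingredient, and your proof supplies it. When writing this up in full, carry out the parallel monomial computation of $P\alpha''$ rather than leaning only on the symmetry heuristic; it is short and removes any doubt that the two spillovers are literally equal.
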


\begin{proof}
  First, we have
  \[
    [a_1,a_2] = (\pi \otimes \pi \otimes \id) \circ [\alpha_1,\alpha_2].
  \]
  Second, $a'$ is equal to the following composition:
  \begin{align*}
  \Sym V \otimes \Sym V \otimes M &\xrightarrow{{\rm id} \otimes \Delta \otimes {\rm id}} \Sym V \otimes \Sym V \otimes \Sym V \otimes M\\
  &\xrightarrow{\tau \otimes {\rm id} \otimes {\rm id}}  \Sym V \otimes \Sym V \otimes \Sym V \otimes M\\
                                  &\xrightarrow{{\rm id} \otimes m \otimes {\rm id}} \Sym V \otimes \Sym V \otimes M\\
                                  &\xrightarrow{\id \otimes \alpha} \Sym V \otimes \Div V \otimes M\\
    &\xrightarrow{\pi \otimes \pi \otimes \id} V \otimes V\otimes M.
  \end{align*}
  The first four maps agree with the first four maps of the definition of $\alpha'$. It is straightforward to verify that the map
  \[
    \pi \otimes \pi \otimes \id \colon \Sym V \otimes \Div V \otimes M \to V \otimes V \otimes M
  \]
  agrees with the composition
  \begin{align*}
    \Sym V \otimes \Div V \otimes M &\xrightarrow{\id \otimes \Delta \otimes \id} \Sym V \otimes \Div V \otimes \Div V \otimes M\\
                                    &\xrightarrow{\avg \otimes \id \otimes \id \otimes \id}\Div V \otimes \Div V \otimes \Div V \otimes M\\
                                    &\xrightarrow{\id \otimes m \otimes \id} \Div V \otimes \Div V \otimes M\\
                                    &\xrightarrow{\pi \otimes \pi \otimes \id} V \otimes V \otimes M.
  \end{align*}
  In particular, we have $a_1 = (\pi \otimes \pi \otimes \id) \circ \alpha_1$ and by applying $\tau$, we conclude that $a_2 = (\pi \otimes \pi \otimes \id) \circ \alpha_2$. This means $[a_1,a_2]=a'-a''$ is a result of applying $(\pi \otimes \pi \otimes \id)$ to the identity $[\alpha_1,\alpha_2] = \alpha'-\alpha''$.
\end{proof}

\subsection{In species}

Let $M$ be an $\FB$-module and consider a map
\begin{displaymath}
a \colon \Sym(\bV) \otimes M \to \Sym(\bV) \otimes M.
\end{displaymath}
Let $\phi$ be the corresponding symmetric operation on $M$. Thus if $S$ is a finite set, $B$ is a subset of $S$, and $x$ is an element of $M(S \setminus B)$, then
\begin{displaymath}
a(t^B \otimes x) = \sum_{A \subseteq S} t^A \otimes \phi^S_{A,B}(x).
\end{displaymath}
We consider the following conditions on $\phi$. Let $A$, $B$, $C$, and $D$ be subsets of a finite set $S$, with $A \cap B = \emptyset$ and $C \cap D = \emptyset$.
\begin{itemize}
\item[(B1)] If $A \cap C=B \cap D=\emptyset$ then
\begin{displaymath}
\phi^{S \setminus C}_{D, A} \circ \phi^{S \setminus A}_{C,B}
= \phi^{S \setminus D}_{C,B} \circ \phi^{S \setminus B}_{D,A}.
\end{displaymath}
In other words, $\phi$ commutes with itself.
\item[(B2)] If $A \cap C=\emptyset$ and $B \cap D \ne \emptyset$ then
\begin{displaymath}
\phi^{S \setminus C}_{D, A} \circ \phi^{S \setminus A}_{C,B}
= \sum_{\substack{X \subseteq B \cap D\\ X \ne \emptyset}} \phi^{S \setminus X}_{(D \setminus X) \cup C,A \cup (B \setminus X)}.
\end{displaymath}
We remark that there is also a version of this condition if $A \cap C \ne \emptyset$ and $B \cap D = \emptyset$; however, since the above condition holds for all choices of $A,B$ and $C,D$ and they play symmetric roles, we omit listing it separately as it is actually redundant.
\item[(B3)] If $A \cap C \ne \emptyset$ and $B \cap D \ne \emptyset$ then
\begin{displaymath}
\sum_{X \subseteq B \cap D} \phi^{S \setminus X}_{(D \setminus X) \cup C,A \cup (B \setminus X)}
= \sum_{X \subseteq A \cap C} \phi^{S \setminus X}_{(C \setminus X) \cup D, B \cup (A \setminus X)}.
\end{displaymath}
\end{itemize}
We then have the result:

\begin{proposition}
The map $a$ defines a representation of $\ul{\fa}(\bV \oplus \bV^*)$ if and only if the operation $\phi$ satisfies (B1), (B2), and (B3).
\end{proposition}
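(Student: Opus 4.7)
The plan is to mimic the proofs of Propositions \ref{prop:sp-gl} and \ref{prop:witt-fb}: expand both sides of $[a_1,a_2]=a'-a''$ on a generic basis element $t^A \otimes t^B \otimes x$ of $(\Sym(\bV) \otimes \Sym(\bV) \otimes M)(S)$, extract the coefficient of each $t^C \otimes t^D$ in the target, and show that the resulting family of scalar equations is equivalent to (B1)--(B3).

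First I would compute
\[
a_2(t^A \otimes t^B \otimes x) = \sum_{K \subseteq S \setminus A} t^A \otimes t^K \otimes \phi^{S \setminus A}_{K,B}(x)
\]
and then apply $a_1$, with the analogous computation for $a_2 a_1$. Reading off the coefficient of $t^C \otimes t^D$, the $[a_1,a_2]$ side yields
\[
\phi^{S \setminus D}_{C,A}\,\phi^{S \setminus A}_{D,B}(x) \;-\; \phi^{S \setminus C}_{D,B}\,\phi^{S \setminus B}_{C,A}(x),
\]
where the first term is understood to vanish unless $A \cap D = \emptyset$ and the second unless $B \cap C = \emptyset$ (these constraints come from requiring the internal summation variable to lie in $S \setminus A$ or $S \setminus B$, respectively).

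Next I would trace $a'(t^A \otimes t^B \otimes x)$ through the seven-step composition defining $a'$. In species $\avg$ is the identity, so the successive comultiplications, multiplications, swap, and application of $a$ produce a double sum over decompositions $B = B_1 \sqcup B_2$ and $K = K_1 \sqcup K_2$; after re-indexing via $X := B_1$, the coefficient of $t^C \otimes t^D$ in $a'$ is
\[
\sum_{X \subseteq B \cap C} \phi^{S \setminus X}_{(C \setminus X) \cup D,\, A \cup (B \setminus X)}(x),
\]
and the analogous calculation for $a'' = \tau a' \tau$ gives $\sum_{Y \subseteq A \cap D} \phi^{S \setminus Y}_{(D \setminus Y) \cup C,\, B \cup (A \setminus Y)}(x)$.

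The proof then concludes with a case analysis on whether $A \cap D$ and $B \cap C$ are empty. When both are empty, the $X = \emptyset$ and $Y = \emptyset$ terms in the two right-hand sums both equal $\phi^S_{C \cup D,\, A \cup B}(x)$ and therefore cancel; what remains, matched with the left-hand side, is (B1) after the relabeling $C \leftrightarrow D$. When exactly one of $A \cap D$, $B \cap C$ is empty, the same cancellation leaves (B2) (or its redundant $A \leftrightarrow B$ variant mentioned in the remark after the statement). When both intersections are nonempty the left-hand side vanishes, and the two right-hand sums must coincide, which is (B3). Every step is reversible, so (B1)--(B3) imply $[a_1,a_2]=a'-a''$. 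The main obstacle will be the bookkeeping in tracing $a'$ through its seven-step definition and correctly identifying $X$ with $B_1$; the clean organizing principle is that the $X = \emptyset$ contribution to $a'$ always matches the $Y = \emptyset$ contribution to $a''$, which explains why (B1) has no sum, (B2) imposes $X \ne \emptyset$, and (B3) can freely include $X = \emptyset$.
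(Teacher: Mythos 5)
Your proof is correct and follows the same approach as the paper: expand $[a_1,a_2]$, $a'$, and $a''$ on $t^A\otimes t^B\otimes x$, read off coefficients, and match against (B1)--(B3). Your free-variable labeling swaps $C$ and $D$ relative to the paper's display (the paper writes the output as $t^{[D]}\otimes t^{[C]}$, with constraints $A\cap C=\emptyset$ and sums over $X\subseteq B\cap D$ and $X\subseteq A\cap C$), but after that relabeling your coefficient formulas agree with the paper's, your derivation of the $a'$ coefficient via the decompositions $B=B_1\sqcup B_2$, $K=K_1\sqcup K_2$ with $X:=B_1\subseteq B\cap C$ is correct, and your three-case split on emptiness of the two intersections is precisely how (B1), (B2), (B3) arise. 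The organizing observation that the $X=\emptyset$ term of $a'$ always cancels the $Y=\emptyset$ term of $a''$ is exactly the right way to see why (B2) excludes $X=\emptyset$ while (B3) may include it; the paper leaves this to the reader with ``equating coefficients, one sees that\dots'', so your write-up actually supplies more detail than the published argument.
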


\begin{proof}
Let $A$ and $B$ be disjoint subsets of $S$ and let $x \in M(S \setminus (A \cup B))$. Then
\begin{align*}
a_1(a_2(t^A \otimes t^B \otimes x)) &= \sum_{\substack{C \amalg D \subseteq S\\ A \cap C = \emptyset}} t^{[D]} \otimes t^{[C]} \otimes \phi^{S \setminus C}_{D, A}(\phi^{S \setminus A}_{C,B}(x)),\\
a_2(a_1(t^A \otimes t^B \otimes x)) &= \sum_{\substack{C \amalg D \subseteq S\\ B \cap D = \emptyset}} t^{[D]} \otimes t^{[C]} \otimes \phi^{S \setminus D}_{C,B}(\phi^{S \setminus B}_{D,A}(x)),\\
a'(t^A \otimes t^B \otimes x) &= \sum_{C \amalg D \subseteq S} \sum_{X \subseteq B \cap D} t^{[D]} \otimes t^{[C]} \otimes \phi^{S \setminus X}_{(D \setminus X) \cup C,A \cup (B \setminus X)}(x),\\
a''(t^A \otimes t^B \otimes x) &= \sum_{C \amalg D \subseteq S} \sum_{X \subseteq A \cap C} t^{[D]} \otimes t^{[C]} \otimes \phi^{S \setminus X}_{(C \setminus X) \cup D, B \cup (A \setminus X)}(x).
\end{align*}
Equating coefficients, one sees that $[a_1,a_2]=a'-a''$ if and only if (B1), (B2), and (B3) hold.
\end{proof}

Recall that a symmetric operation $\phi$ corresponds to a sequence $(\phi[n])_{n \ge 0}$ of simple symmetric operations. The correspondence is given by $\phi[n]^S_{A,B}=\phi^{S \amalg [n]}_{A \amalg [n], B \amalg [n]}$. We now wish to translate the conditions (B1), (B2), and (B3) to the $\phi[n]$. We begin with the following observation:

\begin{proposition}
Condition {\rm (B3)} is equivalent to the following condition:
\begin{itemize}
\item[\rm (B3$'$)] We have $\phi[n]=(-1)^{n+1} \phi[1]$ for all $n \ge 1$.
\end{itemize}
\end{proposition}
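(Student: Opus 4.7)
The plan is to translate (B3) into an equivalent family of identities on the simple operations $\phi[n]$, and then match that family against (B3').

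First I would carry out the translation. Fix $A, B, C, D \subseteq S$ satisfying the hypotheses of (B3), and introduce the four pairwise disjoint sets $I = A \cap C$, $J = B \cap D$, $K = A \cap D$, $L = B \cap C$ (pairwise disjointness is forced by $A \cap B = C \cap D = \emptyset$; e.g.\ $I \cap J \subseteq A \cap B = \emptyset$). A direct set-theoretic computation, using $X \subseteq J$ and hence $X \cap A = X \cap C = \emptyset$, yields
\[
((D \setminus X) \cup C) \cap (A \cup (B \setminus X)) = I \cup K \cup L \cup (J \setminus X),
\]
and further shows that the two one-sided differences and the ambient set $(S \setminus X) \setminus (\cdots)$ depend on $X$ only through its cardinality. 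Applying the formula $\phi^S_{A', B'} = \phi[|A' \cap B'|]^{S \setminus (A' \cap B')}_{A' \setminus B',\, B' \setminus A'}$ and collecting by $x = |X|$, the LHS of (B3) becomes
\[
\sum_{x=0}^{j} \binom{j}{x} \phi[N - x]^{T}_{A'', B''},
\]
where $i = |I|$, $j = |J|$, $N = |I| + |J| + |K| + |L|$, $T = S \setminus (I \cup J \cup K \cup L)$, $A'' = (C \cup D) \setminus (A \cup B)$, and $B'' = (A \cup B) \setminus (C \cup D)$. The RHS reduces to the same expression with $i$ in place of $j$. Conversely, given any finite $T$, disjoint $A'', B'' \subseteq T$, and integers $i, j \geq 1$, $N \geq i + j$, one realizes this data by choosing $I, J, K, L$ pairwise disjoint and disjoint from $T$ of cardinalities $i, j, k, \ell$ with $k + \ell = N - i - j$, and setting $A = I \cup K \cup B''$, $B = J \cup L$, $C = I \cup L \cup A''$, $D = J \cup K$, $S = T \cup I \cup J \cup K \cup L$. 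Thus (B3) is equivalent to the family
\[
(\ast) \qquad \sum_{x=0}^{j} \binom{j}{x} \phi[N - x]^{T}_{A'', B''} = \sum_{x=0}^{i} \binom{i}{x} \phi[N - x]^{T}_{A'', B''}
\]
quantified over all $T$, disjoint $A'', B'' \subseteq T$, and integers $i \geq 1$, $j \geq 1$, $N \geq i + j$.

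For (B3')~$\Rightarrow$~(B3), I substitute $\phi[n] = (-1)^{n+1} \phi[1]$ into $(\ast)$; since $N - x \geq \min(i, j) \geq 1$ throughout, the substitution is legitimate, and each side collapses to a scalar multiple of $\phi[1]^T_{A'', B''}$ times $(1-1)^j$ or $(1-1)^i$, both zero for $i, j \geq 1$.

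For (B3)~$\Rightarrow$~(B3'), I specialize $(\ast)$ to $(i, j, N) = (1, 2, n + 1)$ for each $n \geq 2$. The LHS is $\phi[n+1] + 2\phi[n] + \phi[n-1]$ and the RHS is $\phi[n+1] + \phi[n]$; subtracting yields $\phi[n] = -\phi[n-1]$ as operations. Iterating from $\phi[1]$ gives $\phi[n] = (-1)^{n+1} \phi[1]$ for all $n \geq 1$. The main obstacle is the combinatorial bookkeeping in the translation step: one must verify that, after the decomposition of $A \cup B \cup C \cup D$ into the eight pairwise disjoint blocks $I, J, K, L$ together with $A \setminus (C \cup D), B \setminus (C \cup D), C \setminus (A \cup B), D \setminus (A \cup B)$, all of the intersections, set-differences, and ambient sets appearing in (B3) become functions of $|X|$ alone. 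Once this is in place, the remainder of both directions is the alternating-sum identity $(1-1)^n = 0$ for $n \geq 1$ together with a one-step recurrence.
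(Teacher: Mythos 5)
Your proof is correct and takes essentially the same approach as the paper's: both directions rest on decomposing $A\cup B\cup C\cup D$ into the pairwise disjoint intersection blocks, rewriting each term of (B3) as $\phi[N-x]^T_{A'',B''}$, and invoking the alternating binomial identity $\sum_{x}\binom{m}{x}(-1)^x=0$ for $m\ge 1$. Your intermediate reformulation $(\ast)$ simply makes this bookkeeping explicit and uniform; the paper's forward direction is exactly your specialization $(i,j)=(1,2)$ with $\ell=0$ and $k=r$, and its reverse direction carries out the same binomial collapse after first recording the identity $\phi^{S\amalg Y}_{P\amalg Y,\,Q\amalg Y}=(-1)^{\#Y}\phi^{S}_{P,Q}$ for non-disjoint $P,Q$.
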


\begin{proof}
Suppose (B3) holds. Let $P$ and $Q$ be disjoint subsets of a set $S$. Let $r \ge 0$ and put
\begin{displaymath}
\tilde{S}=S \amalg \{i_1, \ldots, i_r, j_1, j_2, k \}
\end{displaymath}
where the $i$'s, $j$'s, and $k$ are distinct from each other and all elements of $S$. Put
\begin{displaymath}
A = Q \cup \{i_1, \ldots, i_r, k \}, \quad
B = \{j_1, j_2\}, \quad
C = P \cup \{k\}, \quad
D = \{i_1,\ldots,i_r,j_1,j_2\}.
\end{displaymath}
We have
\begin{align*}
\sum_{X \subseteq B \cap D} \phi^{\tilde{S} \setminus X}_{(D \setminus X) \cup C, A \cup (B \setminus X)} &= \phi[r+3]^S_{P,Q}+2\phi[r+2]^S_{P,Q}+\phi[r+1]^S_{P,Q} \\
\sum_{X \subseteq A \cap C} \phi^{\tilde{S} \setminus X}_{D \cup (C \setminus X), (A \setminus X) \cup B} &= \phi[r+3]^S_{P,Q}+\phi[r+2]^S_{P,Q}.
\end{align*}
By (B3), the above two expressions are equal. We thus find
\begin{displaymath}
\phi[r+2]=-\phi[r+1].
\end{displaymath}
As this holds for all $r \ge 0$, we find $\phi[n]=(-1)^{n+1} \phi[1]$ for $n \ge 1$, and so (B3$'$) holds.

Now suppose (B3$'$) holds. This implies
\begin{displaymath}
\phi^{S \amalg Y}_{P \amalg Y, Q \amalg Y}=(-1)^{\# Y} \phi^S_{P,Q}
\end{displaymath}
provided that $P$ and $Q$ are not disjoint. Let $A$, $B$, $C$, and $D$ be as in (B3). Put $m=\# (B \cap D)$, and suppose $X$ is a subset of $B \cap D$ of size $k$. Then applying the above equation with $Y=(B \cap D) \setminus X$, we find
\begin{displaymath}
\phi^{S \setminus X}_{(D \setminus X) \cup C, A \cup (B \setminus X)}=
(-1)^{m-k} \phi^{S \setminus (B \cup D)}_{(D \setminus B) \cup C, A \cup (B \setminus D)} =0.
\end{displaymath}
Note that $A \cap C \ne \emptyset$ since we are in the setting of (B3). It follows that
\begin{displaymath}
\sum_{X \subseteq B \cap D} \phi^{S \setminus X}_{(D \setminus X) \cup C, A \cup (B \setminus X)}
=\sum_{k=0}^m \binom{m}{k} (-1)^{m-k} \phi^{S \setminus (B \cup D)}_{(D \setminus B) \cup C, A \cup (B \setminus D)} =0.
\end{displaymath}
Similarly, the other sum in (B3) vanishes, and so (B3) holds.
\end{proof}

The above proposition shows that we just need to understand the operations $\phi[0]$ and $\phi[1]$. To this end, we introduce some notation. Let $\sB_M$ denote the set of symmetric operations $\phi$ on $M$ satisfying (B1), (B2), and (B3), and let $\sC_M$ denote the set of pairs $(\alpha, \omega)$ of simple symmetric operations on $M$ satisfying the following conditions (C1) and (C2). In what follows, $A$, $B$, $C$, and $D$ are subsets of a finite set $S$.
\begin{itemize}
\item[(C1)] The operations $\alpha$ and $\omega$ commute with themselves and each other. Precisely, assuming that $A$, $B$, $C$ and $D$ are pairwise disjoint, we have
\begin{displaymath}
\alpha^{S\setminus C}_{D,A} \circ \omega^{S \setminus A}_{C,B} = \omega^{S \setminus D}_{C,B} \circ \alpha^{S \setminus B}_{D,A},
\end{displaymath}
and similarly with $\omega$ replaced by $\alpha$, or $\alpha$ replaced by $\omega$.
\item[(C2)] Suppose $B \cap D \ne \emptyset$, but all other pairs disjoint. Put
\begin{align*}
\alpha_1 &= \alpha^{S \setminus C}_{D,A} &
\alpha_2 &= \alpha^{S \setminus A}_{C,B} &
\alpha_3 &= \alpha^{S \setminus (B \cap D)}_{C \cup (D \setminus B),A \cup (B \setminus D)} \\
\omega_1 &= \omega^{S \setminus C}_{D,A} &
\omega_2 &= \omega^{S \setminus A}_{C,B} &
\omega_3 &= \omega^{S \setminus (B \cap D)}_{C \cup (D \setminus B),A \cup (B \setminus D)}
\end{align*}
and let $m=\# (B \cap D)$. Then
\begin{displaymath}
\alpha_1 \alpha_2=\alpha_3, \qquad \alpha_1 \omega_1=\alpha_2 \omega_1=0, \qquad \omega_1 \omega_2=(-1)^{m+1} \omega_3.
\end{displaymath}
\end{itemize}
We now have the following:

\begin{proposition} \label{prop:B-C-bijec}
We have a bijection
\begin{displaymath}
\Theta \colon \sB_M \to \sC_M, \qquad \phi \mapsto (\phi[0]+\phi[1],-\phi[1]).
\end{displaymath}
The inverse to $\Theta$ can be described as follows: $\phi=\Theta^{-1}(\alpha,\omega)$ is the unique symmetric operation on $M$ satisfying $\phi[0]=\alpha+\omega$ and $\phi[n]=(-1)^n \omega$ for $n \ge 1$.
\end{proposition}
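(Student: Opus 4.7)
The plan is to split the argument into a set-theoretic correspondence of data followed by a matching of defining relations.

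Using the already-established equivalence (B3)$\iff$(B3'), any symmetric operation $\phi$ satisfying (B3) is uniquely determined by the pair of simple symmetric operations $(\phi[0],\phi[1])$, since (B3') forces $\phi[n]=(-1)^{n+1}\phi[1]$ for all $n\ge 1$. The prescription $\alpha=\phi[0]+\phi[1]$, $\omega=-\phi[1]$ is an invertible change of basis, inverted by $\phi[0]=\alpha+\omega$, $\phi[1]=-\omega$, and hence $\phi[n]=(-1)^n\omega$ for $n\ge 1$. On the level of pairs of simple symmetric operations, this alone makes $\Theta$ a bijection. Unpacking the reconstruction, one obtains the closed formula
$$
\phi^S_{A,B}=\begin{cases}\alpha^S_{A,B}+\omega^S_{A,B},& A\cap B=\emptyset,\\(-1)^{|A\cap B|}\,\omega^{S\setminus(A\cap B)}_{A\setminus B,\,B\setminus A},& A\cap B\neq\emptyset.\end{cases}
$$

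The substantive step is to show that (B1) and (B2) hold for $\phi$ if and only if (C1) and (C2) hold for $(\alpha,\omega)$. The natural strategy is case analysis on the intersection pattern of the indexing subsets. For (B1), the four standing disjointness relations leave only $A\cap D$ and $B\cap C$ as potentially nonempty, producing three sub-cases up to the $(A,D)\leftrightarrow(B,C)$ symmetry: both overlaps empty, exactly one empty, and both nonempty. In each sub-case I would substitute the closed form above for every occurrence of $\phi$ and expand; by varying $A,B,C,D$ and $S$ so as to isolate distinct $\alpha/\omega$-monomial types, one extracts from the all-disjoint case the $\alpha\alpha$, $\omega\omega$ and $\alpha\omega$ commutations of (C1), from the mixed case the vanishings $\alpha_j\omega_1=0$ of (C2), and from the doubly-overlapping case the sign relation $\omega_1\omega_2=(-1)^{m+1}\omega_3$ of (C2). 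For (B2), a parallel case split combined with the same binomial cancellation that drove the proof of (B3')$\Rightarrow$(B3) collapses the sum over $X\subseteq B\cap D$ to a single surviving term, yielding the composition identity $\alpha_1\alpha_2=\alpha_3$ of (C2). Since every step above is an equivalence once the indexing data are varied freely, the converse direction is the same argument read backwards.

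The main obstacle is the combinatorial bookkeeping: each case of (B1) or (B2), after substitution of the closed form, decomposes into several monomial identities in $\alpha$ and $\omega$ of different intersection types, and one must verify that the ensemble of identities produced by all cases is exactly (C1)+(C2), with no omission or redundancy. The potentially delicate points are the binomial cancellations that make the sums in (B2) collapse, and the isolation argument that separates $\alpha$-monomials from $\omega$-monomials within a single equation (achieved by specializing set sizes so that only one monomial type is supported). Once this bookkeeping is complete, reversibility of the steps establishes the bijection.
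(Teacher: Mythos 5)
Your overall strategy matches the paper's: use (B3)$\Leftrightarrow$(B3$'$) to reduce $\phi$ to the pair $(\phi[0],\phi[1])$, make the linear change of variables, then translate (B1), (B2) into constraints on $(\alpha,\omega)$ by substituting and varying the intersection patterns. The closed formula you write down for $\phi^S_{A,B}$ in terms of $\alpha,\omega$ is correct, and the invertibility of $\Theta$ at the level of raw data is immediate exactly as you say.

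However, your attribution of which relations of (C1)/(C2) fall out of which cases of (B1)/(B2) is wrong, and this is a real error in the sketch, not just a notational slip. Under the standing hypothesis of (B1) (namely $A\cap C=B\cap D=\emptyset$), making $A\cap D$ and/or $B\cap C$ nonempty turns the corresponding $\phi$'s into pure $\omega$'s with cancelling signs. Tracing through your own closed formula, the ``mixed'' subcase of (B1) yields the commutation of $\omega$ with $\alpha+\omega$ (hence, after combining with the all-disjoint case, the $\alpha\omega$-commutation of (C1)), and the ``doubly-overlapping'' subcase yields $\omega\omega$-commutation; neither produces the vanishings $\alpha_1\omega_1=\alpha_2\omega_1=0$ nor the sign relation $\omega_1\omega_2=(-1)^{m+1}\omega_3$. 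In fact, every clause of (B1) is a commutation, so all of (B1) lands in (C1). Conversely, (B2) is not exhausted by the single identity $\alpha_1\alpha_2=\alpha_3$: after the binomial collapse it supplies \emph{all} of (C2). The paper makes this precise via a lemma that reformulates (B2) (assuming (B3)) as (B2$'$), a family of identities indexed by auxiliary parameters $(p,q)$; evaluating at $(p,q)=(0,0),(0,1),(1,0),(1,1)$ produces exactly the four identities that, upon subtracting, give the composition rule, the two vanishings, and the sign relation of (C2). You would discover this redistribution once you actually carried out the substitution, so the method survives, but as written the claimed case-to-relation dictionary does not hold up.
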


We first prove a lemma.

\begin{lemma}
Suppose that $\phi$ satisfies {\rm (B3)}. Then {\rm (B2)} is equivalent to the following condition:
\begin{itemize}
\item[\rm (B$2'$)] Let $A$, $B$, $C$, and $D$ be subsets of a finite set $S$ such that $B \cap D \ne \emptyset$, but all other pairs are disjoint. Put $m=\# (B \cap D)$. Then we have
\begin{align*}
\phi[0]^{S \setminus C}_{D,A} \circ \phi[0]^{S \setminus A}_{C,B}  &= \phi[0]^{S \setminus (B \cap D)}_{(D \setminus B) \cup C, A \cup (B \setminus D)} + (1+(-1)^m) \phi[1]^{S \setminus (B \cap D)}_{(D \setminus B) \cup C, A \cup (B \setminus D)}, \\
\phi[p]^{S \setminus C}_{D,A} \circ \phi[q]^{S \setminus A}_{C,B} &= (-1)^{p+q+m} \phi[1]^{S \setminus (B \cap D)}_{(D \setminus B) \cup C, A \cup (B \setminus D)},
\end{align*}
where in the second equation $p$ and $q$ are non-negative and not both zero.
\end{itemize}
\end{lemma}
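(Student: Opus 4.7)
The plan is to use condition (B3$'$) (equivalent to (B3)) together with the naturality identity
\[
\phi^S_{P,Q} = \phi[|P \cap Q|]^{S \setminus (P \cap Q)}_{P \setminus Q,\, Q \setminus P}
\]
to rewrite every term appearing in (B2) as a simple operation $\phi[n]$, and thereby reduce (B2) to an identity directly comparable to (B2$'$).

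For the direction (B2) $\Rightarrow$ (B2$'$), I will start from a disjoint configuration $(A, B, C, D)$ with $m = |B \cap D|$ as in (B2$'$), and enlarge the ambient set to $\tilde S = S \amalg P \amalg Q$ with $|P| = p$ and $|Q| = q$, taking $\tilde A = A \cup P$, $\tilde D = D \cup P$, $\tilde B = B \cup Q$, $\tilde C = C \cup Q$. One checks $\tilde A \cap \tilde C = \emptyset$, $\tilde B \cap \tilde D = B \cap D \neq \emptyset$, $\tilde A \cap \tilde D = P$, and $\tilde B \cap \tilde C = Q$, so (B2) applies. Naturality collapses the left-hand side $\phi^{\tilde S \setminus \tilde C}_{\tilde D, \tilde A} \circ \phi^{\tilde S \setminus \tilde A}_{\tilde C, \tilde B}$ to $\phi[p]^{S \setminus C}_{D,A} \circ \phi[q]^{S \setminus A}_{C,B}$, while the $X$-summand on the right (with $k = |X|$) becomes $\phi[m-k+p+q]^{S \setminus (B \cap D)}_{(D \setminus B) \cup C,\, A \cup (B \setminus D)}$. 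Applying (B3$'$) to replace $\phi[n]$ by $(-1)^{n+1}\phi[1]$ for $n \geq 1$ and evaluating the alternating sum $\sum_{k=1}^{m} \binom{m}{k}(-1)^{m-k+p+q+1}$ via $\sum_{j=0}^{m}\binom{m}{j}(-1)^j = 0$ produces exactly the two equations of (B2$'$), with the $k = m$ term isolating the $\phi[0]$ contribution in the case $p = q = 0$ and every term collapsing to a multiple of $\phi[1]$ when $p + q > 0$.

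The converse (B2$'$) $\Rightarrow$ (B2) reverses these manipulations. Given a general configuration $(A,B,C,D)$ satisfying the hypothesis of (B2), set $p = |A \cap D|$ and $q = |B \cap C|$; naturality rewrites the left-hand side of (B2) as $\phi[p] \circ \phi[q]$ of a disjoint configuration on a smaller ambient set, to which (B2$'$) applies and yields a single multiple of $\phi[0]$ or $\phi[1]$. The right-hand side of (B2), after applying naturality and (B3$'$) to every summand, evaluates to the same expression by the same binomial identity run in reverse.

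The main obstacle is the set-theoretic bookkeeping: one must carefully verify that the intersections in the enlarged configuration are exactly $P$, $Q$, and $B \cap D$, that the ambient set drops correctly to $S \setminus (B \cap D)$ after applying naturality, and that the indices $(D \setminus B) \cup C$ and $A \cup (B \setminus D)$ emerge from the appropriate pieces of $\tilde D \cup \tilde C$ and $\tilde A \cup \tilde B$. The combinatorial core is then the identity $\sum_{j=0}^{m-1} \binom{m}{j}(-1)^j = (-1)^{m+1}$, which simultaneously yields the coefficient $1 + (-1)^m$ in the first equation of (B2$'$) and the coefficient $(-1)^{p+q+m}$ in the second.
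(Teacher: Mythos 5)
Your proposal is correct and follows essentially the same approach as the paper's proof: enlarge the configuration to $\tilde S = S \amalg P \amalg Q$ with $\tilde A = A \cup P$, $\tilde B = B \cup Q$, $\tilde C = C \cup Q$, $\tilde D = D \cup P$, apply (B2), reduce each term via the naturality identity $\phi^S_{P,Q} = \phi[\#(P \cap Q)]^{S\setminus(P\cap Q)}_{P\setminus Q, Q\setminus P}$, and then invoke (B3$'$) together with the alternating binomial sum to collapse the right side — with the $k = m$ term isolated when $p = q = 0$. The converse is the same computation run in reverse with $P = A \cap D$, $Q = B \cap C$, just as the paper indicates with ``the same reasoning.''
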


\begin{proof}
Suppose $\phi$ satisfies (B2). Let $A$, $B$, $C$, and $D$ be as above. Let $P$ and $Q$ be sets disjoint from each other and from $S$ of cardinalities $p$ and $q$. Put
\begin{displaymath}
S'=S \cup P \cup Q, \quad
A'=A \cup P, \quad
B'=B \cup Q, \quad
C'=C \cup Q, \quad
D'=D \cup P.
\end{displaymath}
Let $m=\# (B \cap D)$. Applying (B2) to the prime sets, we find
\begin{displaymath}
\phi[p]^{S \setminus C}_{D,A} \circ \phi[q]^{S \setminus A}_{C,B} = \sum_{k=1}^m \binom{m}{k} \phi[p+q+m-k]^{S \setminus (B \cap D)}_{(D \setminus B) \cup C, A \cup (B \setminus D)}.
\end{displaymath}
Now, if $p+q>0$ then by (B3$'$) $\phi[p+q+m-k]=(-1)^{p+q+m-k+1} \phi[1]$, and we obtain the second equation in (B2$'$). If $p+q=0$ then (B3$'$) only gives this identity for $0 \le k<m$, and so the final term in the above sum must be handled differently; this gives the first equation in (B2$'$). Thus (B2$'$) holds. The same reasoning yields the reverse implication.
\end{proof}

\begin{proof}[Proof of Proposition~\ref{prop:B-C-bijec}]
Let $\phi \in \sB_M$ be given, and put $\alpha=\phi[0]+\phi[1]$ and $\omega=-\phi[1]$. Condition (C1) follows immediately from (B1). We now examine condition (C2); use the notation from there. Translating (B2$'$) to this notation gives
\begin{align*}
(\alpha_1+\omega_1)(\alpha_2+\omega_2) &= (\alpha_3+\omega_3)-(1+(-1)^m) \omega_3, \\
(\alpha_1+\omega_1) \omega_2 &= (-1)^{m+1} \omega_3, \\
\omega_1 (\alpha_2+\omega_2) &= (-1)^{m+1} \omega_3, \\
\omega_1 \omega_2 &= (-1)^{m+1} \omega_3.
\end{align*}
Here the first equation is the first equation from (B2$'$), while the final three equations come from taking $(p,q)$ to be $(0,1)$, $(1,0)$, and $(1,1)$ in (B2$'$). One easily sees that the above equations yield those from (C2). Thus $(\alpha, \omega)$ belongs to $\sC_M$. The above reasoning is reversible.
\end{proof}

\subsection{The partition category} \label{ss:part}

Let $\fP=\fP(\delta)$ be the partition category with parameter $\delta$ (see \S \ref{ss:res-part}). Suppose that $M$ is a $\fP$-module. Then $M$ restricts to an $\FB$-module. Given a finite set $S$ and disjoint subsets $A$ and $B$ such that at least one is non-empty, let $\eta^S_{A,B} \colon S \setminus B \to S \setminus A$ be the morphism in $\fP$ in which $A \cup B$ forms a single block, and the remaining diagram is the identity. By convention, $\eta^S_{\emptyset,\emptyset}=\delta \cdot {\rm id}_S$. Let
\begin{displaymath}
\alpha^S_{A,B} \colon M(S \setminus B) \to M(S \setminus A)
\end{displaymath}
be the induced map. One easily sees that $\sigma$ is a simple symmetric operation on $M$. Using the rule for composition in $\fP$, we find that the following conditions hold:
\begin{itemize}
\item[(D1)] $\alpha$ commutes with itself in the sense of (C1).
\item[(D2)] We have $\alpha_1 \alpha_2=\alpha_3$ in the notation of (C2).
\item[(D3)] $\alpha^S_{\emptyset,\emptyset}=\delta$ for all finite sets $S$.
\item[(D4)] For distinct elements $i,j \in S$, we have $\alpha^S_{\{i\},\{j\}}=(\iota^S_{i,j})_*$.
\end{itemize}

\begin{proposition} \label{prop:part-op}
Let $M$ be an $\FB$-module equipped with a simple symmetric operation $\sigma$ satisfying the above conditions. Then $M$ carries a unique $\fP$-structure inducing $\sigma$.
\end{proposition}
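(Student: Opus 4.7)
The plan is to follow the template of Propositions~\ref{prop:brauer-op} and~\ref{prop:res-part-op}, combining the triangular structure $(\fU,\fD)$ on $\fP$ with Proposition~\ref{prop:tri-comp}. Relative to the restricted case, the single operation $\alpha$ now plays the role of both $\alpha$ and $\omega$ from Proposition~\ref{prop:res-part-op}, because in the full partition category an upward morphism is allowed to have blocks containing many source elements (provided at most one target element appears), and symmetrically for downward morphisms.

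First I would construct a $\fU$-structure on $M$. Every upward morphism in $\fP$ factors, up to an $\FB$-bijection, as a composition of generators of the form $\eta^{S'}_{A,B}$ with $|A|\le 1$, and I define the action of each such generator to be $\alpha^{S'}_{A,B}$. Well-definedness of the action under reordering of the factorization is a direct consequence of (D1): operations whose index sets are pairwise disjoint commute, so the order in which blocks are built up does not matter. Condition (D4) makes the construction compatible with the given $\FB$-structure in the degenerate case $|A|=|B|=1$. Dually I would construct a $\fD$-structure from generators $\eta^{S'}_{A,B}$ with $|B|\le 1$. Both structures restrict to the given $\FB$-action on $\fM$ by (D4).

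Next I would invoke Proposition~\ref{prop:tri-comp}, taking $\cU$ (resp.\ $\cD$) to be the class of morphisms $\eta^S_{A,B}$ with $|A|\le 1$ (resp.\ $|B|\le 1$); these plainly generate $\fU$ and $\fD$. It then suffices to verify that $(\phi,\psi)$ is compatible for all $\phi=\eta^S_{X,Y}\in\cU$ and $\psi=\eta^S_{X',Y'}\in\cD$ with $\psi\circ\phi$ defined, which forces $X=Y'$, so in particular both sets have size at most one.

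The verification is a case analysis comparing $\psi\circ\phi$, computed in $\fP$ by stacking the diagrams and tracking how blocks interact through the middle row, against $\psi_*\phi_* = \alpha^S_{X',Y'}\circ\alpha^S_{X,Y}$. When the target-block $Y$ of $\phi$ and the source-block $X'$ of $\psi$ are disjoint in the middle, the two blocks remain independent in the composition and compatibility reduces to the commutation identity (D1). When $Y\cap X'\ne\emptyset$, the blocks merge; the composed diagram is itself of type $\eta$, and the required identity is precisely the merging relation (D2) after matching $(X,Y,X',Y')$ to the roles $(A,B,C,D)$ from (C2). When the merged block is entirely in the middle row (it has no bottom or top element), a pure-middle loop is produced and the factor of $\delta$ is supplied by (D3). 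The main obstacle is organizing the bookkeeping so that each configuration of $(X,Y,X',Y')$ is matched to the exact form of (D2); once the set identifications are made correctly, the identities line up automatically.
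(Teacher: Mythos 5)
The approach is sound in outline and the identification of the generating classes $\cU$, $\cD$ matches the paper's. However, the parametrization of composable pairs in the compatibility check has a genuine gap.

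You write both generators as $\phi=\eta^S_{X,Y}$ and $\psi=\eta^S_{X',Y'}$ with a \emph{common} ambient set $S$ and conclude that composability forces $X=Y'$. This is indeed what a common ambient forces (target of $\phi$ is $S\setminus X$, source of $\psi$ is $S\setminus Y'$), but composable pairs in general do \emph{not} admit a common ambient. If $\phi$ has ambient $T_1$ and $\psi$ has ambient $T_2$, then composability only says $T_1\setminus X = T_2\setminus Y'$, i.e., both ambients restrict to the same middle set $W$; then $T_1 = W\amalg X$ and $T_2 = W\amalg Y'$, and these coincide only when $X$ and $Y'$ are the same set. Since $X$ lives in the source of $\phi$ and $Y'$ in the target of $\psi$, after any consistent labeling we have $X\cap Y'=\emptyset$, and more importantly $|X|$ and $|Y'|$ may differ: e.g.\ $|X|=0$, $|Y'|=1$ or vice versa. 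Concretely, take $\phi=\eta^{\{1,2,3\}}_{\{1\},\{2,3\}}\colon\{1\}\to\{2,3\}$ (block $\{1,2,3\}$, one source element) and $\psi=\eta^{\{2,3,4\}}_{\{2,3\},\{4\}}\colon\{2,3\}\to\{4\}$ (block $\{2,3,4\}$, one target element). These are in $\cU$ and $\cD$ respectively, $\psi\circ\phi$ is defined, but there is no way to write them with the same ambient, and $X=\{1\}\ne\{4\}=Y'$. Your case analysis never reaches such pairs.

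The fix is exactly what the paper does: work with an enlarged ambient $S$, set $\phi=\eta^{S\setminus A}_{C,B}$ and $\psi=\eta^{S\setminus C}_{D,A}$ where $A,B,C,D\subseteq S$ with $|C|\le 1$, $|A|\le 1$, all pairs disjoint except possibly $B$ and $D$. Here $C$ and $A$ are independent singletons-or-empties, not forced to coincide; both the $B\cap D=\emptyset$ case (handled by (D1)) and the $B\cap D\ne\emptyset$ case (handled by (D2), with (D3) when the merged block vanishes) must be verified with this freedom. A related slip: in the merging step you propose matching $(X,Y,X',Y')$ to $(A,B,C,D)$ of (C2), but the correct correspondence with the paper's setup is $X\leftrightarrow C$, $Y\leftrightarrow B$, $X'\leftrightarrow D$, $Y'\leftrightarrow A$; with your constraint $X=Y'$, this becomes $C=A$, which combined with disjointness forces $A=C=\emptyset$, so you are only ever in the purely cup--cap case and miss the four cases $(|C|,|A|)\in\{(0,0),(0,1),(1,0),(1,1)\}$ that the paper's proof handles uniformly.
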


\begin{proof}
First, we use $\alpha$ to construct $\fU$ and $\fD$-structures on $M$. The operation $\alpha$ gives, by restriction, a simple $(1,*)$-operation and a simple $(0,*)$-operation. We can use these to define an action of the upwards restricted partition category $\fU^r$ on $M$ as in the proof of Proposition~\ref{prop:res-part-op}, and we note that $\fU^r=\fU$. Similarly, we get, by restriction, a simple $(*,1)$-operation and a simple $(*,0)$-operation which gives a $\fD$-structure on $M$. (D4) tells us that the restriction of the $\fD$ and $\fU$ structures to $\FB$ agree with the usual $\FB$-action, so in particular they agree with each other.

Let $\cU$ be the class of morphisms in $\fU$ isomorphic to $\eta^S_{A,B}$ for some $S$, $A$, and $B$ with $|A| \le 1$, and define $\cD$ similarly using $\eta^S_{A,B}$ with $|B| \le 1$. One easily sees that $\cU$ generates $\fU$ and $\cD$ generates $\fD$. Thus, by Proposition~\ref{prop:tri-comp}, it suffices to show that $(\phi, \psi)$ is compatible for $\phi \in \cU$ and $\psi \in \cD$ with $\psi \circ \phi$ defined.

Let $A,B,C,D$ be subsets of $S$ such that $|C|\le 1$ and $|A|\le 1$ and all pairs of subsets are disjoint, except possibly $B$ and $D$. We set $\phi = \eta^{S\setminus A}_{C,B}$ and $\psi = \eta^{S \setminus C}_{D,A}$. If $B \cap D = \emptyset$, then compatibility follows from (D1), i.e., the condition that $\alpha$ is self-commuting. Otherwise, suppose that $B \cap D \ne \emptyset$. Then compatibility follows from (D2) if at least one of $C \cup (D \setminus B)$ and $A \cup (B\setminus D)$ is non-empty. If both are empty, then we also need to use (D3).
\end{proof}

Suppose that $\cC$ and $\cD$ are two $\bk$-linear categories whose objects are finite sets and which contain all bijections. We define a new $\bk$-linear category $\cC \star \cD$ whose objects are finite sets as follows. The Hom spaces are defined by
\begin{displaymath}
\Hom_{\cC \star \cD}(S,T)=\bigoplus_{\substack{S=S_1 \sqcup S_2 \\ T= T_1 \sqcup T_2}} \Hom_{\cC}(S_1, T_1) \otimes \Hom_{\cD}(S_2, T_2).
\end{displaymath}
Suppose that $S \to T$ and $T \to U$ are morphisms in $\cC \star \cD$ corresponding to decompositions $S=S_1 \sqcup S_2$ and $T=T_1 \sqcup T_2$, and $T=T_1' \sqcup T_2'$ and $U=U_1 \sqcup U_2$. If $T_1=T_1'$ and $T_2=T_2'$ the composition is defined in the obvious manner, using the composition laws in $\cC$ and $\cD$; otherwise, the composition is defined to be 0. We have a functor $\FB \to \cC \star \cD$ that is the identity on objects and takes a bijection $\phi \colon S \to T$ to
\begin{displaymath}
\sum_{S=S_1 \sqcup S_2} (\phi \colon S_1 \to \phi(S_1)) \otimes (\phi \colon S_2 \to \phi(S_2)).
\end{displaymath}
In particular, the identity morphism of $S$ in $\cC \star \cD$ is $\sum_{S=A \sqcup B} \id_{\cC,A} \otimes \id_{\cD,B}$. There is no natural functor $\cC \to \cC \star \cD$ (the obvious attempt does not preserve identity morphisms), but there is a natural functor $\cC \star \cD \to \cC$ which kills all morphisms in $\cD$. Similarly, there is a natural functor $\cC \star \cD \to \cD$ which kills all morphisms in $\cC$. We will apply this construction with $\cC=\fP(\delta)$ and $\cD=\fP(\epsilon)$ below.

\subsection{The comparison theorem}

Let $M$ be an $\ul{\fa}(\bV \oplus \bV^*)$-module. We say that $M$ is \defi{$\delta$-standard} if its restriction to $\ul{\fgl}(V)$ (see Proposition~\ref{prop:weyl-to-witt}) is $\delta$-standard. We say that $M$ has \defi{central character} $\chi \in \bk$ if the composition
\begin{displaymath}
M \to \Sym(V) \otimes M \to \Div(V) \otimes M \to M
\end{displaymath}
is $\chi$ times the identity, where the first map is the natural isomorphism $M \to \Sym^0(V) \otimes M$ followed by the inclusion into $\Sym(V) \otimes M$ while the second map is the projection $\Div(V) \otimes M \to \Div^0(V) \otimes M$ followed by the natural isomorphism with $M$.

We let $\Rep^{\chi}_{\delta}(\ul{\fa}(\bV \oplus \bV^*))$ be the full subcategory of $\ul{\fa}(\bV \oplus \bV^*)$ spanned by $\delta$-standard modules with central character $\chi$.

\begin{theorem}
We have an equivalence of categories
\begin{displaymath}
\Rep_{\epsilon}^{\delta-\epsilon}(\ul{\fa}(\bV \oplus \bV^*)) = \Rep(\fP(\delta) \star \fP(\epsilon)).
\end{displaymath}
\end{theorem}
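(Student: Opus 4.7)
The plan is to mirror the proofs of Theorems~\ref{thm:fB=B} and~\ref{thm:witt}: present both sides as $\FB$-modules equipped with prescribed operations and match the two operational descriptions.

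The $\ul{\fa}(\bV \oplus \bV^*)$-side is already in this form by Proposition~\ref{prop:B-C-bijec}: representations correspond to $\FB$-modules $M$ carrying simple symmetric operations $(\alpha, \omega)$ subject to (C1) and (C2), with the underlying symmetric operation $\phi$ recovered as $\phi[0] = \alpha + \omega$ and $\phi[n] = (-1)^n \omega$ for $n \ge 1$. Translating the $\epsilon$-standard condition via Proposition~\ref{prop:weyl-to-witt} and the formula for the induced $\ul{\fgl}(\bV)$-operations from Proposition~\ref{prop:gl-fb} gives $\omega^S_{\emptyset,\emptyset} = -\epsilon$ and $\alpha^S_{\{i\},\{j\}} + \omega^S_{\{i\},\{j\}} = (\iota^S_{i,j})_*$ for distinct $i,j$; the central character condition then pins down $\alpha^S_{\emptyset,\emptyset} = \delta$.

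First I would prove a $\star$-analog of Proposition~\ref{prop:part-op}: a $\fP(\delta) \star \fP(\epsilon)$-module structure on $M$ is equivalent to a pair of simple symmetric operations $(\sigma_\delta, \sigma_\epsilon)$ such that $\sigma_\delta$ satisfies (D1)--(D4) with parameter $\delta$, $\sigma_\epsilon$ satisfies them with parameter $\epsilon$, and (forced by the tensor-product structure of $\cC \star \cD$) the two operations commute with one another while any attempted merge between a $\sigma_\delta$-block and a $\sigma_\epsilon$-block vanishes. The triangular structure on $\fP(\delta) \star \fP(\epsilon)$ is assembled from the triangular structures on the two factors, and Proposition~\ref{prop:tri-comp} applied to color-split generators reduces the verification to a finite case analysis, essentially the argument of Proposition~\ref{prop:part-op} carried out separately for each color.

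Next I would match the two descriptions by identifying $\sigma_\delta$ with $\alpha$ and $\sigma_\epsilon$ with an appropriately sign-normalized version of $\omega$. Under this identification, $\alpha_1\alpha_2 = \alpha_3$ in (C2) becomes $\fP(\delta)$-composition; $\omega_1\omega_2 = (-1)^{m+1}\omega_3$ becomes $\fP(\epsilon)$-composition after the sign twist; the mixed vanishings $\alpha_1\omega_2 = \omega_1\alpha_2 = 0$ become the orthogonality of different colors; and (C1) supplies the required self-commutations and mutual commutation. The scalar and singleton bijection identities coming from the standard and central-character constraints exactly match (D3) and (D4) for each of $\sigma_\delta$ and $\sigma_\epsilon$. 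Functoriality on both sides is transparent: morphisms are $\FB$-module maps intertwining the operations.

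The main obstacle will be pinning down the correct sign relating $\omega$ to $\sigma_\epsilon$. Because the sign $(-1)^{m+1}$ depends on the overlap size $m$, no uniform scalar rescaling suffices, and the correct identification must incorporate overlap-by-overlap signs. These signs ultimately originate from the averaging map $\avg \colon \Sym^n \to \Div^n$ in the construction of $a'$ preceding Proposition~\ref{prop:part-curry}, and verifying that the combinatorics on both sides match precisely is the technical heart of the proof.
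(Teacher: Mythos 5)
Your plan follows the paper's proof in its essential structure: both sides are recast as $\FB$-modules carrying a pair of simple symmetric operations, with Proposition~\ref{prop:B-C-bijec} supplying the $(\alpha,\omega)$-description on the curried side and the $\fP(\delta)\star\fP(\epsilon)$-side handled block by block. Your translations of the $\epsilon$-standard and central-character constraints to $\omega^S_{\emptyset,\emptyset}=-\epsilon$, $\alpha^S_{\emptyset,\emptyset}=\delta$, and $\alpha^S_{\{i\},\{j\}}+\omega^S_{\{i\},\{j\}}=(\iota^S_{i,j})_*$ are correct, and your proposal to prove an explicit $\star$-analog of Proposition~\ref{prop:part-op} as a separate lemma is actually more careful than the paper's write-up, which constructs $(\alpha,\omega)$ from a $\fP(\delta)\star\fP(\epsilon)$-module and asserts that (C1)--(C2) follow ``from the above discussion.'' You also correctly state the mixed vanishings as $\alpha_1\omega_2=\omega_1\alpha_2=0$; the displayed form of (C2) in the paper has a typo, and your version is the one that the proof of Proposition~\ref{prop:B-C-bijec} actually yields.

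Two points need repair. First, the sign. You rightly observe that a constant rescaling of $\omega$ cannot reproduce plain $\fP(\epsilon)$-composition, but then conclude that the identification ``must incorporate overlap-by-overlap signs.'' It does not. The correct normalization is $\omega^S_{A,B}=(-1)^{|A|+1}\,\tilde{\omega}^S_{A,B}$, where $\tilde{\omega}$ denotes the unsigned $\fP(\epsilon)$-morphism merging $A\cup B$ into a single block; the sign depends only on the output-side cardinality $|A|$, not on any overlap. In the setting of (C2) this gives $\omega_1\omega_2=(-1)^{|C|+|D|}\tilde{\omega}_3$ while $(-1)^{m+1}\omega_3=(-1)^{m+|C|+|D\setminus B|}\tilde{\omega}_3$, and these agree because $|D|=|D\setminus B|+m$. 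The ``technical heart'' you flag is thus a one-line arithmetic identity once the right candidate sign is written down. Second, your $\star$-analog of Proposition~\ref{prop:part-op} requires (D4) separately for each of $\sigma_\delta$ and $\sigma_\epsilon$, which cannot be correct: the restricted $\FB$-action on a $\fP(\delta)\star\fP(\epsilon)$-module splits the bijection $\iota^S_{i,j}$ as a sum over the two colors, yielding only the joint identity $\sigma_\delta{}^S_{\{i\},\{j\}}+\sigma_\epsilon{}^S_{\{i\},\{j\}}=(\iota^S_{i,j})_*$---exactly what you need to match the curried-side constraint $\alpha^S_{\{i\},\{j\}}+\omega^S_{\{i\},\{j\}}=(\iota^S_{i,j})_*$ you derived. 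Requiring (D4) for each color separately would give $2(\iota^S_{i,j})_*$ and be inconsistent with your own earlier translation.
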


\begin{proof}
  Let $M$ be a representation of $\fP(\delta) \star \fP(\epsilon)$. Then $M$ restricts to an $\FB$-module via the functor $\FB \to \fP(\delta) \star \fP(\epsilon)$ defined above. Let $S$ be a finite set with disjoint subsets $A$ and $B$. If $A \cup B \ne \emptyset$, we define
  \[
    \alpha^S_{A,B} \colon M(S \setminus B) \to M(S \setminus A)
  \]
  using the morphism in $\fP(\delta) \star \fP(\epsilon)$ that is the identity on $S \setminus (A \cup B)$ and the morphism $A \to B$ in $\fP(\delta)$ given by a single block. We define $\alpha^S_{\emptyset, \emptyset}$ to be $\delta$ times the identity on $M(S)$. Similarly, if $A \cup B \ne \emptyset$, we define
  \[
    \omega^S_{A,B} \colon M(S \setminus B) \to M(S\setminus A)
  \]
  to be $(-1)^{|A|+1}$ times the morphism which is the identity on $S \setminus (A\cup B)$ and the morphism $A \to B$ in $\fP(\epsilon)$, and we define $\alpha^S_{\emptyset, \emptyset}$ to be $-\epsilon$ times the identity on $M(S)$. It follows from the above discussion that $\alpha$ and $\omega$ satisfy conditions (C1) and (C2) and thus define a representation of $\ul{\fa}(V \oplus V^*)$ on $M$.

Let $\phi \in \sB_M$ be the operation associated to $(\alpha,\omega)$. Thus $\phi[0]=\alpha+\omega$ and $\phi[1]=-\omega$. We have
\begin{displaymath}
\phi^S_{\emptyset,\emptyset}=\alpha^S_{\emptyset,\emptyset}+\omega^S_{\emptyset,\emptyset}=\delta - \epsilon
\end{displaymath}
and so we see that $M$ has central character $\delta - \epsilon$. Let $i,j \in S$. For $i=j$, we have
\begin{displaymath}
\phi^S_{i,i} = \phi[1]^{S \setminus x}_{\emptyset,\emptyset} = \epsilon.
\end{displaymath}
For $i \ne j$, we have
\begin{displaymath}
\phi^S_{i,j} = \alpha^S_{i,j}+\omega^S_{i,j} = (\iota^S_{i,j})_*,
\end{displaymath}
where $\iota^S_{i,j} \colon S \setminus j \to S \setminus i$ is the natural bijection; the second equality follows from the definition of the $\FB$-structure on $M$. We thus see that the $\ul{\fgl}(V)$-module structure on $M$ is given by
\begin{displaymath}
t^i \otimes x \mapsto \epsilon x + \sum_{j \ne i} t^j \otimes (\iota_{i,j}^S)_*(x)
\end{displaymath}
where $x \in M(S \setminus i)$. Hence the action is $\epsilon$-standard.
\end{proof}

Let $\Rep^{\delta}(\ul{\fa}(\bV \oplus \bV^*))'$ be the full subcategory of $\Rep(\ul{\fa}(\bV \oplus \bV^*))$ spanned by representations that have central character $\delta$, are 0-standard, and for which the $\omega$ operations vanish.

\begin{corollary}
We have a natural isomorphism of categories
\begin{displaymath}
\Rep^{\delta}(\ul{\fa}(\bV \oplus \bV^*))' \cong \Rep(\fP(\delta)).
\end{displaymath}
\end{corollary}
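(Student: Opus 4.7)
The plan is to deduce the corollary directly from the preceding theorem by specializing $\epsilon = 0$ and then cutting down to the subcategories picked out by the additional hypothesis that $\omega$ vanishes. Applying the theorem with $\epsilon = 0$ gives the equivalence
\[
\Rep_{0}^{\delta}(\ul{\fa}(\bV \oplus \bV^*)) \cong \Rep(\fP(\delta) \star \fP(0)),
\]
and the $0$-standard condition together with central character $\delta - 0 = \delta$ already accounts for two of the three conditions defining $\Rep^{\delta}(\ul{\fa}(\bV \oplus \bV^*))'$. So the only remaining task is to translate the condition $\omega \equiv 0$ across the equivalence.

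On the diagrammatic side, I would invoke the discussion in \S\ref{ss:part}: there is a natural quotient functor $\pi \colon \fP(\delta) \star \fP(0) \to \fP(\delta)$ that kills every non-identity morphism coming from the $\fP(0)$ factor. Pullback along $\pi$ identifies $\Rep(\fP(\delta))$ with the full subcategory of $\Rep(\fP(\delta) \star \fP(0))$ consisting of those modules on which every morphism of $\fP(0)$ that is not already a bijection (i.e., not coming from $\FB$) acts by zero. This is a purely categorical statement about pullback along a quotient of $\bk$-linear categories.

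On the curried side, I would unwind the construction in the proof of the theorem. With $\epsilon = 0$, the operation $\omega^S_{A,B}$ for $A \cup B \neq \emptyset$ is defined (up to a sign) as the action of the single-block partition morphism $A \to B$ in $\fP(0)$, while $\omega^S_{\emptyset,\emptyset} = -\epsilon \cdot \id$ automatically vanishes. Consequently the hypothesis that all $\omega^S_{A,B}$ vanish corresponds, under the equivalence of the theorem, to the hypothesis that all non-$\FB$ morphisms in $\fP(0)$ act by zero. Combined with the preceding paragraph, this produces the desired isomorphism $\Rep^{\delta}(\ul{\fa}(\bV \oplus \bV^*))' \cong \Rep(\fP(\delta))$.

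There is essentially no substantive obstacle, since the theorem has already done the real work of matching the curried relations with composition in $\fP(\delta) \star \fP(\epsilon)$. The only point requiring a line of care is that the sign $(-1)^{|A|+1}$ appearing in the definition of $\omega$ is irrelevant for the vanishing statement, so the identification between ``$\omega = 0$'' and ``$\fP(0)$ acts trivially'' goes through unchanged.
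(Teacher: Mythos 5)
The paper states this corollary without proof, so you are filling an implicit gap rather than reproducing an existing argument. Your approach is the intended one: specialize the theorem to $\epsilon=0$ to get $\Rep_0^{\delta}(\ul\fa(\bV\oplus\bV^*)) \cong \Rep(\fP(\delta)\star\fP(0))$, and then observe that the remaining hypothesis $\omega\equiv 0$ matches, under this equivalence, the condition that the two-sided ideal of $\fP(\delta)\star\fP(0)$ killed by the quotient functor $\fP(\delta)\star\fP(0)\to\fP(\delta)$ acts by zero. Pullback along that quotient then lands you in $\Rep(\fP(\delta))$. This parallels exactly the treatment of $\FA^{\op}$ in \S\ref{ss:fa}, where the $\zeta$ morphisms form an ideal of $\fG^r(0)$ and $\Mod_{\FA^{\op}}$ is identified with the modules on which they act trivially.

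One step you slide past is the direction ``$\omega\equiv 0$ implies the whole ideal acts by zero.'' The operation $\omega^S_{A,B}$ (for $A\cup B\neq\emptyset$) is, up to the harmless sign, the action of a morphism in $\fP(\delta)\star\fP(0)$ whose $\fP(\delta)$-part is an identity and whose $\fP(0)$-part is a single block $A\to B$. You need to know that these single-block morphisms, together with $\FB$, generate the $\fP(0)$-component as a two-sided ideal, so that their vanishing forces every $(f,g)$ with nontrivial $\fP(0)$-part to act by zero. This follows from the factorization of partition diagrams into single-block morphisms already used in Proposition~\ref{prop:part-op}, and also uses that $\epsilon=0$ to guarantee the kernel of the quotient really is an ideal (a closed loop in $\fP(0)$ contributes $0$). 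You might add one sentence to this effect, but the substance of your proof is right.
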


\section{Abstract curried algebras} \label{s:abs}

\subsection{The definition}

We have defined the notion of representation for several curried algebras. However, we have not given a general definition of curried algebra. We now briefly (and informally) give such a definition. It would be interesting to explore this idea in more detail.

Let $\cC$ be a symmetric monoidal $\bk$-linear category. We assume that the monoidal structure $\otimes$ is $\bk$-bilinear. Let $M$ be an object of $\cC$. Given two other objects $V$ and $W$ of $\cC$, a \defi{$(V,W)$-operation} on $M$ is a map
\begin{displaymath}
a \colon V \otimes M \to W \otimes M.
\end{displaymath}
Intuitively, giving a curried algebra should amount to giving some $(V,W)$-operations (for various $V$ and $W$) satisfying some relations. ``Relations'' will mean that certain operations built out of these operations vanish. Given a $(V,W)$-operation $a$, there are four ways to build new operations:
\begin{enumerate}
\item Tensor with an arbitrary object $X$ to obtain an $(X \otimes V, X \otimes W)$-operation:
\begin{displaymath}
\xymatrix@C=4em{
X \otimes V \otimes M \ar[r]^-{\id \otimes a} & X \otimes W \otimes M. }
\end{displaymath}
\item Pre-compose with a morphism $f \colon V' \to V$ and post-compose with a morphism $g \colon W \to W'$ to obtain a $(V',W')$-operation:
\begin{displaymath}
\xymatrix@C=4em{
V' \otimes M \ar[r]^{f \otimes \id} & V \otimes M \ar[r]^-a & W \otimes M \ar[r]^{g \otimes \id} & W' \otimes M. }
\end{displaymath}
\item Given a $(U,V)$-operation $b$, compose to obtain a $(U,W)$-operation:
\begin{displaymath}
\xymatrix@C=4em{
U \otimes M \ar[r]^b & V \otimes M \ar[r]^a & W \otimes M.}
\end{displaymath}
\item Given a finite collection of $(V,W)$-operations $\{a_i\}$, any $\bk$-linear combination $\sum_i \lambda_i a_i$ is also a $(V,W)$-operation.
\end{enumerate}
This suggests the following definition:

\begin{definition}
A \defi{curried algebra} $A$ in $\cC$ consists of the following primary data:
\begin{itemize}
\item For each pair of objects $(V,W)$ in $\cC$, a $\bk$-vector space $A(V,W)$. This is called the space of $(V,W)$-operations in $A$. For $V=W$, there is a distinguished ``identity operation'' in $A(V,V)$.
\end{itemize}
Additionally, we require the following operations on $A$:
\begin{enumerate}
\item For objects $V$, $W$, and $X$, a $\bk$-linear map $A(V, W) \to A(X \otimes V, X \otimes W)$.
\item For morphisms $V' \to V$ and $W \to W'$, a $\bk$-linear map $A(V,W) \to A(V',W')$.
\item For objects $U$, $V$, and $W$, A $\bk$-linear map $A(U,V) \otimes A(V,W) \to A(U,W)$.
\end{enumerate}
A number of conditions should hold (that we do not specify).
\end{definition}

\begin{remark} \label{rmk:curcat}
Let $A$ be a curried algebra in $\cC$. One can then define a $\bk$-linear category $\cD$ with the same objects as $\cC$ and with $\Hom_{\cD}(V,W)=A(V,W)$. Composition is given by the operation in (c). The operations in (a) and (b) define an action of the monoidal category $\cC$ on $\cD$, i.e., a functor $\cC \times \cD \to \cD$ satisfying certain conditions. In fact, giving $A$ is equivalent to giving $\cD$ (together with this action), and so one can view curried algebras as certain kinds of categories.
\end{remark}

\subsection{Constructions}

Assuming $\cC$ satisfies some mild conditions, there are two general constructions of curried algebras:
\begin{itemize}
\item Given a collection $\{(V_i,W_i)\}_{i \in I}$ of pairs of objects in $\cC$, there is a free curried algebra containing a distinguished $(V_i,W_i)$-operation for each $i \in I$.
\item Given a curried algebra $A$ and a collection of elements $\{x_i \in A(V_i,W_i)\}_{i \in I}$, there is a quotient curried algebra $B$ in which each $x_i$ maps to~0 (and which is universal subject to this).
\end{itemize}
These two constructions allow one to build curried algebras by generators and relations. For instance, one can build the curried algebra $\ul{\fgl}(V)$ by taking the free curried algebra on a single $(V,V)$-operation $a$ and quotienting by the $(V \otimes V, V \otimes V)$-operation $[a_1,a_2]-\tau(a_1-a_2)$.

There is one additional important construction of curried algebras. Let $M$ be an object of $\cC$. We define the \defi{endomorphism curried algebra} of $M$, denoted $E_M$, by
\begin{displaymath}
E_M(V,W) = \Hom_{\cC}(V \otimes M, W \otimes M).
\end{displaymath}
If $A$ is an arbitrary curried algebra, then a \defi{representation} of $A$ on $M$ is a homomorphism of curried algebras $A \to M$. The representations of various curried algebras that we have discussed above all fit into this general framework.

\subsection{Diagram categories}

We have seen several examples of diagram categories in this paper, such as the Brauer category and the partition category. We now propose a precise definition of ``diagram category.''

To motivate the definition, suppose that $\fG$ is one of the familiar $\bk$-linear diagram categories, such as the Brauer category. In particular, the objects of $\fG$ are finite sets. If $T$ is a finite set, then there is a functor $\fG \to \fG$ given on objects by $S \mapsto S \amalg T$. On morphisms, this functor corresponds to adding vertices indexed by $T$ to the source and target, and connecting these vertices by lines. In other words, we see that $\fG$ admits an action by the monoidal category $\FB$. We now give our definition:

\begin{definition}
A \defi{diagram category} is a $\bk$-linear category whose objects are finite sets equipped with a $\bk$-linear action by the monoidal category $\FB$ that lifts disjoint union.
\end{definition}

Note that a diagram category in the above sense is just a curried algebra in $\FB$, from the point of view of Remark~\ref{rmk:curcat} (with the caveat that $\FB$ is not a $\bk$-linear category). The point of this paper can now be rephrased as follows: for many diagrams categories $\fG$, we associated a curried algebra $A$ in $\Mod_{\FB}$ such that $\fG$-modules and $A$-modules coincide.

\addtocontents{toc}{\vskip 6pt}

\end{document}